\numberwithin{equation}{section} \allowdisplaybreaks
   \renewcommand\@cite[1]{#1\hspace{0.2em}}
\newtheorem{theorem}{\color{black}\indent Theorem}[section]
\newtheorem{lemma}{\color{black}\indent Lemma}[section]
\newtheorem{definition}{\color{black}\indent Definition}[section]
\newtheorem{remark}{\color{black}\indent Remark}[section]
\newtheorem{corollary}{\color{black}\indent Corollary}[section]
\newtheorem{example}{\color{black}\indent Example}[section]
\DeclareMathOperator{\dive}{div}
\begin{document}
	\title{\LARGE\bf  The dependence of local regularity of solutions \\on the summability of coefficients and\\ nonhomogenous term
		\thanks{
			The research is supported by NSFC (11301211) and NSF of Jilin Prov. (201500520056JH).}\\
		\author{Zheng Li, ~Bin Guo~\thanks{Corresponding author\newline \hspace*{6mm}{\it Email
address: }bguo@jlu.edu.cn (Bin Guo)}
\\
School of Mathematics, Jilin University, Changchun 130012, PR
China}
	}
	\date{} \maketitle

{\bf Abstract:}
In this paper,  we mainly discuss the local regularity of the solution to the following problem
\begin{align*}
\begin{cases}
-\dive({\bf{A}}(x)\nabla u(x))=f(x),&~x\in\Omega,\\
u(x)=0,&~x\in\partial\Omega,
\end{cases}
\end{align*}
where $\Omega$ is a bounded domain in $\mathbb{R}^{n}$. In particular, we are concerned with the connection between the regularity of the solution $u$  and the integrability of the coefficient matrix ${\bf{A}}(x)$ as well as the nonhomogeneous term $f$. To be more precise, our first result is to prove that the maximum norm of $u$ can be controlled by $\|f\|_{s}$ with  $f\in L^s(\Omega),~s>\frac{nq}{2q-n},~q>\frac{n}{2}$.
Meanwhile, we construct some counterexamples to illustrate  the index $\frac{nq}{2q-n}$ being sharp. 
Subsequently, we give an improved upper bound for the maximum norm of $u$. Namely, there exists a positive constant $C$ such that
$$\|u\|_{\infty}\leq C\|f\|_{\frac{nq}{2q-n}}\left[\log\left(\frac{\|f\|_{s}}{~~~~~\|f\|_{\frac{nq}{2q-n}}}+1\right)+1\right].$$ Specially, the main difference of our approach compared to the arguments of [\ref{CUR}, \ref{XU}]  is to construct two classes of truncation functions  to remove the assumption of the boundedness of $u$.  
Finally, based on the previous results and Moser iteration argument, we derive the Harnack inequality of $u$ from which the H\"older continuity of the solution follows. In addition, 
we also find that the Lebesgue space $L^{\frac{n}{2}}(\Omega)$  to which  the inverse of the smallest eigenvalue $\lambda(x) $ of the matrix {\bf{A}}(x) belongs is essentially sharp 
in order to establish local boundedness and the  H\"older continuity of the solution.

\textbf{{Keywords:}}  Nonuniformly elliptic equations; Bounded solutions; Harnack inequality
\section{Introduction}
In this article, we consider the following second order  elliptic equation in divergence form
\begin{equation}\label{Equ01}
-\dive ({\bf{A}}(x)\nabla u) = f,~~x\in\Omega,
\end{equation}
where $\Omega$ is a bounded domain in $\mathbb{R}^n$, $n\geq2$.
In order to measure ellipticity of ${\bf{A}}(x)$, the coefficient matrix ${\bf{A}}(x)$ satisfies the following assumptions:

({\rm H1}) ${\bf{A}}(x) :~\Omega\rightarrow \mathbb{R}^{n\times n}$ is a measurable matrix whose entries $a_{ij}(x)$ satisfy
$$a_{ij}(x)=a_{ji}(x),~~~ 1\leq i,j\leq n;$$

({\rm H2}) There exist $\lambda(x)$ and $\mu(x)$   such that
$$\lambda(x)|\xi|^2\leq {\bf{A}}(x)\cdot \xi\cdot\xi\leq \mu(x)|\xi|^2,~~~ \forall \xi\in \mathbb{R}^n,$$
where $\lambda(x)$ and $\mu(x)$ are nonnegative measurable functions and satisfy
$$\lambda^{-1}(x)\in L^q(\Omega),~~\mu(x)\in L^p(\Omega),$$
with $p,~q>1$ and $\frac{1}{p}+\frac{1}{q}<\frac{2}{n-1}$.

%
When the equation \eqref{Equ01} is uniformly and strictly elliptic, i.e., $\lambda^{-1}(x)$ and $\mu(x)$ are essentially bounded,  it is well known that weak solutions are H\"{o}lder continuous. Roughly speaking,
  for the planar case ($n=2$), the corresponding study dates back to the work of Morrey [\ref{MCB}].
For more works on the best H\"older continuity exponent, the interested readers may refer to the work of Wildman  [\ref{WK}] and  Piccinini and Spagnolo [\ref{PS}]. 
For the higher dimensions ($n\geq 3$), H\"older continuity of solutions was established in the late 1950's by De Giorgi [\ref{DG}] and Nash [\ref{NJ}].  
H\"older continuity also follows from the Harnack inequality, as demonstrated by Moser [\ref{MJ}, \ref{MJ1}]. 
In particular, Trudinger [\ref{TRU2}] obtained that 
for any $f\in L^{s}(\Omega)$ with $s>\frac{n}{2}$, there exists  a constant $C>0$ such that 
\begin{align}\label{unie}
\|u\|_{{\infty}}\leqslant C\|f\|_{{s}},
\end{align}
for any nonnegative weak solution $u\in H^{1}_0(\Omega)$ of \eqref{Equ01}.
This proof relies on Moser iteration and the Sobolev inequality.
Different from the method used in  [\ref{TRU2}], Talenti [\ref{TG}] applied the rearrangement method to obtain the same results and gave the explicit expression of the constant ``$C$''. After these pioneer works, there are many research activities regarding global regularity theory for linear or nonlinear elliptic equations [\ref{GT}, \ref{LU}, \ref{MA1}, \ref{MA2}, \ref{SG}, \ref{TRU2}] and references therein.

On the contrary, if $\lambda^{-1}(x)$ is unbounded, then the equation \eqref{Equ01} is degenerate;
If $\mu(x)$ is unbounded, then  the equation \eqref{Equ01} is singular. 
Specially, Murthy and Stampacchia [\ref{MS}], Trudinger [\ref{TRU1}] proved that weak solutions to \eqref{Equ01} are locally bounded and satisfy the Harnack inequality under the assumptions that $\lambda^{-1}(x)\in L^{q}(\Omega)$ and $\mu(x)\in L^{p}(\Omega)$ with $\frac{1}{p}+\frac{1}{q}<\frac{2}{n}$. It's worth noting that their key point is to obtain the following Sobolev inequality
\begin{align}
(\int_{\Omega}|u|^{\gamma}\mu(x){\rm d}x)^{\frac{1}{\gamma}}\leqslant C\big(\int_{\Omega}|\nabla u|^{2}\lambda(x){\rm d}x\big)^{\frac{1}{2}},
\end{align}
for any $u\in H_{0}^{1}(\Omega)$ and $\gamma>2$.
Furthermore, Bella and Sch\"{a}ffner [\ref{BPMS3}] also obtained the same results under the optimal condition $\frac{1}{p} +\frac{1}{q}<\frac{2}{n-1}, n\geqslant2 $
(refer to [\ref{BPMS2}, \ref{CGMM}, \ref{HS}] for a recent generalization to  scalar autonomous integral functionals with $\left(p,q\right)$-growth). Compared to the arguments given in [\ref{MS}, \ref{TRU1}], the key observation of 
[\ref{BPMS3}] is to give the optimal Caccioppoli inequality with respect to cut-off function by using the Sobolev inequality in the $n-1$-dimensional spheres rather than  the $n$-dimensional balls.  
In addition, Fabes, Kenig and Serapioni [\ref{FKS}] gave some stronger conditions on the minimum and maximum eigenvalues of $\textbf{A}(x)$, i.e., Muckenhoupt class $A_{2}$.
These conditions will guarantee the validity of the following scale invariant Sobolev  and Poincar\'{e} inequalities like
\begin{align}\label{Ine102}
\big(\int_{B}|u-u_{B}|^{\gamma}\lambda {\rm d}x\big)^{\frac{2}{\gamma}}\leqslant Cr^{2}\big(\int_{B}\lambda^{-1} {\rm d}x\big)^{\frac{2}{\gamma}-1}\int_{B}|\nabla u|^{2}\lambda {\rm d}x,
\end{align}
for any $u\in H^{1}(B)$ with $B=B(y,r)$ and $\gamma>2$, $u_B:=\fint_{B} u {\rm d}x=\frac{1}{|B|}\int_{B} u {\rm d}x$. For more related results, we may refer to [\ref{CSWR}, \ref{CUR}, \ref{FBG}].

Recently, for the Dirichlet problem of \eqref{Equ01}, Xu [\ref{XU}] improved \eqref{unie} in the uniformly elliptic case.
Using Moser iteration technique, he obtained an upper bound in the logarithmic form of $\|f\|_{s}$. Subsequently, Cruz-Uribe and Rodney [\ref{CUR}] extended Xu's result to the degenerate elliptic equations with $f\in L^A(\Omega)$, where $A(t)=t^{\sigma^\prime}\log\left(e+t\right)^q,~\left(q>\sigma^\prime>0\right)$.

Motivated by mentioned works, we plan to investigate the relationship between the local regularity of solutions and the integrability of the minimum eigenvalue and the maximum eigenvalue of the coefficient matrix {\bf{A}}(x) as well as $f$. To the best of our knowledge, 
it is widely recognized that the smoothness of the solutions depends on the smoothness of the coefficients matrix and the data. For instance, for the absence of the term $f$, Zhong [\ref{ZX}] constructed discontinuous solutions for degenerate equations to give partial answer to De Giorgi's conjectures, which  were raised by  De Giorgi [\ref{DG1}] in a talk in Lecce, 1995.  In this direction, as far as we know, the classical results are established by Trudinger [\ref{TRU1}] under the assumption that $\frac{1}{p}+\frac{1}{q}<\frac{2}{n}$. Later,  Bella and Sch\"{a}ffner [\ref{BPMS3}] discussed the same problem under the optimal condition  $\frac{1}{p}+\frac{1}{q}<\frac{2}{n-1}$. However, for the present of the term $f$, such problem is seldom discussed. Naturally, some problems arise:

$\bullet$  Whether does the inequality similar as \eqref{unie} hold for nonuniform elliptic problem?

$\bullet$ What is a critical space of the nonhomogeneous term $f$ in the Lebesgue class $L^{s}(\Omega)$?

$\bullet$ Whether or not is the Lebesgue space $L^{\frac{n}{2}}(\Omega)$ to which  $\lambda^{-1}(x)$ belongs is sharp  to ensure the  H\"older continuity of the solution?

In this paper, we gave full answers to three problems mentioned  above. 
Here, we are interested in situations beyond any smoothness where  $a_{ij}(x)$ are assumed to be merely measurable functions satisfying conditions ({{H1}}), ({{H2}}), 
and $f$ is any function from some Lebesgue space $L^s$.
Our first finding is as follows: when $q>\frac{n}{2}$, $n\geq 2$, we prove that the maximum of the weak solution $u$ of \eqref{Equ01} can be controlled by $\|f\|_{s}$, where $s>\frac{nq}{2q-n}$.
Meanwhile, we can provide a counterexample to illustrate that $\frac{nq}{2q-n}$ is a critical exponent.
More precisely, when $q=\frac{n}{2},~n\geq 2$, we observe that for any $s\in[1,+\infty)$, there exists $f\in L^s(\Omega)$ such that the weak solution $u$ of \eqref{Equ01} is unbounded. In this case, for $n\geq3$, even when $f\in L^\infty(\Omega)$, there exists the solution satisfying $\sup\limits_{\Omega}u=+\infty$.
In addition, for the case of $q\in\left(\frac{n-1}{2},\frac{n}{2}\right),~n\geq 3$, we also find there exists $f\in L^\infty(\Omega)$ such that $\sup\limits_{\Omega} u=+\infty$. On the contrary, when $\frac{1}{p}+\frac{1}{q}\geq \frac{2}{n-1},~n\geq 3$, a counterexample is provided in [\ref{BPMS1}, \ref{FSS}], demonstrating that when $f=0$, the weak solution $u$ of \eqref{Equ01} satisfies $\sup\limits_\Omega u=+\infty$.
Finally, in the scenario where $n=2,~\frac{1}{q}+\frac{1}{p}=\frac{2}{n-1}$, i.e., $p=q=1$, Bella and Sch\"affner [\ref{BPMS3}] elucidated, using the maximum principle and Sobolev inequality in one dimension, that the weak solution $u$ is locally bounded when $f=0$.
However, we demonstrate that when $f\not\equiv 0$, for any $ s \in[1,+\infty)$, there is a counterexample which shows that \eqref{unie} is impossible.

Next we will state our main results. The first result provides an estimate of the local maximum of $|u|$.
\begin{theorem}\label{THM101}
Assume that (H1)-(H2) hold, and $f\in L^s(\Omega)$ for any $s>\frac{nq}{2q-n}~(q>\frac{n}{2})$.  Then for any $\gamma>0$, there exists $C= C(n,p,q,\gamma)\in [1,+\infty)$ such that for any ball $B_R\Subset\Omega, ~R>0$, the solution  $u$  of \eqref{Equ01} satisfies
	\begin{align}\label{th1}
	\sup_{B_{\theta R}}|u|&\leq C\left(\Lambda(B_R)\right)^{\frac{\delta p^\prime}{\gamma(\delta-1)}}\nonumber\\
	&\quad \times \left(\frac{1}{\left((1-\theta)R\right)^{\frac{n}{\gamma}(m_*+1)}}\|u\|_{L^\gamma(B_R)}+\frac{1}{\left((1-\theta)R\right)^{\frac{n}{\gamma}m_*+\frac{n}{s}-2}}\|f\|_{L^s(B_R)}\right),
	\end{align}
	where
	\begin{align*}
	\theta \in (0,1),~&~~\delta=2-\frac{1}{\chi},~~~
	\chi:=\frac{2q}{(q+1)p^*},~~~\frac{1}{p^*}=\min\left\lbrace \frac{1}{2}+\frac{1}{n-1}-\frac{1}{2p},1\right\rbrace,\\
	&m_*=\frac{p^\prime\delta}{\delta-1}\max\left\{\frac{1}{p}+\frac{1}{q}, \frac{2}{q}\right\},~~~p^\prime=\frac{p}{p-1},\\
	\Lambda(B_R)&:=\left(\fint_{B_R}\lambda^{-q}{\rm d}x\right)^\frac{1}{q}\left(\fint_{B_R}\mu^p{\rm d}x\right)^\frac{1}{p}+\left(\fint_{B_R}\lambda^{-q}{\rm d}x\right)^\frac{2}{q}.
	\end{align*}
\end{theorem}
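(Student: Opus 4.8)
The proof runs by a Moser-type iteration, the two new features being the treatment of the inhomogeneous term $f$ and a truncation device replacing the a priori boundedness of $u$ used in [\ref{CUR},\ref{XU}]. \textbf{Reduction.} Since $u_+$ and $u_-$ are nonnegative weak subsolutions of $-\dive(\mathbf A\nabla u_\pm)\le f^\pm$, and $\sup|u|=\max\{\sup u_+,\sup u_-\}$, it suffices to estimate a nonnegative weak subsolution, denoted again by $u$. Set $v:=u+\kappa$ with $\kappa:=\big((1-\theta)R\big)^{2-\frac ns}\|f\|_{L^s(B_R)}$; the role of the shift is that $v\ge\kappa>0$, and the particular power of $(1-\theta)R$ is chosen so that the two summands on the right of \eqref{th1} end up on the same footing after the iteration.

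\textbf{Caccioppoli step.} Fix $\theta R\le r'<r\le R$, a cutoff $\eta\in C_c^\infty(B_r)$ with $\eta\equiv1$ on $B_{r'}$, $0\le\eta\le1$, $|\nabla\eta|\le 2/(r-r')$, an exponent $\beta\ge1$ and a height $M>\kappa$. I would test the equation with $\varphi=\eta^2\big(T_M(v)^{2\beta-1}-\kappa^{2\beta-1}\big)\in H^1_0(\Omega)$, where $T_M(v)=\min\{v,M\}$; this first family of truncations is what makes $\varphi$ admissible without knowing $u\in L^\infty$, while a second truncation (on the power $v\mapsto v^\beta$ itself) is used to legitimise the chain rule and to pass to the limit $M\to\infty$ at the end by monotone convergence. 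Using (H2) on the left, Hölder with $\mu\in L^p$ on the cut-off term, and on the source term the bound $\int_{B_r}|f|\eta^2 v^{2\beta-1}\,dx\le\|f\|_{L^s(B_r)}\,\kappa^{-1}\,\|\eta v^{\beta}\|_{L^{2s'}(B_r)}^2$ (here $v\ge\kappa$ is crucial), one arrives, with $w:=\eta T_M(v)^\beta$, at an energy inequality controlling $\int_{B_r}\lambda|\nabla w|^2\,dx$ by $\beta^{c}$ times the sum of $(r-r')^{-2}\|\mu\|_{L^p(B_r)}\big(\int_{B_r}v^{2\beta p'}\big)^{1/p'}$ and $\kappa^{-1}\|f\|_{L^s(B_r)}\|w\|_{L^{2s'}(B_r)}^2$, up to dimensional volume factors.

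\textbf{Sobolev step.} This is where $p^*$, $\chi$, $\delta$ and the standing assumption $\tfrac1p+\tfrac1q<\tfrac2{n-1}$ enter. First use Hölder with $\lambda^{-1}\in L^q$ to pass from $\int\lambda|\nabla w|^2$ to a bound for $\|\nabla w\|_{L^{2q/(q+1)}}$, then apply the optimal Sobolev inequality on $(n-1)$-dimensional spheres in the spirit of Bella--Sch\"affner [\ref{BPMS3}] to gain integrability of $w$ up to $L^{2\chi'}$; coupling this with the Caccioppoli inequality, and absorbing the $f$-contribution $\|w\|_{L^{2s'}}^2$ by interpolating $L^{2s'}$ between $L^2$ and $L^{2\chi'}$ followed by Young's inequality — which is licit exactly when $s>\frac{nq}{2q-n}$ — yields a reverse-H\"older inequality of the form
\[ \Big(\fint_{B_{r'}}v^{2\beta\chi'}\,dx\Big)^{\frac1{2\beta\chi'}}\le\Big(C\beta^{c}\,\Lambda(B_R)^{a}\,(r-r')^{-b}R^{d}\Big)^{\frac1\beta}\Big(\fint_{B_r}v^{2\beta}\,dx\Big)^{\frac1{2\beta}}, \]
with $a,b,d$ assembled from $p',q,\delta,m_*$ as in the statement; crucially $\chi>1\iff\tfrac1p+\tfrac1q<\tfrac2{n-1}$, so each step genuinely gains integrability.

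\textbf{Iteration and conclusion.} Iterating this inequality along $\beta_k=\chi^k\beta_0$ and a geometric sequence of radii between $\theta R$ and $R$, the product of constants converges since $\sum_k\beta_k^{-1}$ and $\sum_k k\beta_k^{-1}$ are finite; this gives $\sup_{B_{\theta R}}v\le C\,\Lambda(B_R)^{\frac{\delta p'}{\gamma(\delta-1)}}\big((1-\theta)R\big)^{-\frac n\gamma(m_*+1)}\|v\|_{L^{\gamma_0}(B_R)}$ for the exponent $\gamma_0$ produced by the scheme. For an arbitrary $\gamma>0$ one invokes the usual interpolation trick: apply the bound on a one-parameter family of nested balls, use Young's inequality to dominate $\|v\|_{L^{\gamma_0}}$ by $\varepsilon\sup v+C(\varepsilon,\gamma)\|v\|_{L^{\gamma}}$, and reabsorb $\varepsilon\sup v$. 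Finally let $M\to\infty$ and unwind $v=u+\kappa$ with the choice of $\kappa$ to obtain \eqref{th1}. The main difficulty I expect is the Sobolev step: extracting the gain $\chi>1$ with the sharp dependence on $p,q$ from the interplay of the two H\"older estimates (for $\mu$ and for $\lambda^{-1}$) with the $(n-1)$-dimensional Sobolev inequality, while simultaneously absorbing the $f$-term — it is this balancing that forces precisely the threshold $s>\frac{nq}{2q-n}$. A secondary, more technical, point is to make the truncations $T_M$ and the limit $M\to\infty$ fully rigorous, so that boundedness of $u$ is never used.
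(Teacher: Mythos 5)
Your proposal follows essentially the same route as the paper: truncated power test functions (your $T_M(v)^{2\beta-1}$ versus the paper's piecewise-linearized $\phi_m$), the shift $v=u+\kappa$ with $\kappa\sim R^{2-\frac ns}\|f\|_{L^s}$ so that the source term is absorbed via $\int |f|\eta^2v^{2\beta-1}\le \kappa^{-1}\|f\|_s\|\eta v^\beta\|_{2s'}^2$ and interpolation (which is exactly where $2s'<q^*:=\frac{2nq}{(q+1)n-2q}$, i.e. $s>\frac{nq}{2q-n}$, is needed), H\"older with $\lambda^{-1}\in L^q$ plus Sobolev for the integrability gain, Moser iteration, and the standard Young/absorption trick to pass from $\gamma=2p'$ to arbitrary $\gamma>0$. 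One point in your sketch is misplaced, though it matters for the sharp hypothesis: if the cutoff term is estimated by plain H\"older as $(r-r')^{-2}\|\mu\|_{L^p}\big(\int v^{2\beta p'}\big)^{1/p'}$, the resulting reverse-H\"older inequality between Lebesgue norms only closes under $\frac1p+\frac1q<\frac2n$ (one needs $q^*>2p'$), not under $\frac1p+\frac1q<\frac2{n-1}$. The $(n-1)$-dimensional sphere Sobolev inequality of Bella--Sch\"affner does not provide the integrability gain of $w$ (that comes from the ordinary embedding $W^{1,\frac{2q}{q+1}}\hookrightarrow L^{q^*}$); its role is to \emph{optimize the Caccioppoli inequality over the cutoff} $\eta$ (Lemma \ref{lem2.1}), replacing the naive $\mu$-term by $(\sigma-\rho)^{-\frac{2n}{n-1}}\|\mu\|_{L^p}\|v^\beta\|_{W^{1,p^*}}^2$. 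This is why the paper runs the iteration on the $W^{1,p^*}$-norms of $(u_++k)^{\alpha}$ with gain $\delta=2-\frac1\chi$, $\chi=\frac{2q}{(q+1)p^*}$, reapplying Lemma \ref{lem2.1} at every step, rather than on Lebesgue norms as you propose; with that modification your argument coincides with the paper's.
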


\begin{remark}
	\rm We note that  $\frac{nq}{2q-n}\rightarrow \frac{n}{2}$ as $q\rightarrow +\infty$, which is consistent with the conclusion of the uniformly elliptic equations.
\end{remark}
\begin{remark}
\rm As a byproduct, the corresponding global results would be proceeding as in the proof of  [\ref{TRU2}, Theorem 4.1]. 
\end{remark}
\begin{remark}
	\rm From the conclusion of Theorem \ref{THM101},  we found that the integrability of $f$ depends not only on dimension  $n$, but also on the integrability of $\lambda^{-1}$. So, a natural question arises:
	  ``{\bf What is a critical exponent of the nonhomogeneous term  in the class $L^{s}(\Omega)$?}"
\end{remark}
Before  giving some answers to this problem, we first consider the special case  ${\bf{A}}(x)={\bf{I}}$. Under such case,  \eqref{Equ01} reduces the Poisson equation. We first extend $f$ to $\tilde{f}$, where
\begin{align*}
\tilde{f}=
\begin{cases}
f, & x \in B_{1}=\{x=(x_{1},x_{2},\cdots,x_{n})\in \mathbb{R}^n;~~ |x|=\sqrt{x_{1}^{2}+x_{2}^{2}+\cdots+x_{n}^{2}}<1\},\\
0, & x \in \mathbb{R}^n\setminus B_{1}.
\end{cases}
\end{align*}
Then, formally, the solution may be written as the following
\begin{align*}
u(x)=C(n)\int_{R^{n}} \tilde{f}(y)|x-y|^{2-n}{\rm d}y,~x\in B_1.
\end{align*}
 A simple computation shows the singular integral
\begin{align*}
\int_{B_1}\left(\frac{1}{|x-y|}\right)^{\frac{s(n-2)}{s-1}} {\rm d} y,~x\in B_{1}
\end{align*}
converges if and only if $s>\frac{n}{2}$. Further,  the H\"older's inequality implies 
\begin{align*}
u(x)\leq C(n)\|f\|_{L^s(B_1)}\left(\int_{B_1}\left(\frac{1}{|x-y|}\right)^{\frac{s(n-2)}{s-1}} {\rm d} y\right)^{1-\frac{1}{s}}.
\end{align*}
The formal derivation sheds light on the fact that $s>\frac{n}{2}$ is the sufficient condition for the boundedness of $|u|$. Of course, we also obtain similar analytic conclusion from scaling viewpoint. Indeed, 
 we set $u_\lambda(x)=u(\lambda x)$ and $f_\lambda(x)=\lambda^{2}f(\lambda x)$ for any $0<\lambda \ll 1$.
It is easy to verify that $\Delta u_\lambda=f_\lambda(x)$ and $\|f_\lambda(x)\|_{L^s(B_1)}=\lambda^{2-\frac{n}{s}} \|f\|_{L^s(B_{\lambda })}$.
It is also easy to verify that the necessary condition for \eqref{unie} is $s>\frac{n}{2}$ by rescaling technique.
Therefore, it is well known that $s>\frac{n}{2}$ is the sufficient and necessary condition for \eqref{unie}.
The argument above is similar in spirit, though not in detail, to the examples in [\ref{CA1}, \ref{TEVUJ}].
This leads to a natural question: is the exponent $\frac{nq}{2q-n}$ in Theorem \ref{THM101} optimal?

Next, we will demonstrate that the  exponent $\frac{nq}{2q-n}$  in Theorem \ref{THM101} is optimal. What's more, we will provide some counterexamples to illustrate the corresponding  results.
\begin{example}\label{e1}
	\rm Let $\Omega=B_\frac{1}{4}(0)$, with $q>\frac{n}{2}$ and $p,~q$ satisfy $\frac{1}{q}+\frac{1}{p}<\frac{2}{n-1}$. Here ${\bf{A}}(x)$ is a positive semidefinite diagonal matrix, and $\lambda(x)=\mu(x)=|x|^\beta\left(\log\frac{1}{|x|}\right)^\theta$, where $\beta=\frac{n}{q}<2$ and $\theta=\frac{1}{q}+\frac{1}{2}-\frac{1}{n}$.
	Then there exists $f\in L^\frac{nq}{2q-n}(\Omega)$ with $n\geq 3$, or $f\in L^s(\Omega),~s\in [1, \frac{q}{q-1})$ with $n=2$ such that the  solution $u$ of \eqref{Equ01} is unbounded.  To be precise, the functions $f(x)$ and $u(x)$ may be expressed as follows
\begin{align*}\small
f(x)&=(n+1)|x|^{\beta-1}\left(\log\frac{1}{|x|}\right)^\theta\int_{B_\frac{1}{2}(0)}|y|^{1-n}\left(|x|+|y|\right)^{-2}\frac{1}{\log\frac{1}{|y|}}{\rm d}y\\
&\quad-\theta|x|^{\beta-1}\left(\log\frac{1}{|x|}\right)^{\theta-1}\int_{B_\frac{1}{2}(0)}|y|^{1-n}\left(|x|+|y|\right)^{-2}\frac{1}{\log\frac{1}{|y|}}{\rm d}y\\
&\quad-2|x|^\beta\left(\log\frac{1}{|x|}\right)^\theta\int_{B_\frac{1}{2}(0)} |y|^{1-n}\left(|x|+|y|\right)^{-3}\frac{1}{\log\frac{1}{|y|}}{\rm d}y,
\end{align*}
and
 \begin{align*}\small
u(x)=\int_{B_\frac{1}{2}(0)}|y|^{1-n}\left(|x|+|y|\right)^{-1}\frac{1}{\log\frac{1}{|y|}}{\rm d}y.
\end{align*}
 
\end{example}
\begin{remark}
	\rm When $n=2$ and $q>1$, we can only say that $s>\frac{q}{q-1}$ is almost sharp. We conjecture that the case of $n=2$ and $f\in L^\frac{q}{q-1}\left(\Omega \right) $ is different. However, at present, we can make no further assertions regarding this.
\end{remark}
\begin{remark}
	\rm Example \ref{e1} indicates that if the assumption on $\lambda^{-1}$
	remains unchanged and the integrability of $f$ becomes weaker, then it will result in $u\notin L^\infty(\Omega)$.
\end{remark}
In Theorem \ref{THM101},  we have $\frac{nq}{2q-n}\rightarrow +\infty$ as $q\rightarrow \frac{n}{2}$.
The question arises whether $\|u\|_{\infty}$ can be controlled by $\|f\|_{\infty}$ when $q=\frac{n}{2},$ and $\frac{1}{p}+\frac{1}{q}<\frac{2}{n-1}$ for $n\geq 2$.
By the way, in this scenario, with $q^*=\frac{2nq}{(q+1)n-2q}=2$, the interpolation inequality in Theorem \ref{THM101} becomes invalid. However, we will provide the following counterexample to demonstrate when $n\geq 3$, $q=\frac{n}{2}$ and $\frac{1}{p}+\frac{1}{q}<\frac{2}{n-1}$, there exists $f\in L^\infty(\Omega)$ such that $u$ is unbounded, and when $n=2$, for any $s\in [1,+\infty)$, we can find $f\in L^s(\Omega)$ such that the maximum norm of $u$ cannot be dominated by $\|f\|_{s}$.
\begin{example}\label{e2}
	\rm Let $\Omega=B_\frac{1}{4}(0)$, with $q=\frac{n}{2}$ and $p,~q$ satisfy $\frac{1}{p}+\frac{1}{q}<\frac{2}{n-1}$. Here, ${\bf{A}}(x)$ is a positive semidefinite diagonal matrix, and $\lambda(x)=\mu(x)=|x|^2\left(\log\frac{1}{|x|}\right)^\theta$, where $\theta=\frac{5}{2n}$. Then  for $n\geq 3$, there exists $f\in L^\infty(\Omega)$, and for $n=2$, there exists $f\in L^s(\Omega)~(1 \le s<+\infty)$ such that the weak solution $u\in H^1(\Omega, {\bf{A}})$ of \eqref{Equ01} is unbounded.
\end{example}
Under our restriction $\frac{1}{p}+\frac{1}{q}<\frac{2}{n-1}$, we observe that  $q>\frac{n-1}{2}$. In the preceding discussion, we focus on the case where $q\geq \frac{n}{2}$. Now we will demonstrate that for $q\in\left(\frac{n-1}{2},~\frac{n}{2}\right)$, $n\geq 3$, there exists $f\in L^\infty(\Omega)$ such that the weak solution $u$ of \eqref{Equ01} is unbounded.
\begin{example}\label{e3}
	\rm For $n\geq 3,~q\in\left(\frac{n-1}{2},\frac{n}{2}\right)$ and $p,~q$ satisfy $\frac{1}{p}+\frac{1}{q}<\frac{2}{n-1}$. Consider the case where $\Omega=B_\frac{1}{4}(0)$, ${\bf{A}}(x)$ is a positive semidefinite diagonal matrix, and $\lambda(x)=\mu(x)=|x|^\beta\left(\log\frac{1}{|x|}\right)^\theta$, with $\beta=\frac{n}{q}>2,$ and $\theta>\frac{1}{q}$. Then there exists $f\in L^\infty(\Omega)$ such that $\sup\limits_{\Omega}u=+\infty$.
\end{example}
\begin{remark}
	\rm {Example \ref{e2} and \ref{e3} demonstrate that for $\|u\|_{\infty}<+\infty$, $L^\frac{n}{2}(\Omega)$ is the critical Lebesgue space for $\lambda^{-1}$ in divergence equations.}
\end{remark}
In addition, for the scenario where $\frac{1}{p}+\frac{1}{q}\geq\frac{2}{n-1}$ and $n\geq 3$, please refer to [\ref{BPMS3}, \ref{BPMS1}, \ref{FSS}] for related examples. 
In particular, in [\ref{FSS}, Theorem 2], an example is presented where the weak solution $u$ of \eqref{Equ01} is unbounded under the conditions $\frac{1}{p}+\frac{1}{q}>\frac{2}{n-1}$ and $f=0$
(A similar description was also provided in [\ref{BPMS3}, Remark 3.5]).
Subsequently, Bella and Sch\"affner [\ref{BPMS1}] extended the example in [\ref{FSS}] to the P-Laplacian equation when $\frac{1}{p}+\frac{1}{q}\geq \frac{m}{n-1}$ and $n\geq 3$.
It is noteworthy that they demonstrated in [\ref{BPMS3}] that when $n=2,~p=q=1,~f=0$, there exists $\|u\|_{\infty}<+\infty$.
Expanding on these findings, we establish that when $f\not\equiv 0$, $q=\frac{2}{n-1}~(n\geq 3)$ and $p=+\infty$, there exists $f\in L^\infty(\Omega)$ such that the solution $u$ of \eqref{Equ01} is unbounded. See Remark \ref{bor} below for further details.
\begin{remark}
	\rm When $n=2$, $q=1$, $p\geq1$, $f\not\equiv 0$, we are unable to establish any relationship between $\|u\|_{\infty}$ and $\|f\|_{\infty}$.
	This is because in this case, the maximum principle and the Moser iteration technique are not applicable. It may be necessary to explore a new approach or consider a counterexample in order to address this issue.
\end{remark}
Our second result is to give an improved upper bound, which may measure how much bigger the associated space $L^s(\Omega)$ is than the optimal space $L^\frac{nq}{2q-n}(\Omega)$.
\begin{theorem}\label{THM102}
	Under the assumptions in Theorem \ref{THM101}, the solution $u\in H^1_0(\Omega,{\bf{A}})$ of problem 
\eqref{Equ01} satisfies
	\begin{align}
	\|u\|_{\infty}\leq C\|f\|_{{s_0}}\left[\log\left(\frac{\|f\|_{s}}{\|f\|_{s_0}}+1\right)+1\right ],\quad
	\end{align}
	where $s_0:=\frac{nq}{2q-n}$, $C=C\left(\left\|\lambda^{-1}\right\|_q,\Omega,n,q\right)\in[1,+\infty)$. 
\end{theorem}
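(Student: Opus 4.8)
The plan is to interpolate between the two endpoints $L^{s_0}$ and $L^s$ inside the estimate of Theorem \ref{THM101}, and then optimize over a free parameter. Concretely, fix a ball $B_R \Subset \Omega$ and set $\theta = 1/2$, $\gamma = 2$ in \eqref{th1}. Since $u \in H^1_0(\Omega,{\bf A})$, Theorem \ref{THM101} gives a bound of $\|u\|_{L^\infty(B_{R/2})}$ in terms of $\|u\|_{L^2(B_R)}$ and $\|f\|_{L^s(B_R)}$, with constants depending on $R$ and on $\Lambda(B_R)$. The key observation is that, by Hölder's inequality on $B_R$, for any $\varepsilon > 0$ one has $\|f\|_{L^{s_0}(B_R)} \le |B_R|^{\frac1{s_0}-\frac1{s}}\|f\|_{L^s(B_R)}$, and more usefully the reverse-type splitting: write $f = f\chi_{\{|f| \le K\}} + f\chi_{\{|f| > K\}}$ for a threshold $K > 0$ to be chosen. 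On the small part, $\|f\chi_{\{|f|\le K\}}\|_{s} \le K^{1-s_0/s}\|f\|_{s_0}^{s_0/s}$; on the large part, $\|f\chi_{\{|f|>K\}}\|_{s_0} \le K^{1-s/s_0}\|f\|_s^{s/s_0}$ — or, more simply, one uses that the $L^{s_0}$ mass of $f$ above level $K$ is controlled by $K^{s_0-s}\|f\|_s^s$. Splitting the solution correspondingly as $u = u_1 + u_2$ (by linearity of \eqref{Equ01}), applying the sharp $L^\infty$ estimate to $u_1$ driven by $\|f_1\|_{s_0}$ and a cruder bound to $u_2$ driven by $\|f_2\|_{s}$, and then choosing $K$ to balance the two contributions, produces the logarithm.

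The steps, in order: (i) reduce to a global statement by a covering/scaling argument, absorbing the geometric factors $((1-\theta)R)^{-\ast}$ and $\Lambda(B_R)$ into the constant $C(\|\lambda^{-1}\|_q,\Omega,n,q)$ — here one uses that $\Omega$ is bounded and that $\lambda^{-1}\in L^q$, together with $q = n/2 \cdot s_0/(s_0-\ast)$-type bookkeeping so that $p$ does not appear in the final constant (this is where the hypothesis $q > n/2$ and the definition $s_0 = nq/(2q-n)$ are used to make the endpoint exponent in Theorem \ref{THM101} exactly $L^{s_0}$); (ii) bound $\|u\|_\infty$ by a dimensionless multiple of $\|f\|_{s_0}$ alone — this is Theorem \ref{THM101} at its endpoint, valid because $u \in H^1_0$ controls the $\|u\|_{L^2}$ term via the global energy estimate $\int \lambda |\nabla u|^2 \lesssim \|f\|_{s_0}\|u\|_{\ast}$ and Sobolev embedding; (iii) perform the truncation $f = f_1 + f_2$ at height $K$, solve $-\dive({\bf A}\nabla u_i) = f_i$, estimate $\|u_1\|_\infty \le C\|f_1\|_{s_0} \le C\|f\|_{s_0}$ and $\|u_2\|_\infty \le C\|f_2\|_{s_0}$ where $\|f_2\|_{s_0}^{s_0} = \int_{\{|f|>K\}}|f|^{s_0} \le K^{s_0-s}\|f\|_s^s$; (iv) optimize: choosing $K$ so that $K^{1-s/s_0}\|f\|_s^{s/s_0}$ is comparable to $\|f\|_{s_0}$, i.e. $K \sim (\|f\|_s^{s/s_0}/\|f\|_{s_0})^{s_0/(s_0-s)}$, and summing a geometric-type series over dyadic levels $K, 2K, 4K, \dots$ (or iterating the estimate $\log$-many times) yields $\|u\|_\infty \le C\|f\|_{s_0}[\log(\|f\|_s/\|f\|_{s_0}+1)+1]$.

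The cleanest implementation of step (iv) is the classical dyadic decomposition $f = \sum_{k \ge 0} f_k$ where $f_k = f\chi_{\{2^{k-1}K < |f| \le 2^k K\}}$ for $k \ge 1$ and $f_0 = f\chi_{\{|f|\le K\}}$; then $\|u\|_\infty \le \sum_k \|u_k\|_\infty \le C\sum_k \|f_k\|_{s_0}$, and since $\|f_k\|_{s_0}^{s_0} \le (2^kK)^{s_0-s}\|f\|_s^s$ the tail $\sum_{k\ge k_0}$ is a convergent geometric series (because $s > s_0$) summing to $O(\|f\|_{s_0})$ once $2^{k_0}K \gtrsim \|f\|_s$, while the head contributes $O(k_0\|f\|_{s_0})$ with $k_0 \sim \log(\|f\|_s/(K\|f\|_{s_0})+\cdots)$; taking $K \sim \|f\|_{s_0}$ gives the stated bound. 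The main obstacle I anticipate is step (i): verifying that the constant in Theorem \ref{THM101}, which a priori involves $\Lambda(B_R)$ (hence $\|\mu\|_p$ and $p$) and the exponents $\delta, m_*, p^*$, can genuinely be reorganized so that only $\|\lambda^{-1}\|_q$, $n$, $q$, and $\Omega$ survive at the endpoint $s = s_0$. This requires checking that at $s=s_0$ the $\|f\|$-coefficient's dependence on $\|\mu\|_p$ drops out (the $\mu$-dependence should enter only through the $\|u\|_{L^\gamma}$ term, which is then eliminated by the global energy bound in step (ii)), and carefully tracking that the power of $(1-\theta)R$ multiplying $\|f\|_{L^s}$ becomes a harmless constant after the covering argument — this bookkeeping, rather than any deep inequality, is the delicate part.
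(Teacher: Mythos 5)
Your proposal has a genuine gap at its core: steps (ii)--(iv) rest on the endpoint estimate $\|u\|_{\infty}\leq C\|f\|_{s_0}$ (applied to each dyadic piece $f_k$), but that estimate is false --- this is precisely the content of Example \ref{e1} in the paper, which exhibits $f\in L^{s_0}(\Omega)$ with $s_0=\frac{nq}{2q-n}$ for which the solution is unbounded. Theorem \ref{THM101} requires $s>s_0$ strictly, and the whole reason Theorem \ref{THM102} carries a logarithmic correction is that the endpoint bound fails. Without it, your dyadic decomposition does not close: if instead you estimate each piece by $\|u_k\|_\infty\leq C(s)\|f_k\|_s$, then since $\|f_k\|_s\leq\|f_k\|_\infty^{1-s_0/s}\|f_k\|_{s_0}^{s_0/s}$ and $\|f_k\|_{s_0}\lesssim (2^kK)^{(s_0-s)/s_0}\|f\|_s^{s/s_0}$, one finds $\|f_k\|_s\sim\|f\|_s$ for every $k$, so the series $\sum_k\|u_k\|_\infty$ diverges. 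The alternative rescue --- optimizing the exponent $s$ itself, taking $s=s_0+1/\log(\cdots)$ --- would require the explicit blow-up rate of the constant in Theorem \ref{THM101} as $s\downarrow s_0$, which that theorem does not supply (the interpolation step \eqref{dai2} degenerates as $2s'\to q^*$ and this dependence is not tracked).

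The paper's route is entirely different and avoids the endpoint. It first proves an exponential integrability lemma (Lemma \ref{cs}): testing the equation with a truncated version of $e^{\alpha u}-1$ (truncation is needed because $u$ is not assumed bounded a priori) yields $\bigl(\int_\Omega e^{\alpha q^*u/2}\,{\rm d}x\bigr)^{1/q^*}\leq C$ for $\alpha$ small relative to $\|\lambda^{-1}\|_q$, after normalizing $\|f\|_{s_0}=1$. It then runs a Moser iteration on $\omega=e^{\alpha u_+/2}$: a Caccioppoli-type inequality for powers $\omega^{(\beta+1)/2}$ gives $\|\omega\|_\infty\leq C\bigl[\alpha(\|\lambda^{-1}\|_q\|f\|_s+1)\bigr]^{\chi/(2(\chi-1))}\|\omega\|_{2s'}$, the last factor is controlled by Lemma \ref{cs}, and taking logarithms of $e^{\alpha\|u_+\|_\infty/2}\leq C(\|f\|_s+1)^{\cdots}$ produces the bound $\|u\|_\infty\leq C[\log(\|f\|_s+1)+1]$; undoing the normalization gives the statement. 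Note also that the paper works globally with $u\in H^1_0$ and needs no cutoff functions, which is how $\mu$ and $p$ are kept out of the constant --- your step (i), which routes through the local Theorem \ref{THM101} and its $\Lambda(B_R)$-dependent constant, would not achieve this. To repair your argument you would need to replace the false endpoint bound by the exponential mechanism (or an equivalent quantitative $s\downarrow s_0$ asymptotic), which is exactly the missing idea.
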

\begin{remark}
	\rm Compared with [\ref{CUR}, \ref{XU}], our proof also works without any assumption of the boundedness of $u$. Our main strategy is to give more precise Caccioppoli inequality for $e^{\alpha u_+}$ by applying truncation  approaching techniques.

\end{remark}

Thirdly, with the aid of Theorem \ref{THM101}, we establish the Harnack inequality.
\begin{theorem}\label{THM103}
	Assuming that (H1)-(H2) hold, $q>\frac{n}{2}$ and $f\in L^s(\Omega)$ for some $s>\frac{nq}{2q-n}$.
	Let $u\in H^{1}\left(\Omega,{\bf{A}}\right)$ be a nonnegative solution in $\Omega$. Then for any $B_R\subset \Omega$, the following inequality holds
	\begin{align}\label{thm103}
	\sup_{B_\frac{R}{2}}u\leq C \left(\inf_{B_\frac{R}{2}}u+R^{2-\frac{n}{s}}\|f\|_{L^s\left(B_R\right)}\right),
	\end{align}
	where $C$ depends on $n,~p,~q$ and $\Lambda(B_R)$.
\end{theorem}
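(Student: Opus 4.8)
The plan is to carry out the Moser iteration scheme and, to deal with the right-hand side, to replace $u$ by the shifted function $\bar u:=u+k$ with
$$k:=R^{2-\frac ns}\,\|f\|_{L^s(B_R)}$$
(one first works with $u+k+\varepsilon$, $\varepsilon>0$, to ensure strict positivity, derives all bounds with constants independent of $\varepsilon$, and lets $\varepsilon\downarrow0$). Two features make this choice convenient: $\bar u$ again solves \eqref{Equ01} with the same datum $f$, and $u\ge0$ gives $k\le\bar u$ pointwise, hence $k\le\bigl(\fint_{B_R}\bar u^{\gamma}\bigr)^{1/\gamma}$ for every $\gamma>0$. In particular the inhomogeneous term on the right of \eqref{th1} is dominated by the $\bar u$--term, so Theorem \ref{THM101} (applied to $\bar u$, with $\theta=\tfrac12$) supplies the \emph{upper} bound $\sup_{B_{R/2}}\bar u\le C\bigl(\fint_{B_R}\bar u^{\,p_0}\bigr)^{1/p_0}$ for every $p_0>0$, with $C=C(n,p,q,p_0,\Lambda(B_R))$. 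The Harnack inequality then follows once this is combined with a negative--power bound $\inf_{B_{R/2}}\bar u\ge c\bigl(\fint_{B_R}\bar u^{-p_0}\bigr)^{-1/p_0}$ and a crossover estimate $\bigl(\fint_{B_R}\bar u^{\,p_0}\bigr)^{1/p_0}\le C\bigl(\fint_{B_R}\bar u^{-p_0}\bigr)^{-1/p_0}$ for a suitable $p_0$: chaining the three inequalities and unwinding $\bar u=u+k$ gives exactly \eqref{thm103}, since $\sup_{B_{R/2}}\bar u=\sup_{B_{R/2}}u+k$ and $\inf_{B_{R/2}}\bar u=\inf_{B_{R/2}}u+k$.

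For the negative--power bound I would test \eqref{Equ01} with $\bar u^{\,2\beta-1}\eta^{2}$ for $\beta<0$ and a standard cut-off $\eta$; using $\mathbf A\xi\cdot\xi\ge\lambda|\xi|^{2}$, $|\mathbf A\xi\cdot\zeta|\le\mu|\xi|\,|\zeta|$ and Young's inequality this produces a Caccioppoli inequality for $v:=\bar u^{\beta}\eta$. The only new term relative to the homogeneous case is $\int f\,\bar u^{\,2\beta-1}\eta^{2}$; writing $\bar u^{\,2\beta-1}\eta^{2}=v^{2}\bar u^{-1}\le k^{-1}v^{2}$, then applying H\"older with exponent $s$ and the weighted Sobolev inequality available under $\frac1p+\frac1q<\frac2{n-1}$ to pass from $\|v\|_{L^{2s'}}^{2}$ to $\int_B\lambda|\nabla v|^{2}$, the definition of $k$ makes the resulting prefactor exactly $R^{\frac ns-2}$, which the Sobolev inequality converts into an $\varepsilon$--fraction of the weighted Dirichlet energy plus lower--order terms; the remaining constants are controlled by $\Lambda(B_R)$ as in the proof of Theorem \ref{THM101}. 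Inserting the Caccioppoli inequality into the weighted Sobolev inequality yields a reverse--H\"older chain whose iteration from $\beta=-\tfrac{p_0}{2}$ to $-\infty$ gives $\|\bar u^{-1}\|_{L^{\infty}(B_{R/2})}\le C\bigl(\fint_{B_R}\bar u^{-p_0}\bigr)^{1/p_0}$.

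For the crossover estimate I would test \eqref{Equ01} with the borderline choice $\bar u^{-1}\eta^{2}$ (the case $\beta=0$), obtaining $\int_{B_r}\lambda|\nabla\log\bar u|^{2}\eta^{2}\lesssim\int_{B_{2r}}\lambda|\nabla\eta|^{2}+(f\text{-term})\lesssim r^{\,n-2}\bigl(\fint\mu^{p}\bigr)^{1/p}+(\cdots)$, the $f$--term being absorbed exactly as above. Combining H\"older, $\int_{B_r}|\nabla\log\bar u|\le\bigl(\int_{B_r}\lambda|\nabla\log\bar u|^{2}\bigr)^{1/2}\bigl(\int_{B_r}\lambda^{-1}\bigr)^{1/2}$, the bound $\int_{B_r}\lambda^{-1}\le r^{\,n}\bigl(\fint\lambda^{-q}\bigr)^{1/q}$, and the classical unweighted Poincar\'e inequality $\fint_{B_r}|w-w_{B_r}|\lesssim r^{\,1-n}\int_{B_r}|\nabla w|$, one checks that all powers of $r$ cancel, so $w:=\log\bar u$ satisfies $\fint_{B_r}|w-w_{B_r}|\le C\,\Lambda(B_R)^{1/2}$ for all $B_r\subset B_R$; i.e.\ $\log\bar u\in\mathrm{BMO}(B_R)$ with norm controlled by $\Lambda(B_R)$. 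The John--Nirenberg inequality then produces $p_0=p_0(n,\Lambda(B_R))>0$ with $\bigl(\fint_{B_R}\bar u^{\,p_0}\bigr)\bigl(\fint_{B_R}\bar u^{-p_0}\bigr)\le C$, which is the crossover estimate.

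The main difficulty I anticipate is not the overall architecture, which is the classical weak Harnack $+$ crossover $+$ local maximum estimate, but the uniform bookkeeping of the right-hand side and of the geometric constants: in each of the three steps one must verify that the $f$--contribution is absorbable by an $\varepsilon$--fraction of the weighted Dirichlet energy with one constant depending only on $n,p,q$ and $\Lambda(B_R)$, and that the powers of $R$ and of the running exponents cancel so that the iterated constants stay finite. This is exactly what the normalization $k=R^{2-n/s}\|f\|_{L^s(B_R)}$, together with the scale-invariant weighted Sobolev/Poincar\'e inequalities valid in the range $\frac1p+\frac1q<\frac2{n-1}$, is designed to deliver. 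A further technical point, already taken care of by the truncation and approximation functions constructed earlier in the paper, is that $\log\bar u$ and the powers $\bar u^{\beta}$ are admissible test functions in $H^{1}(\Omega,\mathbf A)$ without any a priori boundedness of $u$.
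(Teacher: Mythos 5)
Your architecture (local maximum estimate from Theorem \ref{THM101}, negative-power Moser iteration for the infimum, crossover via a BMO bound on $\log\bar u$ plus John--Nirenberg) is the classical uniformly elliptic scheme, and the first two pieces do go through essentially as you describe; the paper's Lemma \ref{THM104} likewise shifts by $h=\|f\|_{L^s}$ (after normalizing $R=1$) and absorbs the $f$-term via $|f|\bar u^{-1}\le h^{-1}|f|$ exactly as you propose. The genuine gap is the crossover step. John--Nirenberg requires $\fint_{B_r}|\log\bar u-(\log\bar u)_{B_r}|\,{\rm d}x\le C\Lambda(B_R)^{1/2}$ \emph{uniformly over all sub-balls} $B_r\subset B_R$, down to $r\to0$. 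Your computation produces, on a ball $B_r$, the bound $r^{1-n}\cdot\bigl(r^{n-2}\bigl(\fint_{B_{2r}}\mu^p\bigr)^{1/p}\bigr)^{1/2}\cdot\bigl(r^{n}\bigl(\fint_{B_r}\lambda^{-q}\bigr)^{1/q}\bigr)^{1/2}$: the powers of $r$ cancel only because the averages of $\mu^p$ and $\lambda^{-q}$ are taken over the \emph{small} ball, and under the mere hypotheses $\lambda^{-1}\in L^q$, $\mu\in L^p$ these local averages are not uniformly bounded by $\Lambda(B_R)$. For instance $\lambda^{-1}(x)=|x-x_0|^{-\alpha}$ with $0<\alpha q<n$ gives $\fint_{B_r(x_0)}\lambda^{-q}\sim r^{-\alpha q}\to\infty$; if instead one uses the global norms $\|\mu\|_{L^p(B_R)}$, $\|\lambda^{-1}\|_{L^q(B_R)}$, the leftover power of $r$ is $r^{-n(\frac{1}{2p}+\frac{1}{2q})}$, which blows up. So $\log\bar u$ need not lie in ${\rm BMO}$ with norm controlled by $\Lambda(B_R)$; uniform-in-scale control of this kind is precisely what the Muckenhoupt $A_2$ hypothesis of Fabes--Kenig--Serapioni is designed to supply, and it is not available here.

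The paper therefore uses the logarithm only at a single fixed scale. Writing $v=\log\bigl(k/(u+h)\bigr)$ with $k=\exp\bigl(\fint_{B_\tau}\log(u+h)\bigr)$, Step 1 of Lemma \ref{THM104} notes that $v$ is a subsolution with datum $\tilde f=f/(u+h)$, $\|\tilde f\|_{L^s}\le1$, and applies the local boundedness machinery of Theorem \ref{THM101} to $v$, together with one Poincar\'e inequality (using $\fint_{B_\tau}v=0$) to control $\|v\|_{L^{q_*}}$; this yields $\sup_{B_\theta}v\le C$, i.e.\ $\exp(\fint\log(u+h))\le C(\inf_{B_\theta}u+h)$, which is your negative-power bound and half of the crossover in one stroke. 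Step 2 replaces John--Nirenberg by a Caccioppoli iteration on the powers $(\omega_++1)^{\beta}$ of $\omega=\log((u+h)/k)$, proving $\|\omega_+\|_{L^i}\le C_1\Lambda(B_1)^{C_2}\,i$ for every $i$ and summing the exponential series for $e^{p_0\omega_+}$; Step 3 then upgrades $L^{p_0}$ to $L^{\gamma}$. To keep your architecture you would have to replace the BMO/John--Nirenberg link by such a single-scale substitute (or by the abstract lemma of Bombieri--Giusti); as written, that link does not close.
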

Finally, we present some consequences of Theorem \ref{THM103}, which are now considered standard and therefore we only provide the statements without the proof.
In a strictly elliptic setting where $\Lambda(B_R)$ is uniformly bounded, the Harnack inequality implies global H\"older continuity.
Due to the dependence of the constant in \eqref{thm103} on $\Lambda(B_R)$, the global H\"older continuity is generally not true in the nonuniform case (see [\ref{BPMS3}, \ref{TRU1}]).
However, Theorem \ref{THM103} leads to the following local result.
\begin{corollary}\label{Cor01}
	Suppose that (H1)-(H2) hold, $q>\frac{n}{2}$ and $f\in L^s(\Omega)$ for some $s>\frac{nq}{2q-n}$.
	Let $u\in H^1\left(\Omega,{\bf{A}}\right)$ be a weak solution of \eqref{Equ01}. Then for any $r\in(0,R]$, we have
	\begin{align*}
	\sup_{B_\frac{r}{2}} u- \inf_{B_\frac{r}{2}} u\leq C\left(\frac{r}{R}\right)^\alpha\left\{\left(\frac{1}{R^n}\int_{B_R}u^\gamma {\rm d}x\right)^\frac{1}{\gamma}+R^{2-\frac{n}{s}}\left\|f\right\|_{L^s\left(B_R\right)}\right\},
	\end{align*}
	where $C =C(\Lambda(B_R),~n,~p,~q)$, $\alpha=\alpha(\Lambda(B_R),~n,~p,~q~s)>0$.
\end{corollary}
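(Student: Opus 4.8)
The plan is to run the classical De Giorgi--Nash--Moser oscillation-decay scheme, now powered by the Harnack inequality of Theorem~\ref{THM103} together with the local boundedness estimate of Theorem~\ref{THM101}. Fix $B_R\Subset\Omega$ and, for concentric balls $B_\rho$ with $0<\rho\le R$, set $M_\rho=\sup_{B_\rho}u$, $m_\rho=\inf_{B_\rho}u$ and $\omega(\rho)=M_\rho-m_\rho$; by Theorem~\ref{THM101} these are finite for $\rho<R$. Since $M_\rho-u\ge0$ and $u-m_\rho\ge0$ are weak solutions of \eqref{Equ01} on $B_\rho$ with right-hand sides $-f$ and $f$, applying Theorem~\ref{THM103} on $B_\rho$ to each gives
\[
M_\rho-m_{\rho/2}\le C_H\Big(M_\rho-M_{\rho/2}+\rho^{\,2-\frac ns}\|f\|_{L^s(B_\rho)}\Big),\qquad
M_{\rho/2}-m_\rho\le C_H\Big(m_{\rho/2}-m_\rho+\rho^{\,2-\frac ns}\|f\|_{L^s(B_\rho)}\Big),
\]
where $C_H\ge1$ is the Harnack constant. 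Adding and rearranging yields the oscillation-reduction inequality
\[
\omega(\rho/2)\ \le\ \gamma_0\,\omega(\rho)\ +\ C\,\rho^{\,2-\frac ns}\,\|f\|_{L^s(B_R)},\qquad \gamma_0:=\frac{C_H-1}{C_H+1}\in[0,1),
\]
where I have used $\|f\|_{L^s(B_\rho)}\le\|f\|_{L^s(B_R)}$ and, decisively, that $s>\frac{nq}{2q-n}>\frac n2$ forces $2-\frac ns>0$, so the inhomogeneous term is a non-decreasing perturbation vanishing as $\rho\to0$.

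Next I iterate: feeding this into the standard interpolation lemma for non-decreasing functions ([\ref{GT}, Lemma~8.23]) with perturbation $\sigma(\rho)=C\rho^{\,2-n/s}\|f\|_{L^s(B_R)}$ produces, for all $r\in(0,R]$,
\[
\omega(r)\ \le\ C\Big(\tfrac rR\Big)^{\alpha}\Big(\omega(R/2)+R^{\,2-\frac ns}\|f\|_{L^s(B_R)}\Big),
\]
with $\alpha>0$ determined by $\gamma_0$, hence by $n,p,q,s$ and $\Lambda(B_R)$ — this is where the stated dependence of $\alpha$ originates. To eliminate $\omega(R/2)$ I bound $\omega(R/2)\le2\|u\|_{L^\infty(B_{R/2})}$ and apply Theorem~\ref{THM101} with $\theta=\frac12$, which dominates $\|u\|_{L^\infty(B_{R/2})}$ by a constant depending on $n,p,q,\Lambda(B_R)$ times $\big(R^{-n}\int_{B_R}u^\gamma\,{\rm d}x\big)^{1/\gamma}+R^{\,2-n/s}\|f\|_{L^s(B_R)}$, once the powers of $R$ are normalized. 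Substituting this back and using $\sup_{B_{r/2}}u-\inf_{B_{r/2}}u=\omega(r/2)\le\omega(r)$ yields exactly the estimate claimed in Corollary~\ref{Cor01}.

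The step I expect to be the genuine obstacle — and the reason the proof is left out as ``standard'' — is the scale-uniformity hidden in the iteration: Theorem~\ref{THM103} is invoked on every dyadic sub-ball $B_\rho\subset B_R$, so one must keep the Harnack constant $C_H$, equivalently the functional $\Lambda(B_\rho)$, under control as $\rho\downarrow0$. One uses $\fint_{B_\rho}\lambda^{-q}\le(R/\rho)^n\fint_{B_R}\lambda^{-q}$ and the analogue for $\mu^p$ to track the $(R/\rho)$-factors through the geometric iteration; carrying this out carefully (and absorbing them into $C$ and, if needed, into a smaller $\alpha$) is the crux of the bookkeeping, and it is exactly what forces the constants in Corollary~\ref{Cor01} to depend on $\Lambda(B_R)$ rather than on a universal ellipticity ratio, in contrast with the strictly elliptic case. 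All the remaining ingredients — the oscillation-reduction algebra, the iteration lemma, and the coupling of Theorems~\ref{THM101} and \ref{THM103} — are routine.
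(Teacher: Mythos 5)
Your skeleton is the standard oscillation-decay argument (apply Theorem \ref{THM103} to $M_\rho-u$ and $u-m_\rho$, add, invoke the iteration lemma, and remove $\omega(R/2)$ via Theorem \ref{THM101}), and since the paper omits the proof as ``standard'' this is certainly the intended route; the algebra giving $\omega(\rho/2)\le\frac{C_H-1}{C_H+1}\omega(\rho)+C\rho^{2-n/s}\|f\|_{L^s(B_R)}$ is correct.

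However, the step you yourself single out as the crux is where the argument actually breaks, and your proposed fix does not repair it. The doubling bound $\fint_{B_\rho}\lambda^{-q}\,{\rm d}x\le (R/\rho)^n\fint_{B_R}\lambda^{-q}\,{\rm d}x$ only yields $\Lambda(B_\rho)\le (R/\rho)^{m}\Lambda(B_R)$ with $m=\max\{n(\tfrac1p+\tfrac1q),\tfrac{2n}{q}\}$, and this bound is attained (e.g.\ $\lambda(x)=|x|^{\beta}$ with $\beta q<n$, centered at the origin). The Harnack constant produced by Lemma \ref{THM104} is \emph{exponential} in $\Lambda$, $C_H(\rho)\le c_1e^{c_2\Lambda(B_\rho)}$, so along dyadic scales $\rho_k=2^{-k}R$ one gets $1-\gamma_0(\rho_k)=\frac{2}{C_H(\rho_k)+1}\gtrsim$ nothing better than $e^{-c\,2^{km}}$. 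Since $\sum_k e^{-c\,2^{km}}<\infty$, the product $\prod_k\gamma_0(\rho_k)$ converges to a \emph{positive} limit, and the iterated inequality gives no decay of $\omega$ at all --- not merely a worse H\"older exponent. Hence these factors cannot be ``absorbed into $C$ and, if needed, into a smaller $\alpha$''; a scale-dependent $\gamma_0\to1$ at this rate destroys the iteration. To close the argument you must have $\sup_{0<\rho\le R}\Lambda(B_\rho)<\infty$ for the concentric family (equivalently, read the constants $C,\alpha$ in the corollary as depending on this supremum rather than on $\Lambda(B_R)$ alone, or restrict to points where the averages of $\lambda^{-q}$ and $\mu^p$ stay bounded as $\rho\downarrow0$); this hypothesis must be made explicit or established, and as written your bookkeeping step fails.
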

\begin{remark}
	\rm Because $\Lambda\left(B_R\right)$ is not uniformly bounded for all ball $B_R\subset \Omega$, we only obtain this local oscillation result. Please refer to [\ref{TRU1}] for corresponding example.
\end{remark}
A direct consequence of Corollary \ref{Cor01} is the following local H\"older continuity of $u$.
\begin{corollary}
	Let $u\in H^1\left(\Omega,{\bf{A}}\right)$ be a weak solution of \eqref{Equ01}. Suppose $f\in L^s\left(\Omega\right)$ for some $s>\frac{nq}{2q-n}$. Then $u\in C^\alpha_{loc}(\Omega)$ for $\alpha\in (0,1)$ depending on $\Lambda(B_R),~n,~p,~q,~s$. Moreover, for any $B_R\subset \Omega$, the following inequality holds
	\begin{align*}
	|u(x)-u(y)|\leq C\left(\frac{|x-y|}{R}\right)^\alpha\left\{\left(\frac{1}{R^n}\int_{B_R}u^\gamma {\rm d}x\right)^\frac{1}{\gamma}+R^{2-\frac{n}{s}}\|f\|_{L^s(B_R)}\right\},
	\end{align*}
	for any $x,~y \in B_\frac{R}{8}$, where $C=C(\Lambda(B_R),n,p,q)\geq 1$.
\end{corollary}

Our main results are established through the Moser iteration.
First, Theorem \ref{THM101} demonstrates the weak solutions to nonuniformly elliptic equations are bounded with optimal index to $f$. And we explain the relation between $\|u\|_{\infty}$ and the integrability of $\lambda^{-1}$, $\mu$ as well as $f$ by Example \ref{e1}-\ref{e3}.
Next, in Theorem \ref{THM102}, we obtain critical upper bound $\|f\|_{{s_0}}\log\left(\frac{\|f\|_{s}}{\|f\|_{{s_0}}}+1\right)$ for the weak solution $u$ of the Dirichlet problem, which is smaller than the traditional upper bound $\|f\|_{s}$. And then the ratio $\frac{\|f\|_{s}}{\|f\|_{{s_0}}}$ measures how much bigger the space $L^{s}(\Omega)$ is than the critical space norm $L^{s_0}(\Omega).$
Finally, the Harnack inequality is established in Theorem \ref{THM103}.

The rest of this paper is organized as follows. In Section 2, we provide some preliminary knowledge, including some definitions and useful lemmas.
In Section 3, we prove the Theorem \ref{THM101}, by utilizing a variation of the Moser iteration when $\frac{1}{p}+\frac{1}{q}<\frac{2}{n-1}$, and carefully modifying the argument from [\ref{BPMS3}].
Section 4 is dedication to proving Theorem \ref{THM102}.
Our proof is a generalization of the argument in [\ref{XU}], and it requires us to address several technical obstacles.
Section 5 contains the proof of Theorem \ref{THM103}.
Finally, in Section 6, we establish Example \ref{e1}.
In a sense, Example \ref{e2}, \ref{e3} can be considered as variants of Example \ref{e1}.

\section{Preliminaries}
In this section, we introduce some notations and lemmas that will be used throughout the paper. In what follows,  we denote by $\|\cdot\|_s~(s\ge 1)$ the usual norm in $L^s(\Omega)$. And $C$ denotes a generic positive constant, which may differ at each appearance, but whose value
depends only on the underlying parameters. If we want to specify this dependence, we will write,
for instance, $C(n,s)$, etc.

The spaces $H^{1}_0(\Omega,{\bf{A}})$ and $H^1(\Omega,{\bf{A}})$ are,  respectively, defined as the completion of $C^\infty_0(\Omega)$ and $C^\infty(\Omega)$ with respect to the norm $$\|\cdot\|_{H^1(\Omega,{\bf{A}})}:=\left(\mathcal{B}(\cdot,\cdot)\right)^\frac{1}{2},$$
	where
	$$\mathcal{B}(u,v):=\int_{\Omega}{\bf{A}}\cdot\nabla u\cdot\nabla v {\rm d}x+\int_{\Omega}\mu u v {\rm d}x,$$

For the related properties of the spaces $H^1(\Omega,{\bf{A}})$ and $H^1_0(\Omega,{\bf{A}})$, please refer to [\ref{TRU1}, \ref{TRU2}]. Here, we will only recall the following chain rule.
\begin{lemma}$^{[\ref{TRU2}]}$\label{lian}
	{\rm Let $F:\mathbb{R}\rightarrow \mathbb{R}$ be uniformly Lipschitz-continuous with $F(0)=0$.  Then $u\in H^1_0(\Omega,{\bf{A}})$ (or $\in H^1(\Omega,{\bf{A}}))$ implies $F(u)\in H^1_0(\Omega,{\bf{A}})$ (or $\in H^1(\Omega,{\bf{A}})$), and it holds that $\nabla_x F=F^\prime(u)\nabla_x u$ a.e. $\Omega$}.
\end{lemma}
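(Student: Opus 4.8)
The plan is to reduce the assertion to the classical ($C^{1}$) chain rule by a two-step approximation: first approximate $u$ by smooth functions in $H^{1}(\Omega,{\bf A})$, then approximate the Lipschitz map $F$ by its mollifications. For a vector field $w$ I abbreviate $\|w\|_{{\bf A}}^{2}:=\int_{\Omega}{\bf A}\,w\cdot w\,{\rm d}x$; this is a seminorm obeying the triangle inequality, by Cauchy--Schwarz for the nonnegative form ${\bf A}(x)$. I first record two elementary consequences of (H2): since $\lambda^{-1}\in L^{q}(\Omega)$ we have $\lambda>0$ a.e., hence $\mu\geq\lambda>0$ a.e.; and since $\mu\in L^{p}(\Omega)$ with $p>1$ and $\Omega$ is bounded, $\int_{\Omega}\mu\,{\rm d}x<\infty$. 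I also use the standard Sobolev fact that if $N\subset\mathbb{R}$ is Lebesgue-null then $\nabla u=0$ a.e.\ on $u^{-1}(N)$; this renders $F'(u)\nabla u$ unambiguous (set equal to $0$ where $\nabla u=0$) and is the device for handling $F$ that is merely Lipschitz.

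\emph{Step 1 (smooth $F$).} Suppose first $F\in C^{1}(\mathbb{R})$ with $\|F'\|_{\infty}\leq L$ and $F(0)=0$. I would pick $u_{k}\in C^{\infty}(\Omega)$ (resp.\ $C_{0}^{\infty}(\Omega)$) with $u_{k}\to u$ in $H^{1}(\Omega,{\bf A})$; since $\int_{\Omega}\mu(u_{k}-u)^{2}\,{\rm d}x\to0$ and $\mu>0$ a.e., after passing to a subsequence $u_{k}\to u$ a.e. Put $v_{k}:=F(u_{k})\in C^{\infty}(\Omega)$ (resp.\ $C_{0}^{\infty}(\Omega)$, compact support being preserved since $F(0)=0$), so $\nabla v_{k}=F'(u_{k})\nabla u_{k}$ classically. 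From $|v_{k}-F(u)|\leq L|u_{k}-u|$ one gets $\int_{\Omega}\mu(v_{k}-F(u))^{2}\,{\rm d}x\to0$; and writing $\nabla v_{k}-F'(u)\nabla u=F'(u_{k})(\nabla u_{k}-\nabla u)+(F'(u_{k})-F'(u))\nabla u$, the bound $|F'(u_{k})|\leq L$ controls the first term by $L^{2}\|\nabla u_{k}-\nabla u\|_{{\bf A}}^{2}\to0$, while for the second the integrand $(F'(u_{k})-F'(u))^{2}\,{\bf A}\nabla u\cdot\nabla u\to0$ a.e.\ (continuity of $F'$) and is dominated by $4L^{2}\,{\bf A}\nabla u\cdot\nabla u\in L^{1}(\Omega)$, so dominated convergence yields $\|\nabla v_{k}-F'(u)\nabla u\|_{{\bf A}}\to0$. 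Thus $v_{k}\to F(u)$ in $H^{1}(\Omega,{\bf A})$; as this space (resp.\ $H^{1}_{0}(\Omega,{\bf A})$) is complete and is the closure of the smooth (resp.\ smooth compactly supported) functions, $F(u)$ lies in it with $\nabla F(u)=\lim_{k}\nabla v_{k}=F'(u)\nabla u$.

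\emph{Step 2 (general Lipschitz $F$).} Now let $F$ be uniformly Lipschitz with constant $L$ and $F(0)=0$. I would mollify: with $\rho_{\varepsilon}$ a standard mollifier set $F^{(\varepsilon)}:=F*\rho_{\varepsilon}-(F*\rho_{\varepsilon})(0)$, so $F^{(\varepsilon)}\in C^{\infty}(\mathbb{R})$, $F^{(\varepsilon)}(0)=0$, $\|(F^{(\varepsilon)})'\|_{\infty}\leq L$, $F^{(\varepsilon)}\to F$ uniformly on $\mathbb{R}$ (uniform continuity of $F$), and $(F^{(\varepsilon)})'=F'*\rho_{\varepsilon}\to F'$ at every Lebesgue point of $F'$, i.e.\ a.e.\ on $\mathbb{R}$. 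By Step 1, $F^{(\varepsilon)}(u)\in H^{1}(\Omega,{\bf A})$ (resp.\ $H^{1}_{0}$) with $\nabla F^{(\varepsilon)}(u)=(F^{(\varepsilon)})'(u)\nabla u$. Letting $\varepsilon\to0$: $|F^{(\varepsilon)}(u)-F(u)|\leq\|F^{(\varepsilon)}-F\|_{\infty}$ together with $\int_{\Omega}\mu\,{\rm d}x<\infty$ gives $\int_{\Omega}\mu(F^{(\varepsilon)}(u)-F(u))^{2}\,{\rm d}x\to0$; and $(F^{(\varepsilon)})'(u)\nabla u\to F'(u)\nabla u$ a.e.\ in $\Omega$ --- trivially on $\{\nabla u=0\}$, while on $\{\nabla u\neq0\}$ the null-set fact forces $u(x)$ to be a Lebesgue point of $F'$ for a.e.\ such $x$, whence $(F^{(\varepsilon)})'(u(x))\to F'(u(x))$ --- and this convergence is dominated by $4L^{2}\,{\bf A}\nabla u\cdot\nabla u\in L^{1}(\Omega)$ after pairing with ${\bf A}$. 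Hence $\|\nabla F^{(\varepsilon)}(u)-F'(u)\nabla u\|_{{\bf A}}\to0$, so $F^{(\varepsilon)}(u)\to F(u)$ in $H^{1}(\Omega,{\bf A})$; the space being complete, $F(u)$ belongs to it and $\nabla F(u)=F'(u)\nabla u$ a.e.

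The principal difficulty is the composition step in Step 2: since $F$ is merely Lipschitz, $F'$ and the approximations $(F^{(\varepsilon)})'\to F'$ are defined only a.e.\ on $\mathbb{R}$, so a priori $(F^{(\varepsilon)})'(u(x))$ need not converge, as the range of $u$ could be concentrated on the exceptional null set; this is dissolved precisely by the fact that $\nabla u=0$ a.e.\ on the preimage of a null set, which confines the trouble to $\{\nabla u=0\}$, where the relevant integrands vanish identically. A secondary, more bookkeeping matter is that one works in the abstract completion $H^{1}(\Omega,{\bf A})$: that ${\bf A}\,w\cdot w$ defines a seminorm with the triangle inequality, that this completion is a Banach space in which the limits above are legitimate, and that it may be identified with an honest function space so that the limit genuinely ``is'' $F(u)$ with gradient $F'(u)\nabla u$, are the structural facts recorded in [\ref{TRU1}, \ref{TRU2}], which I would cite rather than reprove.
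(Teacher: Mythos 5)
Your proof is correct, and it is essentially the classical double-approximation argument (smooth out $u$, then smooth out $F$ by mollification) that underlies the cited result; the paper itself offers no proof of Lemma \ref{lian}, deferring entirely to [\ref{TRU2}], so your write-up supplies an argument the paper omits. Two points deserve to be made explicit rather than deferred. First, the ``standard Sobolev fact'' that $\nabla u=0$ a.e.\ on $u^{-1}(N)$ for null $N$ is a statement about $W^{1,1}$ functions, so you should record why an element of $H^{1}(\Omega,{\bf A})$ is one: by H\"older, $\int_{\Omega}|\nabla u|^{\frac{2q}{q+1}}\,{\rm d}x\leq\|\lambda^{-1}\|_{q}^{\frac{q}{q+1}}\left(\int_{\Omega}\lambda|\nabla u|^{2}\,{\rm d}x\right)^{\frac{q}{q+1}}$ and $\int_{\Omega}|u|\,{\rm d}x\leq\left(\int_{\Omega}\mu u^{2}\,{\rm d}x\right)^{\frac12}\left(\int_{\Omega}\lambda^{-1}\,{\rm d}x\right)^{\frac12}$, both finite under (H2); this is also what makes the identification of the abstract completion with a genuine function space (your ``bookkeeping matter'') legitimate. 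Second, in Step 1 as stated $F\in C^{1}$ gives $v_{k}=F(u_{k})$ only of class $C^{1}$, not $C^{\infty}$, so $v_{k}$ is not literally an admissible approximant in the completion; this is harmless because Step 2 only ever invokes Step 1 for the mollified $F^{(\varepsilon)}\in C^{\infty}$, but it is worth saying so. With these remarks the argument is complete and self-contained.
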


Next, we introduce the definition of weak solution (subsolution,  supersolution) of \eqref{Equ01}.
\begin{definition}
	The function $u$ is said to be a weak solution (subsolution,  supersolution) of \eqref{Equ01} in $\Omega$ if and only if $u\in H^1(\Omega,{\bf{A}})$ and
	\begin{align}\label{def1}
\mathcal{B}(u,\phi)=\int_{\Omega}f \phi {\rm d}x ~(\leq 0,~\geq 0),  	~~\forall \phi \ge 0\in H^1_0(\Omega,{\bf{A}}).
	\end{align}
	Moreover, $u$ is said to be a local weak solution of \eqref{Equ01} in $\Omega$ if and only if $u$ is a weak solution of \eqref{Equ01} in $\Omega^\prime$ for any bounded open set $\Omega^\prime\Subset \Omega$.
\end{definition}
\begin{remark}
	{\rm Since $f\in L^s(\Omega),~s>\frac{nq}{2q-n}$, the right-hand side  of \eqref{def1} is valid according to H\"older's inequality and Sobolev inequality.}
\end{remark}

At the end of this section, we introduce another important lemma, which was first proposed in [\ref{BPMS3}], and provided improved Caccioppoli inequality by selecting the minimum points of the functional $J$ and applying the Sobolev inequality on spheres. For a detailed proof, please refer to [\ref{BPMS3}, Lemma 2.1].
\begin{lemma}$^{[\ref{BPMS3}]}$\label{lem2.1}
	Fix $n\geq 2$, and $p\geq 1$ satisfy $p>\frac{n-1}{2}$ if $n\geq 3$.
	For $0<\rho<\sigma<+\infty$, let $v\in W^{1,p^*}(B_\sigma)$ with $\frac{1}{p^*}=\min\{\frac{1}{2}-\frac{1}{2p}+\frac{1}{n-1},1\}$ and $\mu\in L^p(B_\sigma)$, $\mu\geq 0$ satisfying $\mu v^2\in L^1(B_\sigma)$. Consider
	\begin{align}
	J(\rho,\sigma,v):=\inf\left\{\int_{B_\sigma}\mu v^2|\nabla\eta|^2{\rm d}x~\bigg|~ \eta\in C^1_0(B_\sigma),~\eta\geq 0,~\eta\equiv1 \text{ in }B_\rho\right\}.
	\end{align}
	Then there exists $c=(n,p)\in[1,+\infty)$ such that
	\begin{align}
	J(\rho,\sigma,v)\leq c(\sigma-\rho)^{-\frac{2n}{n-1}}\|\mu\|_{L^p(B_\sigma\setminus B_\rho)}\left(\|\nabla v\|_{L^{p^*}(B_\sigma\setminus B_\rho)}^2+\rho^{-2}\|v\|_{L^{p^*}(B_\sigma\setminus B_\rho)}^2\right).
	\end{align}
\end{lemma}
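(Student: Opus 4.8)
The plan is to estimate $J(\rho,\sigma,v)$ from above by inserting an explicit \emph{radial} competitor $\eta$, which reduces the minimization to a one‑dimensional variational problem, and then to bound that one‑dimensional quantity by combining Hölder's inequality on the spheres $\partial B_r$ with the Sobolev inequality in dimension $n-1$. The whole point of the particular exponent $p^*$ is to make this Sobolev step scale‑invariant.

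First I would restrict the admissible class in the definition of $J$ to radial cutoffs $\eta(x)=\phi(|x|)$ with $\phi\in C^1([0,\infty))$, $\phi\equiv1$ on $[0,\rho]$, $\phi\equiv0$ near $\sigma$, so that $|\nabla\eta(x)|=|\phi'(|x|)|$ and, by integration in polar coordinates,
\begin{align*}
\int_{B_\sigma}\mu v^2|\nabla\eta|^2\,{\rm d}x=\int_\rho^\sigma|\phi'(r)|^2 g(r)\,{\rm d}r,\qquad g(r):=\int_{\partial B_r}\mu v^2\,{\rm d}\mathcal{H}^{n-1}.
\end{align*}
Since $\mu v^2\in L^1(B_\sigma)$, $g$ is finite a.e.; minimizing $\int_\rho^\sigma|\phi'|^2g$ over $\phi(\rho)=1,\ \phi(\sigma)=0$ is an elementary Cauchy–Schwarz problem with optimal profile $\phi'\propto-1/g$, which is approximated by genuine $C^1_0$ profiles (and if $\int_\rho^\sigma g^{-1}=+\infty$ then $J=0$ and there is nothing to prove). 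This gives
\begin{align*}
J(\rho,\sigma,v)\le\Big(\int_\rho^\sigma\frac{{\rm d}r}{g(r)}\Big)^{-1}.
\end{align*}
Next, for a.e.\ $r$ the restriction $v|_{\partial B_r}$ lies in $W^{1,p^*}(\partial B_r)$; Hölder on $\partial B_r$ with exponents $p,\ p'=\frac{p}{p-1}$ gives $g(r)\le\|\mu\|_{L^p(\partial B_r)}\|v\|_{L^{2p'}(\partial B_r)}^2$. The definition of $p^*$ is exactly such that $\frac{1}{2p'}=\frac{1}{p^*}-\frac{1}{n-1}$, i.e.\ $2p'$ is the Sobolev conjugate of $p^*$ in dimension $n-1$; since $n\ge3$ and $p>\frac{n-1}{2}$ one checks $1<p^*<n-1$, so $W^{1,p^*}(\partial B_r)\hookrightarrow L^{2p'}(\partial B_r)$ holds, and rescaling from $\partial B_1$ to track the $r$‑powers yields the scale‑invariant form $\|v\|_{L^{2p'}(\partial B_r)}\le C(n,p)\big(\|\nabla v\|_{L^{p^*}(\partial B_r)}+r^{-1}\|v\|_{L^{p^*}(\partial B_r)}\big)$. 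Abbreviating $A(r):=\|\mu\|_{L^p(\partial B_r)}$ and $h(r):=\|\nabla v\|_{L^{p^*}(\partial B_r)}+r^{-1}\|v\|_{L^{p^*}(\partial B_r)}$, this reads $g(r)\le C\,A(r)h(r)^2$.

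It then remains to lower‑bound $\int_\rho^\sigma(Ah^2)^{-1}{\rm d}r$. Hölder in $r$ with exponents $3,\tfrac32$ gives $\int_\rho^\sigma (Ah^2)^{-1}\,{\rm d}r\ge(\sigma-\rho)^3\big(\int_\rho^\sigma A^{1/2}h\,{\rm d}r\big)^{-2}$. Bounding $r^{-1}\le\rho^{-1}$ on $[\rho,\sigma]$ and applying Hölder in $r$ with exponents $2p,\ p^*,\ \kappa$ where $\tfrac1\kappa=1-\tfrac1{2p}-\tfrac1{p^*}=\tfrac12-\tfrac1{n-1}=\tfrac{n-3}{2(n-1)}\ge0$, together with the polar identities $\int_\rho^\sigma A^p\,{\rm d}r=\|\mu\|^p_{L^p(B_\sigma\setminus B_\rho)}$ and $\int_\rho^\sigma\|\nabla v\|_{L^{p^*}(\partial B_r)}^{p^*}\,{\rm d}r=\|\nabla v\|^{p^*}_{L^{p^*}(B_\sigma\setminus B_\rho)}$ (likewise for $v$), one obtains $\int_\rho^\sigma A^{1/2}h\,{\rm d}r\le C\|\mu\|^{1/2}_{L^p(B_\sigma\setminus B_\rho)}(\sigma-\rho)^{\frac{n-3}{2(n-1)}}\big(\|\nabla v\|_{L^{p^*}(B_\sigma\setminus B_\rho)}+\rho^{-1}\|v\|_{L^{p^*}(B_\sigma\setminus B_\rho)}\big)$. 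Chaining these and using $(a+b)^2\le2(a^2+b^2)$ gives
\begin{align*}
J(\rho,\sigma,v)\le\Big(\int_\rho^\sigma\frac{{\rm d}r}{g}\Big)^{-1}\le\frac{C\,\|\mu\|_{L^p(B_\sigma\setminus B_\rho)}\big(\|\nabla v\|_{L^{p^*}(B_\sigma\setminus B_\rho)}^2+\rho^{-2}\|v\|_{L^{p^*}(B_\sigma\setminus B_\rho)}^2\big)}{(\sigma-\rho)^{\,3-\frac{n-3}{n-1}}},
\end{align*}
and $3-\tfrac{n-3}{n-1}=\tfrac{2n}{n-1}$, which is the assertion with $c=c(n,p)$.

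The heart of the matter — and the step I expect to be the main obstacle — is precisely this exponent bookkeeping: the argument closes only because the powers of $(\sigma-\rho)$ combine to exactly $-\tfrac{2n}{n-1}$, and this is what dictates the definition of $p^*$ (so that $2p'$ is the $(n-1)$‑dimensional Sobolev conjugate) and forces $n\ge3$ (so that $\kappa\ge0$ above). For $n=2$ one has $p^*=1$, $\partial B_r$ is a circle, and the subcritical embedding degenerates; there one must instead use $W^{1,1}(\partial B_r)\hookrightarrow L^\infty(\partial B_r)$ in the scale‑invariant form $\|v\|_{L^\infty(\partial B_r)}\le C\big(\|\nabla v\|_{L^1(\partial B_r)}+r^{-1}\|v\|_{L^1(\partial B_r)}\big)$, interpolate down to $L^{2p'}$, and rerun the one‑dimensional estimates — this borderline case is the only part requiring additional care (cf.\ [\ref{BPMS3}, Lemma 2.1]).
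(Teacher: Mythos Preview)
Your proposal is correct and reproduces the argument of Bella--Sch\"affner to which the paper defers (the paper itself does not prove this lemma but cites [\ref{BPMS3}, Lemma~2.1]). The key ingredients---restriction to radial cutoffs to obtain $J\le(\int_\rho^\sigma g^{-1})^{-1}$, H\"older plus the $(n-1)$-dimensional Sobolev inequality on spheres to bound $g(r)\le C\,A(r)h(r)^2$, and the two H\"older steps in $r$ that produce exactly the exponent $-\tfrac{2n}{n-1}$---match the cited source, including your remark on the borderline case $n=2$.
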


\section{Proof of Theorem \ref{THM101}}

To better understand the outline of our proof, we will divide this proof into three steps.

{\bf Step 1. Establish the estimate of subsolution for $ \theta=\frac{1}{4}$, $ R=1$ and $\gamma= 2p^\prime$,~where $ p^\prime=\frac{p}{p-1}$.}
i.e., if $u$ is subsolution of \eqref{Equ01} for $\gamma= 2p^\prime$, then we claim that
\begin{align}
\|u_+\|_{L^\infty(B_\frac{1}{4})}\leq C \left(\Lambda(B_1)\right)^{\frac{\delta}{2(\delta-1)}}\left[\left(\int_{B_1}|u_+|^{2p^\prime} {\rm d}x\right)^\frac{1}{2p^\prime}+\|f\|_{L^s(B_1)}\right].
\end{align}

To do this, we first introduce the definition of some test function. For $\beta\geq 1$ and $m\in (0,+\infty)$, we define
\begin{align*}
\phi_m=\begin{cases}
(u_++k)^\beta-k^\beta,~&u\leq m,\\
\beta(m+k)^{\beta-1}(u-m)+(m+k)^\beta-k^\beta,~&u>m,
\end{cases}
\end{align*}
where $k:=\|f\|_{L^s(B_1)}$.
Set $\phi=\phi_m\eta^2$, where $\eta(x)\geq 0,~\eta\in C^\infty_0(B_1)$, with $\eta\equiv 1$ for $x\in B_\rho$; $\eta\equiv 0$ for $x\in B_1\setminus B_\sigma$, where $\frac{1}{4}\leq \rho<\sigma\leq \frac{1}{2}$.
Using \eqref{def1}, we obtain
\begin{align}\label{fev}
\int_{\Omega}{\bf{A}}(x)\nabla u\nabla &\left(\phi_m\eta^2\right) {\rm d}x \leq \int_{\Omega}f \eta^2\phi_m {\rm d}x.
\end{align}
Then, by means of \eqref{fev}, the conditions (H1)-(H2), Young's inequality, and convexity of $\phi_m$ in the form of $\phi_m\leq (u_++k)\phi^\prime_m$, we get
\begin{align}\label{qw}
\int_{\Omega}\left({\bf{A}}(x)\nabla u_+\nabla u_+\right)\phi^\prime_m\eta^2 {\rm d}x
&\leq \frac{1}{2}\int_{\Omega}{\bf{A}}(x)\nabla u_+\nabla u_+\phi^\prime_m\eta^2 {\rm d}x 
\nonumber\\
&\quad+8\int_{\Omega}\mu |\nabla \eta|^2(u_++k)^2\phi^\prime_m {\rm d}x +\int_{\Omega}f \eta^2\phi_m {\rm d}x .
\end{align}
Once again, using the elliptic condition ${\bf{A}}(x)\nabla u_+\nabla u_+\geq \lambda(x)|\nabla u_+|^2$, it follows from \eqref{qw} that
\begin{align}\label{zo}
\int_{\Omega}\lambda|\nabla u_+|^2\phi^\prime_m\eta^2 {\rm d}x
\leq 16 \int_{\Omega}\mu|\nabla \eta|^2(u_++k)^2 \phi^\prime_m {\rm d}x
+2\int_{\Omega}f \eta^2 \phi_m {\rm d}x.
\end{align}
Imposing H\"older's inequality on the last term of \eqref{zo}, we obtain
\begin{align}\label{dai1}
\int_{B_\sigma} f \phi_m \eta^2 {\rm d}x
\leq \int_{B_\sigma}\frac{|f|}{k}(u_++k)\phi_m\eta^2 {\rm d}x
  \leq  \left(\int_{B_\sigma}
\left|(u_++k)^\frac{1}{2}\phi_m^\frac{1}{2}\eta\right|^{2s^\prime}{\rm d}x\right)^\frac{1}{s^\prime},
\end{align}
where $s'=\frac{s}{s-1}.$

Next, we estimate the right-hand side of \eqref{dai1}. Actually, $q>\frac{n}{2}$ implies $p^*<2s^\prime=\frac{2s}{s-1}<q^\ast:=\frac{2nq}{(q+1)n-2q}$. And then to combine Sobolev inequality and interpolation inequality yields the following inequalities 
\begin{align}\label{dai2}
\left(\int_{B_\sigma }\left|(u_++k)^\frac{1}{2}\phi_m^\frac{1}{2}\eta\right|^{2s^\prime}{\rm d}x\right)^\frac{1}{s^\prime}\nonumber
&\leq \varepsilon \left(\int_{B_\sigma}\left|(u_++k)^\frac{1}{2}\phi_m^\frac{1}{2}\eta\right|^{q^*}{\rm d}x\right)^\frac{2}{q^*} \\
&\quad+\frac{1}{\varepsilon}\left(\int_{B_\sigma}\left|(u_++k)^\frac{1}{2}\phi_m^\frac{1}{2}\eta\right|^{p^*} {\rm d}x\right)^\frac{2}{p^*}\nonumber\\
&\leq \varepsilon\left(\int_{B_\sigma}\left|\nabla\left((u_++k)^\frac{1}{2}\phi_m^\frac{1}{2}\eta\right)\right|^\frac{2q}{q+1} {\rm d}x \right)^\frac{q+1}{q}\nonumber\\
&\quad+\frac{1}{\varepsilon}\left(\int_{B_\sigma}\left|(u_++k)^\frac{1}{2}\phi_m^\frac{1}{2}\eta\right|^{p^*} {\rm d}x\right)^\frac{2}{p^*},
\end{align}
for sufficiently small $\varepsilon$ (which will be determined later). Furthermore, keeping H\"older's inequality and Cauchy-Schwarz inequality in mind, we estimate the first term of the right-hand side of \eqref{dai2} as follows
\begin{align}\label{dai3}
\left(\int_{B_\sigma}\left|\nabla\left((u_++k)^\frac{1}{2}\phi_m^\frac{1}{2}\eta\right)\right|^{\frac{2q}{q+1}} {\rm d}x\right)^\frac{q+1}{q}&\leq \left\|\lambda^{-1}\right\|_{L^q(B_\sigma)}\left(\int_{B_\sigma}\lambda\left|\nabla \left((u_++k)^\frac{1}{2}\phi_m^\frac{1}{2}\eta\right)\right|^{2} {\rm d}x\right)\nonumber\\
&\leq \left\|\lambda^{-1}\right\|_{L^q(B_\sigma)}\left(2\int_{B_\sigma}\lambda \left|\nabla \left((u_++k)^\frac{1}{2}\phi^\frac{1}{2}_m\right)\right|^2\eta^2  {\rm d}x \right.\notag\\
&\quad\left.+\;2\int_{B_\sigma}\lambda |\nabla \eta|^2(u_++k)\phi_m {\rm d}x\right)\nonumber\\
&\leq  \left\|\lambda^{-1}\right\|_{L^q(B_\sigma)}\left(2\int_{B_\sigma}\lambda \left|\nabla \left((u_++k)^\frac{1}{2}\phi^\frac{1}{2}_m\right)\right|^2\eta^2  {\rm d}x \right.\notag\\
&\quad\left.+\;2\int_{B_\sigma}\mu |\nabla \eta|^2(u_++k)\phi_m {\rm d}x\right).
\end{align}
By applying H\"older's inequality and  substituting \eqref{dai1}, \eqref{dai2}, \eqref{dai3} into \eqref{zo}, we conclude
\begin{align}\label{rcaci}
\left(\int_{B_\rho}\left|\eta\nabla G_m(u)\right|^\frac{2q}{q+1} {\rm d}x \right)^\frac{q+1}{q}
&\leq \left\|\lambda^{-1}\right\|_{L^q(B_\sigma)}\int_{B_\sigma}\lambda|\nabla G_m(u)|^2\eta^2 {\rm d}x\nonumber\\
&\leq  C\left\|\lambda^{-1}\right\|_{L^q(B_\sigma)} \int_{B_\sigma}\mu|\nabla \eta|^2(u_++k)^2\phi^\prime_m {\rm d}x \nonumber\\ 
&\quad+C\left\|\lambda^{-1}\right\|_{L^q(B_\sigma)}^2\left(\int_{B_\sigma}\left|(u_++k)^2\phi^\prime_m\eta\right|^\frac{p^*}{2} {\rm d}x\right)^\frac{2}{p^*},
\end{align}
where $G_m(u)=\displaystyle \int_{0}^{u_+}\left|\phi^\prime_m(s)\right|^\frac{1}{2} {\rm d}
s$.

To optimize the first term on the right-hand side of  \eqref{rcaci} with respect to $\eta$, we need to verify that $\mu(u_++k)^2\phi^\prime_m$ and $(u_++k)\left(\phi^\prime_m\right)^\frac{1}{2}$ satisfy the conditions of Lemma \ref{lem2.1}. Indeed, it is not hard to find $\mu(u_++k)^2\phi^\prime_m \in L^1(B_\sigma)$ and $(u_++k)\left(\phi^\prime_m\right)^\frac{1}{2}\in W^{1,p^*}(B_\sigma)$ by virtue of $u\in H^1(\Omega,{\bf{A}})$, $p^*<\frac{2q}{q+1}$ as well as the properties of $\phi_m^\prime$. Further Lemma \ref{lem2.1} shows 
\begin{align}\label{eta}
\inf_{\eta}\left\{\int_{B_\sigma}\mu\left|\nabla \eta\right|^2(u_++k)^2\phi^\prime_m{\rm d}x\right\}
\leq C   (\sigma-\rho)^{-\frac{2n}{n-1}}\|\mu\|_{L^p(B_\sigma)}\left\|(u_++k)\left(\phi^\prime_m\right)^\frac{1}{2}\right\|^2_{W^{1,p^*}(B_\sigma)}.
\end{align}
On the other hand, when $m$ goes to infinity, we have
\begin{align*}
&\varliminf\limits_{m\rightarrow +\infty}|\nabla G_m(u)|^2= \varliminf\limits_{m\rightarrow +\infty} \left|G^\prime_m(u)\right|^2|\nabla u_+|^2 \geq\; \beta (u_++k)^{\beta-1}|\nabla u_+|^2~a.e. ,\\
&(u_++k)^2\phi^\prime_m \leq\;\beta(u_++k)^{\beta+1},~~\forall m\in [1,+\infty).
\end{align*}
Obviously, to use \eqref{rcaci}, \eqref{eta} and $\eta\equiv1$ in $B_\rho$ derives the following inequality 
\begin{align}\label{cdiedai}
\left(\int_{B_\rho}\left|\nabla \left((u_++k)^\frac{\beta+1}{2}\right)\right|^\frac{2q}{q+1}{\rm d}x\right)^\frac{q+1}{2q}&\leq C (\beta+1)(\sigma-\rho)^\frac{-n}{n-1}\left(\Lambda(B_\sigma)\right)^\frac{1}{2}\notag\\
&\quad\times\left\|(u_++k)^\frac{\beta+1}{2}\right\|_{W^{1,p^*}(B_\sigma)}.
\end{align}
To make Moser iteration argument work, we further prove that there exists $\delta>1$ such that the term $\|\left(u_++k\right)^\frac{(\beta+1)\delta}{2}\|_{W^{1,p^*}(B_\rho)}$ may be dominated by $\|(u_++k)^{\frac{\beta+1}{2}}\|_{W^{1,p^*}(B_\sigma)}$. Indeed, on the one hand, choose $\delta=2-\frac{1}{\chi}>1$ with $\chi=\frac{2q}{(q+1)p^*}>1$, and  apply H\"older's inequality with exponent $\frac{2q}{(q+1)p^*}$  to have 
\begin{align}\label{dao}
\left(\int_{B_\rho}\left|\nabla\left((u_++k)^{\frac{\beta+1}{2}\delta}\right)\right|^{p^*} {\rm d}x\right)^\frac{1}{p^*}
&\leq \left(\int_{B_\rho}\left|\delta(u_++k)^{\frac{\beta+1}{2}(\delta-1)}\nabla \left((u_++k)^\frac{\beta+1}{2}\right)\right|^{p^*}{\rm d}x\right)^\frac{1}{p^*}\nonumber\\
&\leq \delta\left(\int_{B_\rho}\left|\nabla \left((u_++k)^\frac{\beta+1}{2}\right)\right|^\frac{2q}{q+1}{\rm d}x\right)^\frac{q+1}{2q}\notag\\
&\quad\times\left(\int_{B_\rho}\left|(u_++k)^\frac{\beta+1}{2}\right|^{p^*} {\rm d}x\right)^\frac{\delta-1}{p^*}.
\end{align}
On the other hand, with the aid of Sobolev inequality $W^{1,p^*}(B_\rho)\hookrightarrow L^t(B_\rho)$ where $1\leq t\leq \frac{np^*}{n-p^*}$, we have 
\begin{align}\label{qia}
\left(\int_{B_\rho}(u_++k)^{\frac{\beta+1}{2}\delta p^*} {\rm d}x\right)^\frac{1}{\delta p^*}
\leq C \left\|(u_++k)^\frac{\beta+1}{2}\right\|_{W^{1,p^*}(B_\rho)}.
\end{align}
Thus, combining \eqref{cdiedai}, \eqref{dao} with \eqref{qia},  we have
\begin{align}\label{fg}
\left\|(u_++k)^{\frac{\beta+1}{2}\delta}\right\|_{W^{1,p^*}(B_\rho)}\leq C (\beta+1)\left(\Lambda(B_1)\right)^\frac{1}{2}(\sigma-\rho)^\frac{-n}{n-1}\left\|(u_++k)^\frac{\beta+1}{2}\right\|_{W^{1,p^*}(B_\sigma)}^\delta.
\end{align}
Set $\alpha=\frac{\beta+1}{2}\geq 1$, then  \eqref{fg} can be rewritten as
\begin{align}\label{ghj}
\left\|(u_++k)^{\alpha\delta}\right\|_{W^{1,p^*}(B_\rho)}^{\frac{1}{\delta\alpha}}\le \left(C\alpha\left( \Lambda(B_1)\right) ^\frac{1}{2}(\sigma-\rho)^{\frac{-n}{n-1}}\right)^\frac{1}{\delta\alpha}\left\|(u_++k)^\alpha\right\|_{W^{1,p^*}(B_\sigma)}^\frac{1}{\alpha}.
\end{align}
We further choose $\alpha=\delta^m$, $\rho_m=\frac{1}{4}+\left(\frac{1}{4}\right)^{m+1},  m\in\mathbb{N}$, and deduce the following recurrence relations for the function $u_{+}+k$
\begin{align}
\left\|(u_++k)^{\delta^{m+1}}\right\|_{W^{1,p^*}(B_{\rho_{m+1}})}^{\frac{1}{\delta^{m+1}}}
&\le  \left(\left( \Lambda(B_1)\right) \right)^\frac{1}{2\delta^{m+1}}\delta^{\frac{m}{\delta^{m+1}}}\left(C4^\frac{n}{n-1}\right)^{\frac{m+1}{\delta^{m+1}}}\notag\\
&\quad\times\left\|(u_++k)^{\delta^m}\right\|_{W^{1,p^*}(B_{\rho_m})}^\frac{1}{\delta^m}.
\end{align}
By iteration, we find
\begin{align}\label{infty}
\|u_++k\|_{L^\infty(B_\frac{1}{4})}
\leq C \left[\left( \Lambda(B_1)\right) \right]^{\sum_{i=0}^{\infty}\frac{1}{2\delta^{i+1}}}\left(4^{\frac{-n}{n-1}}\delta\right)^{\sum_{i=0}^{\infty}\frac{i+1}{\delta^{i+1}}}\|(u_++k)\|_{W^{1,p^*}(B_\frac{1}{2})}.
\end{align}
Subsequently, we estimate the right-hand side of \eqref{infty}.
Using \eqref{rcaci} with $\beta=1, ~\rho=\frac{1}{2}, ~\sigma=1$, the fact $p^*<\frac{2q}{q+1}\leq 2\leq 2p^\prime$, and $\|\nabla \eta\|_{L^\infty(B_\sigma)}\leq c(\sigma-\rho)^{-1}$, we obtain
\begin{align*}
\left(\int_{B_\frac{1}{2}}|\nabla (u_++k)|^{p^*} {\rm d}x\right)^\frac{1}{p^*}
&\leq \left(\int_{B_\frac{1}{2}}|\nabla (u_++k)|^\frac{2q}{q+1} {\rm d}x \right)^\frac{q+1}{2q}\nonumber\\
&\leq   \left( \Lambda(B_1)\right) ^\frac{1}{2}\left[\left(\int_{B_1} |u_++k|^{2p^\prime} {\rm d}x \right)^\frac{1}{2p^\prime}\right.\notag\\
&\quad\left.+\left(\int_{B_1}|u_++k|^{p^*} {\rm d}x \right)^\frac{1}{p^*}\right].
\end{align*}
And then by H\"older's inequality, we get
\begin{align*}
\left(\displaystyle \int_{B_1}|u_++k|^{p^*}{\rm d}x\right)^\frac{1}{p^*}\leq C \left(\displaystyle \int_{B_1}|u_++k|^{2p^\prime} {\rm d}x\right)^\frac{1}{2p^\prime}.
\end{align*}
In turn, combining the above inequalities with \eqref{infty}, we obtain
\begin{align}\label{st}
\|u_+\|_{L^\infty(B_\frac{1}{4})}&\leq C\left(\Lambda(B_1)\right)^\frac{\delta}{2(\delta-1)}\left(\int_{B_1}|u_++k|^{2p^\prime} {\rm d}x\right)^\frac{1}{2p^\prime}\nonumber\\
&\leq C\left( \Lambda(B_1)\right) ^\frac{\delta}{2(\delta-1)}\left[\left(\int_{B_1}|u_+|^{2p^\prime} {\rm d}x \right)^\frac{1}{2p^\prime}+\|f\|_{L^s(B_1)}\right],
\end{align}
which proves the claim.

{\bf Step 2. The general case.} Although the procedure is standard, we also give a sketch of our proof to make our article self-contained. For more details, we may refer to  [\ref{HL}]. First, let
$$\tilde{{\bf{A}}}(y)={\bf{A}}(Ry),~\tilde{\lambda}(y)=\lambda(Ry),~\tilde{\mu}(y)=\mu(Ry),~\tilde{u}(y)=u(Ry),~\tilde{f}(y)=R^2f(Ry),~\tilde{k}=\left\|\tilde{f}\right\|_{L^s(B_1)},$$
where $y\in B_1$.
We apply what we just proved to $\tilde{u}$ in $B_1$ and rewrite the result in terms of $u$, then for any $\gamma\geq 2p^\prime$, we derive
\begin{align}\label{re}
\|u_+\|_{L^\infty\left(B_{\frac{1}{4}R}\right)}\leq C\left(\Lambda(B_R)\right)^\frac{p^\prime\delta}{\gamma(\delta-1)}\left(R^{-\frac{n}{\gamma}}\left(\int_{B_R}|u_+|^{\gamma} {\rm d}x\right)^\frac{1}{\gamma}+R^{2-\frac{n}{s}}\|f\|_{L^s(B_R)}\right),
\end{align}
where $C=C(p,q,n,\gamma)\geq 1$.
Again, by the same argument as above, \eqref{re} clearly also holds for $B_R$ replaced by $B_{(1-\theta)R}(x_0)$, where $x_0\in B_{\theta R}$, we arrive at
\begin{align}\label{1}
\|u_+\|_{L^\infty\left(B_{\frac{1}{4}(1-\theta)R}(x_0)\right)} 
&\leq C\left[\Lambda\left(B_{(1-\theta)R}(x_0)\right)\right]^\frac{p^\prime\delta}{\gamma(\delta-1)}\left\{\left[(1-\theta)R\right]^{-\frac{n}{\gamma}}\left(\int_{B_{(1-\theta)R}(x_0)}|u_+|^{\gamma} {\rm d}x\right)^\frac{1}{\gamma}\right.\notag\\
&\quad\left.+\left[(1-\theta)R\right]^{2-\frac{n}{s}}\|f\|_{L^s(B_R)}\right\}\nonumber\\
&\leq C\left(\Lambda(B_R)\right)^\frac{p^\prime\delta}{\gamma(\delta-1)}\left[\frac{1}{(1-\theta)^{\frac{n}{\gamma}(m_*+1)}}R^{-\frac{n}{\gamma}}\left(\int_{B_R}|u_+|^{\gamma} {\rm d}x\right)^\frac{1}{\gamma}\right.\notag\\
&\quad\left.+\frac{1}{(1-\theta)^{\frac{n}{\gamma}m_*+\frac{n}{s}-2}}R^{2-\frac{n}{s}}\|f\|_{L^s(B_R)}\right],
\end{align}
where $m_*=\frac{p^\prime\delta}{\delta-1}\max\left\{\frac{1}{p}+\frac{1}{q},\frac{2}{q}\right\}$, $C=C(n,p,q,\gamma)$.

Next, we will be in position to prove \eqref{th1} for $\gamma\in (0,2p^\prime)$. First, we observe that
\begin{align}\label{wer}
\left(\int_{B_R}|u_+|^{2p^\prime} {\rm d}x\right)^\frac{1}{2p^\prime}
\leq   \|u_+\|_{L^\infty(B_R)}^{\frac{2p^\prime-\gamma}{2p^\prime}}\|u_+\|_{L^\gamma(B_R)}^\frac{\gamma}{2p^\prime}.
\end{align}
In addition, applying the Young's inequality to \eqref{wer}, and combining this with \eqref{1} for $\gamma=2p^\prime$, we obtain
\begin{align}\label{st}
\|u_+\|_{L^\infty(B_{\theta R})}&\leq  \frac{1}{2}\|u_+\|_{L^\infty(B_R)}+C\left(\Lambda(B_R)\right)^\frac{\delta p^\prime}{\gamma(\delta-1)}\frac{1}{(1-\theta)^{\frac{n}{\gamma}(m_*+1)}}R^{-\frac{n}{\gamma}}\|u_+\|_{L^\gamma(B_R)}\nonumber\\
&\quad +(\Lambda(B_R))^\frac{\delta}{2(\delta-1)}\frac{1}{(1-\theta)^{\frac{n}{\gamma}m_*+\frac{n}{s}-2}}R^{2-\frac{n}{s}}\|f\|_{L^s(B_R)},
\end{align}
where $C=C(n,p,q,\gamma)$.
Then, set $r:=\theta R~(\theta R<R\leq 1,~\theta\in (0,1))$ and apply  \eqref{st} to get
\begin{align}\label{lin}
\left\|u_+\right\|_{L^\infty(B_r)}
&\leq \frac{1}{2}\|u_+\|_{L^\infty(B_R)}+C\left( \Lambda(B_R)\right) ^\frac{\delta p^\prime}{\gamma(\delta-1)}\frac{1}{\left(1-\frac{r}{R}\right)^{\frac{n}{\gamma}(m_*+1)}}R^{-\frac{n}{\gamma}}\|u_+\|_{L^\gamma(B_1)}\nonumber\\
&\quad+C(\Lambda(B_R))^\frac{\delta}{2(\delta-1)}\frac{1}{\left(1-\frac{r}{R}\right)^{\frac{n}{\gamma}m_*+\frac{n}{s}-2}}R^{2-\frac{n}{s}}\|f\|_{L^s(B_1)}\nonumber\\
&\leq\frac{1}{2}\|u_+\|_{L^\infty(B_R)}+C\left( \Lambda(B_1)\right) ^\frac{\delta p^\prime}{\gamma(\delta-1)}\frac{1}{(R-r)^{\frac{n}{\gamma}(m_*+1)}}\|u_+\|_{L^\gamma(B_1)}\notag\\
&\quad+C\left( \Lambda(B_1)\right) ^\frac{\delta}{2(\delta-1)}\frac{1}{(R-r)^{\frac{n}{\gamma}m_*+\frac{n}{s}-2}}\|f\|_{L^s(B_1)}\nonumber\\
&\leq\frac{1}{2}\|u_+\|_{L^\infty(B_R)}
+C\left( \Lambda(B_1)\right) ^\frac{\delta p^\prime}{\gamma(\delta-1)}\notag\\
&\quad\times\left(\frac{1}{(R-r)^{\frac{n}{\gamma}(m_*+1)}}\|u_+\|_{L^\gamma(B_1)}
+\frac{1}{(R-r)^{\frac{n}{\gamma}m_*+\frac{n}{s}-2}}\|f\|_{L^s(B_1)}\right),
\end{align}
where $C=C(n,p,q,\gamma)$. Let $h(r):=\|u_+\|_{L^\infty(B_r)}$, then \eqref{lin} becomes
\begin{align*}
h(r)&\leq \frac{1}{2}h(R)+C\left( \Lambda(B_1)\right) ^\frac{\delta p^\prime}{\gamma(\delta-1)}
\left(\frac{1}{(R-r)^{\frac{n}{\gamma}(m_*+1)}}\|u_+\|_{L^\gamma(B_1)}\right.\notag\\
&\quad\left.+\frac{1}{(R-r)^{\frac{n}{\gamma}m_*+\frac{n}{s}-2}}\|f\|_{L^s(B_1)}\right).
\end{align*}
Proceeding as a proof of  Lemma 4.3  in [\ref{HL}],  we may derive 
\begin{align}
\|u_+\|_{L^\infty\left(B_\theta\right)}
&\leq C\left( \Lambda(B_1)\right) ^\frac{\delta p^\prime}{\gamma(\delta-1)} \notag\\
&\quad \times \left(\frac{1}{(1-\theta)^{\frac{n}{\gamma}(m_*+1)}}\|u_+\|_{L^\gamma(B_1)}+\frac{1}{(1-\theta)^{\frac{n}{\gamma}m_*+\frac{n}{s}-2}}\|f\|_{L^s(B_1)}\right),
\end{align}
where $C=C(n,p,q,\gamma)$.

{\bf Step 3. Consider \bm{$\|(-u)_+\|_{L^\infty\left(B_\theta\right)}$}.}
We consider $u$ is supersolution of \eqref{Equ01}, and rewrite the above result in terms of $-u$, then we could prove the claim. 

The proof of Theorem \ref{THM101} is complete.
\begin{remark}
	{\rm The proof of Theorem \ref{THM101}, clearly shows that the decrease in the integrability of $\lambda^{-1}$ leads to an increase in the integrability index of $f$ compared to uniform case. However, this does not imply that the integrability assumption of $\mu$ is redundant. Indeed, by virtue of $p^*<\frac{2q}{q+1}$,  we also obtain $p>\frac{1}{\frac{2}{n-1}-\frac{1}{q}}$. Therefore, a sufficiently strong integrability assumption for $\mu$ is indispensable for the continuation of the proof.}
\end{remark}

\section{Proof of Theorem \ref{THM102}}
In this section, without loss of generality, we can assume that $\|f\|_{\frac{nq}{2q-n}}=1$ for $f \not\equiv 0$. We shall begin with the Lemma \ref{cs}.  This lemma is mainly based on [\ref{CUR}, Lemma 2.22] and [\ref{XU}, Lemma B], but in comparison, we do not assume that $u$ is bounded. Our idea of proof is motivated by [\ref{CSL}, \ref{CA}].
\begin{lemma}\label{cs}
	Assume that (H1)-(H2) hold, $q>\frac{n}{2}$ and $f \in L^s(\Omega)$ for some $s>\frac{nq}{2q-n}.$ Let $u$ be the non-negative, weak subsolution of \eqref{Equ01} in $\Omega$ with zero Dirichlet boundary value.
	Then for any $\alpha \in \left(0, \frac{4}{c_s^2\|\lambda^{-1}\|_{q}} \right) ,$ where $c_s$ is the sharp constant of the Sobolev inequality\footnote{Here the Sobolev inequality is $\|u\|_{p*}\le c_s\|\nabla u\|_{p}$, where $p^*=\frac{np}{n-p},$ $u \in W_0^{1,p}(\Omega)$.}, there exists $C=C(c_s, \Omega, n, q)$ such that
	\begin{align*}
	\left(\int_\Omega e^\frac{\alpha q^*u}{2}{\rm d}x\right)^\frac{1}{q^*}\leq \frac{2\alpha|\Omega|^{1-\frac{(q^*)^\prime}{s_0}}}{\left(2-c_s\left\|\lambda^{-1}\right\|^\frac{1}{2}_{q}\sqrt{\alpha}\right)^2}+|\Omega|^\frac{1}{q^*},
	\end{align*}
	where $q^*=\frac{2nq}{n(q+1)-2q}$.
\end{lemma}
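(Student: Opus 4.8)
The plan is to use the exponential function $e^{\alpha u_+/2}$ as the building block of the test function, derive a Caccioppoli-type inequality for it, and then exploit the Sobolev inequality together with the smallness assumption on $\alpha$ to close the estimate. Since $u$ is not assumed bounded, I cannot directly plug $e^{\alpha u/2}$ into the weak formulation; instead I would first work with the truncated function $v_m := e^{\alpha (u\wedge m)/2}-1$ (or $e^{\alpha u_+/2}-1$ capped at level $m$), which is bounded and Lipschitz, hence an admissible test multiplier by Lemma \ref{lian}, and then pass to the limit $m\to\infty$ using Fatou's lemma / monotone convergence at the very end. The key algebraic identity is that $|\nabla v_m|^2 \sim \tfrac{\alpha^2}{4} e^{\alpha u}\,|\nabla u|^2\,\chi_{\{u<m\}}$, so testing \eqref{def1} against a function of the form $(e^{\alpha u} - 1)$ reproduces, up to constants, $\int \lambda |\nabla v_m|^2$ on the left.

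More precisely, I would test with $\phi = e^{\alpha u_+} - 1$ (suitably truncated), use (H2) to get $\int \lambda |\nabla u_+|^2 e^{\alpha u_+}\,{\rm d}x \le \tfrac{1}{\alpha}\int f (e^{\alpha u_+}-1)\,{\rm d}x$, rewrite the left side as $\tfrac{4}{\alpha^2}\int \lambda |\nabla v_m|^2\,{\rm d}x$, and then apply the Sobolev inequality $\|v_m\|_{p^*}\le c_s\|\nabla v_m\|_{p}$ with the exponent $p = \tfrac{2q}{q+1}$ (so that $p^* = q^*$), after first using Hölder with $\|\lambda^{-1}\|_q$ to convert $\|\nabla v_m\|_{2q/(q+1)}^2$ into $\|\lambda^{-1}\|_q \int \lambda|\nabla v_m|^2$. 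This yields
\[
\|v_m\|_{q^*}^2 \;\le\; c_s^2\,\|\lambda^{-1}\|_q \cdot \frac{\alpha^2}{4}\cdot\frac{1}{\alpha}\int f(e^{\alpha u_+}-1)\,{\rm d}x
\;=\; \frac{c_s^2\|\lambda^{-1}\|_q\,\alpha}{4}\int f(e^{\alpha u_+}-1)\,{\rm d}x.
\]
On the right-hand side I would bound $\int f (e^{\alpha u_+}-1)\,{\rm d}x$ by Hölder with exponents $s_0 = \tfrac{nq}{2q-n}$ and $s_0' = (q^*)'$, using $\|f\|_{s_0}=1$, to get $\le \|e^{\alpha u_+}-1\|_{(q^*)'} = \|v_m\|_{q^*}^2 \cdot(\text{something})$ — more carefully, since $(q^*)' \cdot \tfrac12 < q^*$ one interpolates or simply notes $\|e^{\alpha u_+}-1\|_{(q^*)'}\le \|v_m^2\|_{(q^*)'} \le |\Omega|^{1-(q^*)'/s_0}\|v_m\|_{q^*}^2$ after a further Hölder step on the low-exponent norm. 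Collecting terms, the coefficient $\tfrac{c_s^2\|\lambda^{-1}\|_q\alpha}{4}|\Omega|^{1-(q^*)'/s_0}$ multiplying $\|v_m\|_{q^*}^2$ is strictly less than $1$ precisely when $\alpha < \tfrac{4}{c_s^2\|\lambda^{-1}\|_q}$, so one absorbs $\|v_m\|_{q^*}^2$ to the left; tracking the constants and rearranging gives the factor $(2 - c_s\|\lambda^{-1}\|_q^{1/2}\sqrt\alpha)^2$ in the denominator. Finally, adding back the $+1$ (from $e^{\alpha u_+} = v_m + 1$) contributes the $|\Omega|^{1/q^*}$ term after taking the $q^*$-norm, and letting $m\to\infty$ via monotone convergence delivers the stated bound.

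The main obstacle, and the point where the argument with unbounded $u$ genuinely differs from \cite{CUR,XU}, is justifying that $e^{\alpha u_+/2} \in L^{q^*}(\Omega)$ a priori so that the absorption step is legitimate — one cannot absorb an infinite quantity. This is exactly why the truncation $u\wedge m$ is essential: for each fixed $m$ the function $v_m$ is bounded, all integrals are finite, the absorption is valid with an $m$-independent constant, and only then does one let $m\to\infty$; the resulting uniform bound shows a posteriori that the limit is finite. A secondary technical point is checking that the chosen Sobolev exponent $p=\tfrac{2q}{q+1}$ satisfies $1<p<n$ (equivalently $q>n/2$ plus $q>1$), which is guaranteed by the hypothesis $q>n/2$, and that $(q^*)' \le s_0$, i.e. $1 - (q^*)'/s_0 \ge 0$, so that the $|\Omega|$-powers have the sign claimed; both reduce to elementary arithmetic with the definitions of $q^*$ and $s_0$.
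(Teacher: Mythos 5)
Your proposal is correct and follows essentially the same route as the paper: a Lipschitz truncation of $e^{\alpha u}$ admissible by Lemma \ref{lian}, a Caccioppoli-type bound for $e^{\alpha u_+/2}-1$, H\"older with $\|\lambda^{-1}\|_q$ followed by the Sobolev embedding $W^{1,\frac{2q}{q+1}}_0\hookrightarrow L^{q^*}$, absorption of the quadratic term using $\alpha<\frac{4}{c_s^2\|\lambda^{-1}\|_q}$, and Minkowski's inequality for the final $|\Omega|^{1/q^*}$. One bookkeeping correction: $s_0'=q^*/2$, not $(q^*)'$, so the right-hand side $\int_\Omega f(e^{\alpha u_+}-1)\,{\rm d}x$ should be split as $\int_\Omega f v^2\,{\rm d}x+2\int_\Omega f v\,{\rm d}x$ with $f\in L^{s_0}$ paired against $v^2\in L^{q^*/2}$ and $f\in L^{(q^*)'}$ paired against $v\in L^{q^*}$ --- exactly as in the paper's display \eqref{ty} --- after which the absorption proceeds as you describe.
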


\begin{proof}
	First we fix $\alpha\in \left(0,\frac{4}{c_s^2\|\lambda^{-1}\|_{q}}\right)$
	and define
	\begin{align*}
	\varphi(u)=\phi^\alpha_{N}(u)=
	\begin{cases}
	e^{\alpha u}-1,&~u\leq N,\\
	\alpha e^{\alpha N}u +(1-\alpha N)e^{\alpha N}-1,&~u>N.
	\end{cases}
	\end{align*}
	Using Lemma \ref{lian}, we have $\phi^\alpha_{N}(u) \in H^1_0(\Omega, \bf{A}).$ Substituting $\phi^\alpha_{N}(u)$ into \eqref{def1} yields
	\begin{align}\label{1.2.2}
	\int_{\Omega}{\bf{A}}(x)\nabla u\nabla \phi^\alpha_N {\rm d}x
	&=\int_\Omega {\bf{A}}(x)\nabla u\nabla u \left(\phi _N^\alpha \right)^\prime {\rm d}x\nonumber\\
	&\leq \int_\Omega |f \phi_{N}^{\alpha}|{\rm d}x
	\leq\int_\Omega |f|\left[\left|e^{\frac{\alpha u}{2}}-1\right|^2+2\left|e^{\frac{\alpha u}{2}}-1\right|\right]{\rm d}x,~\forall N\geq 1 .
	\end{align}
	Then from (H1)-(H2), Lemma \ref{lian} and \eqref{1.2.2}, it infers
	\begin{align}\label{1.1}
	\int_\Omega \lambda(x)\left|\nabla F^{\alpha}_{N}(u)\right|^2 {\rm d}x
	&\leq \int_{\Omega}{\bf{A}}(x)\nabla u\nabla \phi^\alpha_N {\rm d}x\nonumber\\
	&\leq\int_\Omega |f|\left|e^{\frac{\alpha u}{2}}-1\right|^2 {\rm d}x+2\int_\Omega |f|\left|e^{\frac{\alpha u}{2}}-1\right|{\rm d}x,
	\end{align}
	where $F^{\alpha}_{N}=\displaystyle \int_{0}^{u}\left|\left(\phi_{N}^{\alpha}\right)^\prime(s)\right|^\frac{1}{2} {\rm d}s$. In addition, according to the definition of $\phi_{N}^{\alpha}$, we know
	\begin{align}\label{F}
	\varliminf \limits_{N\rightarrow \infty} \left| \nabla F^{\alpha}_{N}(u)\right|
	\geq \sqrt{\alpha} e^{\frac{\alpha u}{2}}|\nabla u|.
	\end{align}
	Therefore, together with H\"older's inequality, \eqref{F} and \eqref{1.1}, we have
	\begin{align}\label{sd}
	\left(\int_\Omega\left|\nabla \left(e^{\frac{\alpha u}{2}}-1\right)\right|^{\frac{2q}{q+1}}{\rm d}x\right)^{\frac{q+1}{2q}}
	&=\left(\int_\Omega \left|\nabla e^{\frac{\alpha u}{2}}\right|^{\frac{2q}{q+1}}{\rm d}x\right)^{\frac{q+1}{2q}}\nonumber\\
	&\leq \left\|\lambda^{-1}\right\|_{q}^\frac{1}{2} \left(\int_\Omega \lambda(x)\left|\nabla e^\frac{\alpha u}{2}\right|^2{\rm d}x\right)^\frac{1}{2}\nonumber\\
	&\leq \left\|\lambda^{-1}\right\|_{q}^\frac{1}{2} \left(\frac{\alpha}{4}\int_\Omega \lambda(x)\left|\nabla F^{\alpha}_{N}(u)\right|^2{\rm d}x\right)^\frac{1}{2}\nonumber\\
	&\leq \left\|\lambda^{-1}\right\|_{q}^\frac{1}{2}\left[\frac{\alpha}{4}\int_\Omega |f|\left|e^{\frac{\alpha u}{2}}-1\right|^2{\rm d}x\right.\notag\\
&\quad\left.+\frac{\alpha}{2}\int_\Omega |f|\left|e^\frac{\alpha u}{2}-1\right|{\rm d}x\right]^\frac{1}{2}.	
	\end{align}
	Once again, we apply Sobolev inequality and \eqref{sd} to observe that the following inequality holds
	\begin{align}\label{ty}
	\left(\int_\Omega\left|e^\frac{\alpha u}{2}-1\right|^{q^*}{\rm d}x\right)^{\frac{1}{q^*}}
	&\leq c_s\left(\int_\Omega\left|\nabla\left(e^\frac{\alpha u}{2}-1\right)\right|^{\frac{2q}{q+1}}{\rm d}x\right)^\frac{q+1}{2q}\nonumber\\
	&\leq c_s\left\|\lambda^{-1}\right\|_{q}^\frac{1}{2}\left[\frac{\alpha}{4}\left(\int_\Omega\left|e^\frac{\alpha u}{2}-1\right|^{q^*}{\rm d}x\right)^{\frac{2}{q^*}}\right.\notag\\
	&\quad\left.+\frac{\alpha}{2}\|f\|_{{(q^*)^\prime}}
	\left(\int_\Omega\left|e^\frac{\alpha u}{2}-1\right|^{q^*}{\rm d}x\right)^\frac{1}{q^*}\right]^\frac{1}{2},
	\end{align}
	where $q^*=\frac{2nq}{n(q+1)-2q}$.
	Meanwhile, we notice that
	\begin{align*}
	\|f\|_{(q^*)^\prime}\leq |\Omega|^{1-\frac{(q^*)^\prime}{s_0}},
	\end{align*}
      and $1-\frac{c_s\left\|\lambda^{-1}\right\|_q^\frac{1}{2}\sqrt{\alpha}}{2}>0$, so, it follows from \eqref{ty} that
	\begin{align}\label{tyu}
	\left(\int_\Omega\left|e^\frac{\alpha u}{2}-1\right|^{q^*}{\rm d}x\right)^\frac{1}{q^*}
	\leq \frac{2\alpha\|f\|_{(p^*)^\prime}}{\left(2-c_s\left\|\lambda^{-1}\right\|^\frac{1}{2}_{q}\sqrt{\alpha}\right)^2}\leq \frac{2\alpha|\Omega|^{1-\frac{(q^*)^\prime}{s_0}}}{\left(2-c_s\left\|\lambda^{-1}\right\|^\frac{1}{2}_{q}\sqrt{\alpha}\right)^2}.
	\end{align}
	By Minkowski's inequality and \eqref{tyu}, we get
	\begin{align}
	\left(\int_\Omega e^\frac{\alpha q^*u}{2}{\rm d}x\right)^\frac{1}{q^*}&\leq \left(\int_{\Omega}\left|e^{\frac{\alpha u}{2}}-1\right|^{q^*} {\rm d}x\right)^\frac{1}{q^*}+|\Omega|^\frac{1}{q^*}\nonumber\\
	&\leq \frac{2\alpha|\Omega|^{1-\frac{(q^*)^\prime}{s_0}}}{\left(2-c_s\left\|\lambda^{-1}\right\|^\frac{1}{2}_{q}\sqrt{\alpha}\right)^2}+|\Omega|^\frac{1}{q^*}.
	\end{align}
	This completes the proof of Lemma \ref{cs}.
\end{proof}

{\bf Proof of Theorem \ref{THM102}.}
Without any loss of generality, we assume $\sup\limits_{\Omega}u=\|u\|_{\infty}$, otherwise we consider $-u$.
Let $\varphi = \widetilde{\phi_{N}^{\alpha} }(u) \tilde{\eta}$, where
\begin{align}
\widetilde{\phi_{N}^{\alpha} } (u)=
\begin{cases}
e^{\frac{\alpha}{2}u_+}, & u\leq N,\\
\frac{\alpha}{2}e^{\frac{\alpha}{2}N}\left(u-N\right)+e^{\frac{\alpha}{2}N}, & u>N,
\end{cases}
~\text{and}~~\tilde{\eta}\geq 0\in C^\infty_0(\Omega).
\end{align}
We choose $\varphi$ as a test function in \eqref{def1} and obtain
\begin{align*}
\int_{\Omega}\left({\bf{A}}(x)\nabla u\nabla \widetilde{\phi_{N}^{\alpha} }\right)\tilde{\eta}{\rm d}x+\int_{\Omega}\left({\bf{A}}(x)\nabla u\nabla \tilde{\eta}\right)\widetilde{\phi_{N}^{\alpha}}{\rm d}x=\int_{\Omega}f\widetilde{\phi_{N}^{\alpha} } \tilde{\eta} {\rm d}x.
\end{align*}
In view of $\displaystyle \int_{\Omega}\left({\bf{A}}(x)\nabla u\nabla \widetilde{\phi_{N}^{\alpha} }\right)\tilde{\eta}{\rm d}x\geq 0$, we have
\begin{align}\label{qwe}
\int_{\Omega}\widetilde{\phi_{N}^{\alpha} }{\bf{A}}(x) \nabla u    \nabla \tilde{\eta}{\rm d}x\leq \int_{\Omega}f \widetilde{\phi_{N}^{\alpha} }\tilde{\eta} {\rm d}x.
\end{align}
Since $C^\infty_0(\Omega)$ is dense in $H^1_0(\Omega,{\bf{A}})$, it is obvious that \eqref{qwe} holds for any $\tilde{\eta}\in H^1_{0}(\Omega,{\bf{A}})$. Therefore, we may choose $\tilde{\eta}$ satisfying
\begin{align*}
\tilde{\eta}=\eta^\beta_{N} (u)=
\begin{cases}
\left(e^{\frac{\alpha}{2}u_+}\right)^\beta-1 , & u\leq N,\\
\frac{\alpha\beta}{2}e^{\frac{\alpha\beta}{2}N}(u-N)+e^{\frac{\alpha N}{2}}-1, & u>N,
\end{cases}
\end{align*}
where $\beta \ge 1$ as a test function. Hence, substituting $\eta^\beta_{N}$ into \eqref{qwe}, we have
\begin{align}\label{1.2}
\int_{\Omega}\left({\bf{A}}(x)\nabla u_+\nabla u_+\right)\left(\eta_N^\beta\right)^\prime \widetilde{\phi_N^\alpha}{\rm d}x\leq \int_{\Omega}f \widetilde{\phi_N^\alpha} \eta^\beta_{N}{\rm d}x.
\end{align}
Using Lemma \ref{lian} and (H1), we rewrite \eqref{1.2} as
\begin{align*}
\int_{\Omega}\lambda|\nabla\tilde{G}(u)|^2 {\rm d}x\leq \int_{\Omega}f\widetilde{\phi_N^\alpha} \eta^\beta_{N}{\rm d}x,
\end{align*}
where $\tilde{G}(u)=\int_{0}^{u}\left[\left(\eta_N^\beta\right)^\prime(s) \widetilde{\phi_N^\alpha}(s)\right]^\frac{1}{2}{\rm d}s$.
A simple computation shows
\begin{align}
&\varliminf\limits_{N\rightarrow \infty }|\nabla\tilde{G}(u)|^2\geq \frac{8\beta}{(\beta+1)^2}\left|\nabla\left(e^{\frac{\alpha}{2}u_+}\right)^\frac{\beta+1}{2}\right|^2,\label{TH2b1}\\
&\widetilde{\phi_N^\alpha} \eta^\beta_N\leq \left(e^{\frac{\alpha}{2}u_+}\right)^{\beta+1},~~\forall N\geq 1.\label{TH2b2}
\end{align}
Let $\omega=e^{\frac{\alpha}{2}u_+}$, then together with \eqref{1.2}-\eqref{TH2b2}, we obtain 
\begin{align}\label{1.3}
\int_{\Omega}\lambda\left|\nabla \omega^{\frac{\beta+1}{2}}\right|^2 {\rm d}x\leq \frac{\alpha(\beta+1)^2}{8\beta}\int_{\Omega}f\omega^{\beta+1} {\rm d}x.
\end{align}
With the aid of the Sobolev inequality and \eqref{1.3}, we deduce
\begin{align}\label{4r}
\left(\int_\Omega\left(\omega^\frac{\beta+1}{2}\right)^{q^*}{\rm d}x\right)^\frac{1}{q^*}
&\leq\left(\int_\Omega\left(\omega^\frac{\beta+1}{2}-1\right)^{q^*}{\rm d}x\right)^\frac{1}{q^*}+|\Omega|^\frac{1}{q^*}\nonumber\\
&\leq c_s\left\|\lambda^{-1}\right\|_{q}^\frac{1}{2}\left(\int_\Omega\lambda\left|\nabla \omega^\frac{\beta+1}{2}\right|^2 {\rm d}x\right)^\frac{1}{2}+|\Omega|^\frac{1}{q^*}\nonumber\\
&\leq c_s\left\|\lambda^{-1}\right\|_{q}^\frac{1}{2}\frac{\sqrt{\alpha}(\beta+1)}{2\sqrt{2\beta}}\|f\|_{s}^\frac{1}{2}\left(\int_\Omega\left(\omega^\frac{\beta+1}{2}\right)^{2s^\prime}{\rm d}x\right)^\frac{1}{2s^\prime}\nonumber\\
&\quad+|\Omega|^{\frac{1}{q^*}-\frac{1}{2s^\prime}}\left(\int_\Omega\left(\omega^\frac{\beta+1}{2}\right)^{2s^\prime}{\rm d}x\right)^\frac{1}{2s^\prime}\nonumber\\
&=\left[c_s\left\|\lambda^{-1}\right\|_{q}^\frac{1}{2}\frac{\sqrt{\alpha}\left(\beta+1\right)}{2\sqrt{2\beta}}\|f\|_{s}^\frac{1}{2}+|\Omega|^{\frac{1}{q^*}-\frac{1}{2s^\prime}}\right]\notag\\
&\quad\times\left(\int_\Omega\left(\omega^\frac{\beta+1}{2}\right)^{2s^\prime}{\rm d}x\right)^\frac{1}{2s^\prime},
\end{align}
where $2s^\prime<q^*$ due to $s>s_0=\left(\frac{q^*}{2}\right)^\prime$. Because of  $\frac{\beta+1}{2\sqrt{\beta}}\geq 1$, \eqref{4r} may be rewritten as the following form
\begin{align}\label{12d}
\left(\int_\Omega\left(\omega^\frac{\beta+1}{2}\right)^{q^*}{\rm d}x\right)^\frac{2}{q^*(\beta+1)}
&\leq\left[c_s\left\|\lambda^{-1}\right\|_q^\frac{1}{2}\frac{\sqrt{\alpha}(\beta+1)}{2\sqrt{2\beta}}\|f\|_{s}^\frac{1}{2}+|\Omega|^{\frac{1}{q^*}-\frac{1}{2s^\prime}}\right]^\frac{2}{\beta+1}\notag\\
&\quad\times\left(\int_\Omega\left(\omega^\frac{\beta+1}{2}\right)^{2s^\prime}{\rm d}x\right)^\frac{1}{s^\prime(\beta+1)}\nonumber\\
&\leq\left[C\frac{\sqrt{\alpha}(\beta+1)}{\sqrt{\beta}}\left(\left\|\lambda^{-1}\right\|_{q}^\frac{1}{2}\|f\|_{s}^\frac{1}{2}+1\right)\right]^\frac{2}{\beta+1}\notag\\
&\quad\times\left(\int_\Omega\left(\omega^\frac{\beta+1}{2}\right)^{2s^\prime}{\rm d}x\right)^\frac{1}{s^\prime(\beta+1)}.
\end{align}
Here $C=C(n,p,q,s,\Omega)\geq 1$. Now, if we set $\frac{\beta+1}{2}=\chi^{i},~i\in\mathbb{N}$ in \eqref{12d}, where $\chi=\frac{q^*}{2s^\prime}>1$, then \eqref{12d} becomes
\begin{align}
\|\omega\|_{{\chi^iq^*}}&\leq \left[\sqrt{\alpha}C\left(\left\|\lambda^{-1}\right\|_{q}^\frac{1}{2}\|f\|_{s}^\frac{1}{2}+1\right)\right]^\frac{1}{\chi^i}\chi^\frac{i}{\chi^i}\|\omega\|_{{2\chi^is^\prime}}\nonumber\\
&\leq \left[C\alpha \left(\left\|\lambda^{-1}\right\|_{q}\|f\|_{s}+1\right)\right]^{\sum_{j=0}^{i}\frac{1}{2\chi^j}}\chi^{\sum_{j=0}^{i}\frac{j}{\chi^j}}\|\omega\|_{{2s^\prime}}.
\end{align}
And then, taking $i\rightarrow +\infty$ yields
\begin{align}\label{int}
\|\omega\|_{\infty}\leq \left[C\alpha\left(\left\|\lambda^{-1}\right\|_{q}\|f\|_{{s}}+1\right)\right]^\frac{\chi}{2(\chi-1)}\chi^\frac{\chi}{(\chi-1)^2}\|\omega\|_{{2s^\prime}},
\end{align}
where $C=C(n,p,q,s,\Omega)$.
Using Lemma \ref{cs} and  H\"older's inequality,  \eqref{int} can be rewritten as
\begin{align*}
\|\omega\|_{{\infty}}\leq&\; C\left[\alpha\left(\left\|\lambda^{-1}\right\|_{q}\|f\|_{s}+1\right)\right]^\frac{\chi}{2(\chi-1)}\left(\int_\Omega \omega^{2s^\prime} {\rm d}x\right)^\frac{1}{2s^\prime}\\
\leq&\; C\left[\alpha\left(\left\|\lambda^{-1}\right\|_{q}\|f\|_{s}+1\right)\right]^\frac{\chi}{2(\chi-1)}\left(\left(\int_{\Omega}e^{\frac{\alpha}{2}q^* u}{\rm d}x\right)^\frac{1}{q^*}+|\Omega|^\frac{1}{q^*}\right)\\
\leq&\; C\left[\alpha\left(\left\|\lambda^{-1}\right\|_{q}\|f\|_{s}+1\right)\right]^\frac{\chi}{2(\chi-1)}\left(\frac{\alpha |\Omega|^{1-\frac{(q^*)^\prime}{s_0}}}{\left(2-c_s\left\|\lambda^{-1}\right\|_{q}\sqrt{\alpha}\right)^2}+|\Omega|^\frac{1}{q^*}\right).
\end{align*}
By the definition of $\omega$, we further obtain
\begin{align}\label{ite}
e^{\frac{\alpha}{2}\|u_+\|}\leq C\left[\alpha\left(\|\lambda^{-1}\|_{q}\|f\|_{s}+1\right)\right]^\frac{\chi}{2(\chi-1)}\left(\frac{\alpha |\Omega|^{1-\frac{(q^*)^\prime}{s_0}}}{\left(2-c_s\|\lambda^{-1}\|_{q}\sqrt{\alpha}\right)^2}+|\Omega|^\frac{1}{q^*}\right).
\end{align}
Obviously, \eqref{ite} is equivalent to 
\begin{align}
\|u\|_{\infty}\leq C\left[\log\left(\left\|\lambda^{-1}\right\|_q\|f\|_{s}+1\right)+1\right],
\end{align}
where $C=C(n,p,q,s,\Omega)\geq 1$.
This completes the proof.
\begin{remark}
\rm We need to point out some differences below. Firstly, in the proof of Lemma \ref{cs}, we construct a truncation function that satisfies the condition of Lemma \ref{lian} to bypass some difficulties from the global boundedness of $u$, as described in [\ref{CUR}, \ref{XU}]. Secondly, In the proof of Theorem \ref{THM102}, we construct a Caccioppoli inequality for $e^{\alpha u_+}$ to avoid the transformation in [\ref{CUR}, \ref{XU}] which essentially depends on the bounded condition.
\end{remark}

\section{Harnack Inequality~---~Proof of Theorem \ref{THM103}}
To end the proof of Theorem \ref{THM103}, we first give a key lemma, namely, weak Harnack inequality.
\begin{lemma}\label{THM104}
	(Weak Harnack inequality) Assume that (H1)-(H2) hold, $q>\frac{n}{2}$ and $f\in L^s(\Omega)$ for some $s>\frac{nq}{2q-n}$.
	Let $u\in H^1({\bf{A}},\Omega)$ be a nonnegative supersolution in $\Omega$. Then for any $B_R\subset \Omega$, $\gamma\in (0,\frac{q_*}{2}) $ and any $0<\theta<\tau<1$, there exists a constant $C$ such that
	\begin{align*}
	\left(\frac{1}{R^n}\int_{B_{\tau R}} u^\gamma {\rm d}x \right)^\frac{1}{\gamma}\leq C\left(\inf_{B_{\theta R}}u +R^{2-\frac{n}{s}}\|f\|_{L^s(B_R)}\right),
	\end{align*}
	where $q^*=\frac{2nq}{n(q+1)-2q}$, $C\leq c_1 e^{c_2\Lambda\left(B_R\right)}$, $c_1=c_1\left(\gamma,n,p,q,s,\tau,\theta\right)\geq 1$, $c_2=c_2\left(\gamma,n,p,q\right)>0$.
\end{lemma}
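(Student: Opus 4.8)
The plan is to establish the weak Harnack inequality (Lemma \ref{THM104}) by the classical Moser iteration strategy in two complementary regimes for the exponent, interpolated through a John--Nirenberg-type lemma applied to $\log u$. Throughout I work with $v:=u+R^{2-\frac{n}{s}}\|f\|_{L^s(B_R)}$ (after rescaling to $R=1$ one may as well take $k:=\|f\|_{L^s(B_1)}$ and $v=u+k$), so that $v$ is a positive supersolution with $-\dive({\bf A}\nabla v)=f\geq -\frac{|f|}{k}v$, which is the form that makes the right-hand side absorbable exactly as in the proof of Theorem \ref{THM101}. The goal then reads $\bigl(\fint_{B_{\tau}} v^{\gamma}\bigr)^{1/\gamma}\le C\inf_{B_{\theta}}v$.

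\textbf{Step 1: Caccioppoli estimates for powers of $v$.} For $\beta\neq -1$ I test \eqref{def1} with $\phi=v^{\beta}\eta^2$ (using the truncation trick $\phi_m$ exactly as in Step~1 of the proof of Theorem \ref{THM101} to make $\phi$ admissible without assuming $u$ bounded, then letting $m\to\infty$). As in \eqref{rcaci}--\eqref{fg}, combining (H1)--(H2), Young's inequality, H\"older's inequality with $\lambda^{-1}\in L^q$, the Sobolev inequality and the improved cut-off estimate of Lemma \ref{lem2.1}, one obtains for $\bar\beta:=\frac{\beta+1}{2}\neq 0$
\begin{align*}
\Bigl(\int_{B_{\rho}}\bigl|\nabla v^{\bar\beta}\bigr|^{\frac{2q}{q+1}}{\rm d}x\Bigr)^{\frac{q+1}{2q}}\le C\,\frac{|\beta+1|}{|\beta|^{1/2}}\,(\sigma-\rho)^{-\frac{n}{n-1}}\bigl(\Lambda(B_1)\bigr)^{1/2}\bigl\|v^{\bar\beta}\bigr\|_{W^{1,p^*}(B_{\sigma})},
\end{align*}
and, feeding this into the Sobolev embedding $W^{1,p^*}\hookrightarrow L^{q^*}$ (and the self-improvement argument around \eqref{dao}--\eqref{qia}), a reverse-H\"older chain $\|v^{\bar\beta}\|_{L^{\chi\,q^*}(B_\rho)}\lesssim (\ldots)\|v^{\bar\beta}\|_{L^{q^*}(B_\sigma)}$ with the same $\chi=\frac{2q}{(q+1)p^*}>1$ as before. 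Iterating over shrinking radii gives, for every $\gamma_0>0$,
\begin{align*}
\sup_{B_{\theta}} v^{-1}\le C\Bigl(\fint_{B_{\tau'}} v^{-\gamma_0}{\rm d}x\Bigr)^{1/\gamma_0}
\quad\text{and}\quad
\Bigl(\fint_{B_{\tau''}} v^{\gamma_1}{\rm d}x\Bigr)^{1/\gamma_1}\le C\Bigl(\fint_{B_{\tau}} v^{\gamma_0}{\rm d}x\Bigr)^{1/\gamma_0}
\end{align*}
for suitable $0<\gamma_0<\gamma_1$, with $C\le c_1 e^{c_2\Lambda(B_1)}$; the exponential dependence on $\Lambda$ is forced by the factor $\bigl(\Lambda(B_1)\bigr)^{\frac{1}{2}\sum 1/\chi^i}$ accumulating through the iteration, and the subtlety near $\beta=-1$ (where $\bar\beta=0$ and the estimate degenerates) is exactly why one only reaches small positive/negative exponents directly and needs Step~2 to bridge across $\beta=-1$.

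\textbf{Step 2: The logarithm of $v$ and John--Nirenberg.} Testing \eqref{def1} with $\phi=v^{-1}\eta^2$ and using that $v$ is a supersolution together with $-\dive({\bf A}\nabla v)\ge -\frac{|f|}{k}v$ yields $\int \lambda|\nabla\log v|^2\eta^2\le C\int\mu|\nabla\eta|^2 + C\int |f|\eta^2$, and after invoking Lemma \ref{lem2.1} and H\"older's inequality this becomes a scale-invariant bound $\fint_{B_r}|\nabla\log v|^{\frac{2q}{q+1}}\le C\Lambda(B_1) r^{-\frac{2q}{q+1}}(\ldots)$; via the Sobolev--Poincar\'e inequality (in the form \eqref{Ine102}, adapted to $\lambda^{-1}\in L^q$) this shows $w:=\log v$ lies in $\mathrm{BMO}(B_{\tau})$ with norm $\le C\Lambda(B_1)$. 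The John--Nirenberg lemma then gives a constant $p_0=p_0(\Lambda(B_1),n,p,q)>0$ and a constant $C$ such that $\fint_{B_{\tau'}} e^{p_0 w}\,\fint_{B_{\tau'}} e^{-p_0 w}\le C$, i.e.
\begin{align*}
\Bigl(\fint_{B_{\tau'}} v^{p_0}{\rm d}x\Bigr)^{1/p_0}\le C\Bigl(\fint_{B_{\tau'}} v^{-p_0}{\rm d}x\Bigr)^{-1/p_0}.
\end{align*}

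\textbf{Step 3: Assembly.} Chaining the three pieces: for a given $\gamma\in(0,\frac{q^*}{2})$ I first use the forward iteration of Step~1 to pass from exponent $\gamma$ down to exponent $p_0$ on a slightly smaller ball, then the John--Nirenberg estimate of Step~2 to flip $+p_0$ into $-p_0$, then the (negative-exponent) iteration of Step~1 to pass from $-p_0$ to $-\infty$, obtaining $\bigl(\fint_{B_{\tau}} v^{\gamma}\bigr)^{1/\gamma}\le C\,\bigl(\fint v^{p_0}\bigr)^{1/p_0}\le C\,\bigl(\fint v^{-p_0}\bigr)^{-1/p_0}\le C\inf_{B_{\theta}}v$, with $C\le c_1 e^{c_2\Lambda(B_1)}$ and the stated parameter dependencies. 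Undoing the rescaling and recalling $v=u+R^{2-\frac{n}{s}}\|f\|_{L^s(B_R)}$ gives the claim. The main obstacle is Step~2: one must produce the BMO bound for $\log v$ with the $f$-term genuinely controlled (not just absorbed as in the $L^\infty$ bound) and with the correct $\Lambda(B_R)$-dependence, which requires the $(n-1)$-dimensional cut-off optimization of Lemma \ref{lem2.1} rather than a naive Caccioppoli inequality — together with verifying that the admissible exponent range $\gamma\in(0,\frac{q^*}{2})$ is exactly what the Sobolev exponent $q^*$ and the interpolation $2s'<q^*$ permit.
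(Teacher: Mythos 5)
Your overall architecture (positive-power reverse H\"older, a crossing step at $\beta=-1$, negative-power iteration to $-\infty$) is the classical Moser scheme, and your Steps 1 and 3 are sound and essentially coincide with what the paper does. The genuine gap is in Step 2: the claim that $w=\log v$ lies in $\mathrm{BMO}(B_\tau)$ with norm controlled by $\Lambda(B_1)$ does not follow under the hypotheses (H2). To apply John--Nirenberg you need a \emph{scale-invariant} bound $\fint_{B_r(y)}|w-w_{B_r(y)}|\,{\rm d}x\leq C$ uniformly over \emph{all} sub-balls $B_r(y)\subset B_\tau$. The logarithmic Caccioppoli estimate on $B_r(y)$, combined with H\"older's inequality against $\lambda^{-1}\in L^q$ and the $(1,\frac{2q}{q+1})$-Poincar\'e inequality, only yields $\fint_{B_r(y)}|w-w_{B_r(y)}|\,{\rm d}x\lesssim \Lambda(B_{2r}(y))^{\frac12}+\dots$, and $\Lambda(B_{2r}(y))$ involves \emph{averages} of $\lambda^{-q}$ and $\mu^p$ over the small ball, which are not controlled by $\Lambda(B_1)$: in the worst case $\fint_{B_{2r}}\lambda^{-q}\,{\rm d}x\leq (2r)^{-n}\int_{B_1}\lambda^{-q}\,{\rm d}x$, and in the paper's own Example \ref{e1} (where $\lambda(x)=|x|^{\beta}(\log\frac{1}{|x|})^{\theta}$) the quantity $\Lambda(B_r(0))$ genuinely blows up as $r\to 0$. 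A uniform BMO bound of this type is exactly what the Muckenhoupt-$A_2$ hypothesis of Fabes--Kenig--Serapioni buys via the scale-invariant inequality \eqref{Ine102}; under mere $L^q$/$L^p$ integrability it is unavailable, so John--Nirenberg cannot be invoked and your exponent $p_0$ cannot be produced this way.

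The paper avoids this obstruction entirely, and this is the substantive difference you should absorb. It replaces your Step 2 by two fixed-scale arguments: (i) with $k:=\exp(\fint_{B_\tau}\log(u+h)\,{\rm d}x)$, the function $v=\log\frac{k}{u+h}$ is a subsolution of an auxiliary equation with zero mean on $B_\tau$; applying the $L^\infty$ estimate of Theorem \ref{THM101} to $v$ and bounding $\|v\|_{L^{q_*}}$ by Poincar\'e and the logarithmic Caccioppoli inequality gives $k\leq C\,(\inf_{B_\theta}u+h)$ with $C\leq c_1e^{c_2\Lambda(B_1)}$; and (ii) for $\omega_+=(\log\frac{u+h}{k})_+$ it proves, by a Moser iteration carried out only on the fixed annuli between $B_{\frac{3\tau+\theta}{4}}$ and $B_\tau$ (so every constant involves $\Lambda(B_1)$ alone), the linear growth $\|\omega_+\|_{L^i}\leq C_1\Lambda(B_1)^{C_2}\,i$ for all $i$, and then obtains $\int e^{p_0\omega_+}{\rm d}x\leq C$ with $p_0\sim(\Lambda(B_1)^{C_2})^{-1}$ by summing the Taylor series of the exponential. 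This is a direct, one-sided substitute for John--Nirenberg that never tests the equation on small balls. If you want to keep your outline, you must replace the BMO/John--Nirenberg step by such a fixed-scale exponential-integrability argument; as written, Step 2 fails and the chain from $+p_0$ to $-p_0$ is broken.
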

\begin{remark}
	\rm
	Theorem \ref{THM103} may be concluded by combining Theorem \ref{THM101} with Lemma \ref{THM104}.
\end{remark}
The prove of Lemma \ref{THM104} is based on the strategy of [\ref{BPMS3}, Theorem 4.1]. Even though experts might already anticipate how to adapt the argument of [\ref{BPMS3}], for the completeness of the article, we give a detailed proof. 

{\bf Proof of Lemma \ref{THM104}.}
To better comprehend the structure of our proof, we will divide our proof into three steps. And  without loss of generality, we set $R=1$ here.

{\bf Step 1.} We claim that
\begin{align}\label{ine1}
\exp\left(\fint_{B_{\tau}}\log(u+h){\rm d}x\right)\leq C\left(\inf_{B_{\theta }}u+\|f\|_{L^s(B_1)}\right),
\end{align}
where $C=C\left(n,\tau,\theta,\Lambda(B_1),p,q,s\right)$.

First, we take $\phi=\frac{\varphi}{u+h}$ in \eqref{def1}, where $h:=\|f\|_{L^s(B_1)}$ and $\varphi\geq 0\in H^1_0(B_1,{\bf{A}})$, then we have
\begin{align}\label{yh}
\int_{B_1}\left( \frac{1}{u+h}{\bf {A}}(x)\nabla u\nabla \varphi -\frac{\varphi}{(u+h)^2} {\bf {A}}(x)\nabla u\nabla u\right)  {\rm d}x \geq \int_{B_1}\frac{f\varphi}{u+h} {\rm d}x.
\end{align}
Due to $\frac{\varphi}{(u+h)^2}{\bf {A}}(x)\nabla u\nabla u\geq 0$, \eqref{yh} becomes
\begin{align}\label{logx}
\int_{B_1}{\bf {A}}(x)\nabla \left(\log\frac{k}{u+h}\right)\nabla \varphi{\rm d}x\leq -\int_{B_1}\frac{f\varphi}{u+h}{\rm d}x.
\end{align}
Set $\log\frac{k}{u+h}=v$ and $\frac{f}{u+h}=\tilde{f}$, and rewrite \eqref{logx} as
\begin{align}\label{6u}
\int_{B_1}{\bf{A}}(x)\nabla v\nabla \varphi {\rm d}x\leq -\int_{B_1}\tilde{f}\varphi {\rm d}x,
\end{align}
where $\|\tilde{f}\|_{L^s(B_1)}\leq 1$. Then \eqref{6u} implies that $v$ is the weak subsolution of $-\dive({\bf{A}}(x)\nabla v)=-\tilde{f}$. 

To give a priori estimate for $v$, we choose $\varphi=\varphi_m\eta^2$ in \eqref{6u}, where
\begin{align*}
\varphi_m=
\begin{cases}
(v_++1)^\beta-1,& v<m,\\
\beta(m+1)^{\beta-1}(v-m)+(m+1)^\beta-1,&v\geq m,
\end{cases}
\end{align*}
and $\eta\in C^\infty_0(B_{\tau}),~\eta\equiv 1 $ in $B_{\rho }$; $\eta\equiv 0 $ in $B_\tau \setminus B_{\sigma}$ and $0<\theta\leq\rho<\sigma\leq \tau\leq 1$, by the same procedure of Theorem \ref{THM101}, we have
\begin{align}\label{ruo1}
\sup_{B_{\theta }}v\leq C \left(\Lambda(B_1)\right)^{\frac{\delta p^\prime}{q_*(\delta-1)}}\left(\frac{1}{(\tau-\theta)^{\frac{n}{q_*}(m_*+1)}}\|v\|_{L^{q_*}(B_{\tau })}+\frac{1}{\left(\tau-\theta\right)^{\frac{n m_*}{q^*}+\frac{n}{s}-2}}\right),
\end{align}
where $q^*=\frac{2nq}{n(q+1)-2q}$.
In the sequel, we will estimate the term $\|v\|_{L^{q_*}(B_{\tau })}$ in \eqref{ruo1}. Let $\phi=\frac{\eta_1^2}{u+h}$ in \eqref{def1}, where $\eta_1\in C^\infty_0(B_{1})$ and $\eta_1\equiv 1$ in $B_{\tau}$; $\eta_1 \equiv 0$ in $B_1\setminus B_{\frac{1+\tau}{2}}$. After direct calculation, we have
\begin{align*}
\int_{B_1}\eta_1^2 {\bf{A}}(x)\nabla v\nabla v{\rm d}x +\int_{B_1}2\eta_1 {\bf{A}}(x)\nabla v\nabla \eta_1 {\rm d}x\leq -\int_{B_1}\tilde{f}\eta^2_1 {\rm d}x,
\end{align*}
and then using the Young's inequality and H\"older's inequality yields
\begin{align}\label{ineb}
\int_{B_1}\lambda |\nabla v|^2\eta_1^2 {\rm d}x&\leq C \int_{B_1}\mu |\nabla\eta_1|^2{\rm d}x+|B_1|^{\frac{1}{s^\prime}} \notag\\
&\leq C \frac{\|\mu\|_{L^p(B_1)}}{(1-\tau)^2} |B_{1}|^\frac{1}{p^\prime}+|B_1|^{\frac{1}{s^\prime}} .
\end{align}
Subsequently, we let $k=\exp\left(\fint_{B_{\tau}}\log \left(u+h\right){\rm d}x \right)$ satisfying $\fint_{B_\tau}v {\rm d}x =0$,  and apply Poincar$\acute{e}$ inequality to get
\begin{align}\label{inea}
\|v\|_{L^{q_*}(B_{\tau })}\leq C\|\nabla v\|_{L^\frac{2q}{q+1}(B_{\tau})}\leq \|\lambda^{-1}\|_{L^q(B_{\tau })}^\frac{1}{2}\|\lambda^{\frac{1}{2}}\nabla v\|_{L^2(B_{\tau})}.
\end{align}
Substituting \eqref{ineb} into \eqref{inea} and it is not hard to verify 
\begin{align*}
\sup_{B_{\theta }}v\leq C\left(\Lambda(B_1)\right)^{\frac{\delta p^\prime}{q_*(\delta-1)}+\frac{1}{2}}\left(\frac{1}{(\tau-\theta)^{\frac{n}{q_*}(m_*+1)}(1-\tau)}+\frac{1}{\left(\tau-\theta\right)^{\frac{n m_*}{q_*}+\frac{n}{s}-2}}\right).
\end{align*}
Finally, \eqref{ine1} follows from the definitions of $v$ and $k$.

{\bf Step 2.}  We claim that there exist $p_0=p_0(\tau,\theta,\Lambda(B_1),p,s,n,q)>0$ and $C=C(\tau,\theta,\Lambda(B_1),$ $p,s,n,q)>0$ such that
\begin{align}\label{ine2}
\left(\int_{B_{\frac{3\tau+\theta}{4}}}|u+h|^{p_0}{\rm d}x\right)^\frac{1}{p_0}\leq C\exp\left(\fint_{B_{\tau}}\log (u+h){\rm d}x\right).
\end{align}

First of all, we take $\phi=\eta^2\frac{1}{u+h}\left((\omega_++1)^\beta+(2\beta)^{\beta}\right)$ in \eqref{def1}, where $\omega=\log\frac{u+h}{k}=-v,~\beta\geq1$,
$\eta\equiv 1$ in $B_\rho$; $\eta_1 \equiv 0$ in $B_\tau\setminus B_{\sigma}$ and $\eta\in C^\infty_0(B_\sigma)$, with $\frac{3\tau+\theta}{4}\leq \rho<\sigma\leq \tau<1$,
and notice
\begin{align*}
\nabla \phi&=\frac{2\eta}{u+h}\left((\omega_++1)^\beta+(2\beta)^\beta\right)\nabla \eta-\frac{\eta^2}{(u+h)^2}\left((\omega_++1)^\beta+(2\beta)^\beta\right)\nabla u\\
&\quad+\frac{\beta\eta^2}{u+h}\left(\omega_++1\right)^{\beta-1}\nabla (\omega_++1).
\end{align*}
Then, we obtain
\begin{align}\label{ine3}
&\int_{B_\sigma}\frac{\eta^2}{(u+h)^2}\left((\omega_++1)^\beta+(2\beta)^{\beta}\right){\bf{A}}(x)\nabla u\nabla u {\rm d}x \notag\\
&\quad -\beta\int_{B_\sigma}\frac{\eta^2}{u+h}\left(\omega_++1\right)^{\beta-1}{\bf{A}}(x)\nabla u\nabla\left(\omega_++1\right) {\rm d}x \notag\\
\leq&\int_{B_\sigma}\frac{2\eta}{u+h}\left(\left(\omega_++1\right)^\beta+\left(2\beta\right)^\beta\right){\bf{A}}(x)\nabla u\nabla\eta {\rm d}x-\int_{B_\sigma}f\frac{\eta^2}{u+h}\left((\omega_++1)^\beta+(2\beta)^{\beta}\right){\rm d}x .
\end{align}
Subsequently, using Young's inequality, we can estimate the first term on the right-hand side of \eqref{ine3} as follows
\begin{align}\label{ine4}
&\int_{B_\sigma}\frac{2\eta}{u+h}\left(\left(\omega_++1\right)^\beta+\left(2\beta\right)^\beta\right){\bf{A}}(x)\nabla u\nabla\eta {\rm d}x\notag\\
\leq&\;\frac{1}{4}\int_{B_\sigma}\frac{\eta^2\left(\omega_++1\right)^{\beta-1}}{\left(u+h\right)^2}{\bf{A}}(x)\nabla u\nabla u{\rm d}x +8\int_{B_\sigma}\left(\omega_++1\right)^{\beta+1}{\bf{A}}(x)\nabla\eta\nabla \eta {\rm d}x\notag\\
& +\frac{1}{4}\int_{B_\sigma}\frac{\eta^2}{(u+h)^2}\left(2\beta\right)^\beta {\bf{A}}(x)\nabla u\nabla u{\rm d}x +8\int_{B_\sigma}\left(2\beta\right)^\beta {\bf{A}}(x)\nabla\eta\nabla\eta {\rm d}x\notag\\
\leq&\;\frac{1}{4}\int_{B_\sigma}\frac{\eta^2\left(\omega_++1\right)^{\beta}}{\left(u+h\right)^2}{\bf{A}}(x)\nabla u\nabla u{\rm d}x+8\int_{B_\sigma}\left(\omega_++1\right)^{\beta+1}{\bf{A}}(x)\nabla\eta\nabla \eta {\rm d}x\notag\\
& +\frac{1}{4}\int_{B_\sigma}\frac{\eta^2}{(u+h)^2}\left(2\beta\right)^\beta {\bf{A}}(x)\nabla u\nabla u{\rm d}x +8\int_{B_\sigma}\left(2\beta\right)^\beta {\bf{A}}(x)\nabla\eta\nabla\eta{\rm d}x\notag\\
=&\;\frac{1}{4}\int_{B_\sigma}\frac{\eta^2}{(u+h)^2}\left((\omega_++1)^\beta+(2\beta)^{\beta}\right){\bf{A}}(x)\nabla u\nabla u {\rm d}x\notag\\
& +8\int_{B_\sigma}\left(\omega_++1\right)^{\beta+1}{\bf{A}}(x)\nabla\eta\nabla \eta {\rm d}x+ 8\int_{B_\sigma}\left(2\beta\right)^\beta {\bf{A}}(x)\nabla\eta\nabla\eta {\rm d}x.
\end{align}
By definition of  $\omega_+$, we observe that the first term on the right-hand side in \eqref{ine4} can be absorbed into the first term on the left-hand side of \eqref{ine3}. This leads to the following inequality
\begin{align*}
&\int_{B_\sigma}\eta^2\left[\frac{3}{4}\left(\left(\omega_++1\right)^\beta+\left(2\beta\right)^\beta\right)-\beta\left(\omega_++1\right)^{\beta-1}\right]{\bf{A}}(x)\nabla \left(\omega_++1\right)\nabla \left(\omega_++1\right){\rm d}x \\
\leq 	&\;8\int_{B_{\sigma}}\left(\omega_++1\right)^{\beta+1} {\bf{A}}(x)\nabla\eta\nabla\eta {\rm d}x+8\int_{B_\sigma}\left(2\beta\right)^\beta {\bf{A}}(x)\nabla \eta\nabla\eta {\rm d}x\\
&+\int_{B_\sigma}|\tilde{f}|\eta^2\left(\left(\omega_++1\right)^\beta+\left(2\beta\right)^\beta\right){\rm d}x.
\end{align*}
Owing to $\left(\omega_++1\right)^\beta+\left(2\beta\right)^\beta\geq 2\beta\left(\omega_++1\right)^{\beta-1}$, the previous inequality reduces
\begin{align}\label{r3}
&\int_{B_\sigma}\beta\eta^2\left(\omega_++1\right)^{\beta-1}{\bf{A}}(x)\nabla \left(\omega_++1\right)\nabla \left(\omega_++1\right){\rm d}x \nonumber\\
\leq& \; 16\int_{B_{\sigma}}\left(\omega_++1\right)^{\beta+1} {\bf{A}}(x)\nabla\eta\nabla\eta {\rm d}x
+16\;\int_{B_\sigma}\left(2\beta\right)^\beta {\bf{A}}(x)\nabla \eta\nabla\eta {\rm d}x\nonumber\\
&+2\int_{B_\sigma}|\tilde{f}|\eta^2\left(\left(\omega_++1\right)^\beta+\left(2\beta\right)^\beta\right){\rm d}x.
\end{align}
With the help of ({\rm H2}) and H\"older's inequality in \eqref{r3}, we obtain
\begin{align}\label{ine5}
\frac{4\beta}{\left(\beta+1\right)^2}\int_{B_\sigma}\lambda\left|\nabla\left(\omega_++1\right)^{\frac{\beta+1}{2}}\right|^2\eta^2 {\rm d}x
&\leq 16\int_{B_\sigma}\mu|\nabla\eta|^2\left(\omega_++1\right)^{\beta+1} {\rm d}x \notag\\
&\quad+ \left(2\beta\right)^\beta\left(16\|\mu\|_{L^p(B_\sigma)}\left|B_1\right|^\frac{1}{p^\prime}\frac{1}{(\sigma-\rho)^2}+2\left|B_1\right|^\frac{1}{s^\prime}\right)\notag\\
&\quad+2\int_{B_\sigma}|\tilde{f}|\eta^2\left(\omega_++1\right)^{\beta+1} {\rm d}x.
\end{align}
Hence, following the lines of proof of \eqref{dai1}-\eqref{dai3}, we conclude
\begin{align}\label{ine6}
\int_{B_\sigma}|\tilde{f}|\eta^2\left(\omega_++1\right)^{\beta} {\rm d}x
\leq &\left(\int_{B_\sigma}\left|\eta\left(\omega_++1\right)^\frac{\beta+1}{2}\right|^{2s^\prime} {\rm d}x\right)^\frac{1}{s^\prime}\notag\\
&\leq \varepsilon \left(\int_{B_\sigma}\left|\nabla\left(\left(\omega_++1\right)^\frac{\beta+1}{2}\eta\right)\right|^\frac{2q}{q+1}{\rm d}x\right)^\frac{q+1}{q}\notag\\
&\quad+\frac{1}{\varepsilon}\left(\int_{B_\sigma}\left|\left(\omega_++1\right)^\frac{\beta+1}{2}\eta\right|^{p^*}{\rm d}x\right)^\frac{2}{p^*}\notag\\
&\leq 2\varepsilon\|\lambda^{-1}\|_q\left[\int_{B_\sigma}\lambda\left|\nabla\left(\omega_++1\right)^\frac{\beta+1}{2}\right|^2\eta^2{\rm d}x\right.\notag\\
&\quad\left.+\int_{B_\sigma}\mu\left|\nabla\eta\right|^2\left(\omega_++1\right)^{\beta+1}{\rm d}x\right]\notag\\
&\quad+\frac{1}{\varepsilon}\left(\int_{B_\sigma}\left|\left(\omega_++1\right)^\frac{\beta+1}{2}\eta\right|^{p^*}{\rm d}x\right)^\frac{2}{p^*}.
\end{align}
By setting $\varepsilon=\frac{\beta}{2\left(\beta+1\right)^2\|\lambda^{-1}\|_{L^q(B_\sigma)}}$ and absorbing the first term on the right-hand side of \eqref{ine6} into the left-hand side of \eqref{ine5},   it is easy to calculate that
\begin{align}\label{omec}
\int_{B_\sigma}\lambda\left|\nabla \left(\omega_++1\right)^\frac{\beta+1}{2}\right|^2\eta^2 {\rm d}
&\leq \frac{16\left(\beta+1\right)^2}{\beta}\int_{B_\sigma}\mu|\nabla\eta|^2\left(\omega_++1\right)^{\beta+1}{\rm d}x\nonumber\\
&\quad +\frac{2\|\lambda^{-1}\|_q\left(\beta+1\right)^4}{\beta^2}\left(\int_{B_\sigma}\left|\left(\omega_++1\right)^\frac{\beta+1}{2}\eta\right|^{p^*}{\rm d}x\right)^\frac{2}{p^*}\nonumber\\
& \quad +32\left(\beta+1\right)^2\left(2\beta\right)^{\beta-1}\|\mu\|_{L^p(B_\sigma)}\frac{1}{\left(\sigma-\rho\right)^2}.
\end{align}
Furthermore, minimizing the right hand of \eqref{omec} for $\eta(|x|)\in C^\infty_0(B_\tau)$, with $\eta(|x|)\equiv1$ in $B_\rho$; $\eta(|x|)\equiv0$ in $B_\tau \setminus B_\sigma$, and using Lemma \ref{lem2.1}, we obtain
\begin{align}\label{tg}
\int_{B_\sigma}\lambda\left|\nabla \left(\omega_++1\right)^\frac{\beta+1}{2}\right|^2\eta^2{\rm d}x
&\leq \left[\frac{16\left( \beta+1\right)^2}{\beta\left(\sigma-\rho\right)^{\frac{2n}{n-1}}}\|\mu\|_{L^p(B_\sigma)}+2\|\lambda^{-1}\|_{L^q(B_\sigma)}\frac{\left(\beta+1\right)^4}{\beta^2}\right]\nonumber\\
&\quad\times\left\|\left(\omega_++1\right)^\frac{\beta+1}{2}\right\|^2_{W^{1,p^*}(B_\sigma)}\notag\\
&\quad+32\left(\beta+1\right)^2\left(2\beta\right)^{\beta-1}\|\mu\|_{L^p(B_\sigma)}\frac{1}{\left(\sigma-\rho\right)^2}.
\end{align}
Then, we replace $u$ by $\omega$ in inequalities \eqref{cdiedai}-\eqref{qia} and use \eqref{tg} to get
\begin{align}\label{ome2}
\left\|\left(\omega_++1\right)^{\frac{\beta+1}{2}\delta}\right\|_{W^{1,p^*}(B_\rho)}
&\leq C \Lambda(B_1)^\frac{1}{2}\left(\frac{\beta+1}{\left(\sigma-\rho\right)^\frac{n}{n-1}\sqrt{\beta}}+\frac{\left(\beta+1\right)^2}{\beta}\right)\left\|\left(\omega_++1\right)^\frac{\beta+1}{2}\right\|^\delta_{W^{1,p^*}(B_\sigma)}\nonumber\\
&\quad+\left(2\beta\right)^\frac{\beta-1}{2}\left(\beta+1\right)\Lambda(B_1)^\frac{1}{2}\frac{1}{\sigma-\rho}\left(\int_{B_\rho}\left|\left(\omega_++1\right)^\frac{\beta+1}{2}\right|^{p^*}{\rm d}x\right)^\frac{\delta-1}{p^*}.
\end{align}
Keeping Young's inequality in mind, we find that the second term on the right-hand side of \eqref{ome2} has the following estimate
\begin{align}\label{ome3}
\left\|\left(\omega_++1\right)^{\frac{\beta+1}{2}\delta}\right\|_{W^{1,p^*}(B_\rho)}&\leq C \Lambda(B_1)^\frac{\delta}{2\left(\delta-1\right)}\left(\frac{\beta+1}{\left(\sigma-\rho\right)^\frac{n}{n-1}\sqrt{\beta}}+\frac{\left(\beta+1\right)^2}{\beta}+\left(\sigma-\rho\right)^\frac{-\delta}{\delta-1}\right)\nonumber\\
&\quad\times\left\|\left(\omega_++1\right)^\frac{\beta+1}{2}\right\|^\delta_{W^{1,p^*}(B_\sigma)}
+\left[\left(2\beta\right)^\frac{\beta-1}{2}\left(\beta+1\right)\right]^\delta.
\end{align}
Set $\alpha=\frac{\beta+1}{2}\geq1$, then, \eqref{ome3} is rewritten as
\begin{align*}
\left\|\left(\omega_++1\right)^{\alpha\delta}\right\|^\frac{1}{\alpha\delta}_{W^{1,p^*}\left(B_\rho\right)}
&\leq C^\frac{1}{\alpha\delta}\Lambda(B_1)^\frac{1}{2\alpha\left(\delta-1\right)}\left(\frac{\alpha}{\left(\sigma-\rho\right)^\frac{n}{n-1}\sqrt{2\alpha-1}}+\frac{\alpha^2}{2\alpha-1}+\left(\sigma-\rho\right)^\frac{\delta}{\delta-1}\right)^\frac{1}{\delta\alpha}\\
&\quad\times\left\|\left(\omega_++1\right)^\alpha\right\|^\frac{1}{\alpha}_{W^{1,p^*}(B_\sigma)}+4\alpha.
\end{align*}
Therefore, we set $\alpha=\delta^m$,  $\rho_m=\frac{3\tau+\theta}{4}+\frac{1}{2^{m+2}}\left(\tau-\theta\right)$, $m \in \mathbb{N}$, and build up the following iterative process
\begin{align}\label{ine7}
\left\|\left(\omega_++1\right)^{\delta^{m+1}}\right\|^\frac{1}{\delta^{m+1}}_{W^{1,p^*}(B_{\rho_{m+1}})}
&\leq C^\frac{1}{\delta^{m+1}}\Lambda(B_1)^\frac{1}{2\left(\delta-1\right)\delta^m}\left(2^{\frac{\delta}{\delta-1}+\frac{n}{n-1}}\delta\right)^{\frac{m+1}{\delta^m}}\notag\\
&\quad\times\left\|\left(\omega_++1\right)^{\delta^m}\right\|^\frac{1}{\delta^m}_{W^{1,p^*}(B_{\rho_m})}+4\delta^m,
\end{align}
which implies
\begin{align}\label{did}
\left\|\left(\omega_++1\right)^{\delta^{m+1}}\right\|^\frac{1}{\delta^{m+1}}_{W^{1,p^*}(B_{\rho_{m+1}})}
&\leq C\Lambda(B_1)^\frac{\delta}{2\left(\delta-1\right)^2}\left(2^{\frac{\delta}{\delta-1}+\frac{n}{n-1}}\delta\right)^\frac{\delta}{\left(\delta-1\right)^2}\notag\\
&\quad\times\left(\left\|\omega_++1\right\|_{W^{1,p^*}(B_{\rho_0})}+\delta^m\right),
\end{align}
where $C$ depends on $n,~p,~q,~\tau,~\theta$.

Finally, recall that $\omega=-v$, together with   \eqref{ineb} and \eqref{inea}, and we get
\begin{align*}
\|\omega\|_{W^{1,p^*}(B_{\tau})}=\|v\|_{W^{1,p^*}(B_{\tau})}\leq C \|v\|_{W^{1,\frac{2q}{q+1}}(B_{\tau})}\leq C(\tau,~\Lambda(B_1),~p,~q, s).
\end{align*}
Then for any $i> p^*$, there exists $m_0\in \mathbb{N}$ such that $\delta^{m_0}p^*\leq i\leq \delta^{m_0+1}p^*$, \eqref{did} can be rewritten as
\begin{align*}
\left\|\omega_+\right\|_{L^i\left(B_\frac{3\tau+\theta}{4}\right)}\leq C_1\Lambda(B_1)^{C_2}i,
\end{align*}
where $C_1=C_1(\tau,p,q,s,n,\theta)$ and $C_2=C_2(n,p,q)$.
We further let $p_0=\left(2C_1\Lambda(B_1)^{C_2}e\right)^{-1}$  satisfying
\begin{align*}
\frac{p_0^i\|\omega_+\|^i_{L^i(B_\frac{3\tau+\theta}{4})}}{i!}\leq \frac{1}{2^i},
\end{align*}
thus we have
\begin{align*}
\left(\int_{B_{\frac{3\tau+\theta}{4}}}\left(\frac{u+h}{k}\right)^{p_0} {\rm d}x\right)^\frac{1}{p_0}
&\leq \left(\int_{B_\frac{3\tau+\theta}{4}}e^{p_0\omega_+}{\rm d}x +|B_1| \right)^\frac{1}{p_0}\\
&\leq \left(\sum_{i=0}^{\infty}\frac{p_0\|\omega_+\|_{L^i(B_\frac{3\tau+\theta}{4})}^i}{i!}+|B_1|\right)^\frac{1}{p_0}
\leq \left(2+|B_1|\right)^\frac{1}{p_0}.
\end{align*}

The proof of the inequality \eqref{ine2} is complete.

{\bf Step 3.} We claim that for any $\gamma\in [p_0,\frac{q_*}{2})$, there exists $C=C(p,q, \Lambda(B_1), \gamma,n)\geq 1$ such that
\begin{align}\label{5y}
\left(\int_{B_{\theta}}\left(u+h\right)^{\gamma}{\rm d}x\right)^\frac{1}{\gamma}\leq C\left(\int_{B_\frac{3\tau+\theta}{4}}\left(u+h\right)^{p_0}{\rm d}x\right)^\frac{1}{p_0},
\end{align}
where $C\leq C_3\Lambda\left(B_1\right)^{C_4\left(\frac{1}{p_0}-\frac{1}{\gamma}\right)} $ with $C_3=C_3(\gamma, n,p,q,\tau,\theta)\in[1,+\infty)$ and $C_4=C_4(\gamma,n,p,q)\in [1,+\infty)$.

To complete the proof of \eqref{5y}, we take  $\phi=\eta^2\left(u+h\right)^\beta$ with $\beta\in (-1, 0)$ in \eqref{def1}, and $0<\frac{\tau+3\theta}{4}\leq \rho<\sigma\leq \frac{\tau+\theta}{2}<1$, $\eta\equiv 1$ in $B_\rho$; $\eta_1 \equiv 0$ in $B_{\frac{\tau+\theta}{2}}\setminus B_\sigma$, we obtain
\begin{align*}
\beta\int_{B_\sigma}\left(u+h\right)^{\beta-1}\eta^2 {\bf{A}}(x)\nabla u\nabla\left(u+h\right){\rm d}x
+2\int_{B_\sigma}\eta\left(u+h\right)^\beta {\bf{A}}(x)\nabla u\nabla\eta {\rm d}x
\geq \int_{B_\sigma}f\eta^2\left(u+h\right)^\beta {\rm d}x,
\end{align*}
then by Young's inequality, we get
\begin{align}\label{ine10}
-\beta\int_{B_\sigma}\left(u+h\right)^{\beta-1}\eta^2 {\bf{A}}(x)\nabla u\nabla\left(u+h\right) {\rm d}x
&\leq -\int_{B_\sigma}f\eta^2\left(u+h\right)^\beta {\rm d}x\notag\\
&\quad+2\int_{B_\sigma}\eta\left(u+h\right)^\beta {\bf{A}}(x)\nabla u\nabla\eta {\rm d}x\notag\\
&\leq -\frac{\beta}{2}\int_{B_\sigma}\left(u+h\right)^{\beta-1}\eta^2 {\bf{A}}(x)\nabla u\nabla\left(u+h\right) {\rm d}x\notag\\
&\quad  -\int_{B_\sigma}f\eta^2\left(u+h\right)^\beta {\rm d}x\notag\\
&\quad-\frac{8}{\beta}\int_{B_\sigma}\mu\left|\nabla\eta\right|^2\left(u+h\right)^{\beta+1}{\rm d}x.
\end{align}
Absorbing the first term on the right-hand side into the left-hand side of \eqref{ine10}, we obtain
\begin{align*}
-\frac{4\beta}{\left(\beta+1\right)^2}\int_{B_\sigma}\lambda\left|\nabla \left(u+h\right)^\frac{\beta+1}{2}\right|^2 {\rm d}x
&\leq \int_{B_\sigma}|f|\eta^2\left(u+h\right)^\beta {\rm d}x\notag\\
&\quad-\frac{8}{\beta}\int_{B_\sigma}\mu\left|\nabla\eta\right|^2\left(u+h\right)^{\beta+1}{\rm d}x.
\end{align*}
Next, recalling \eqref{ine6} and replacing  $\omega_++1$ by $u+h$, the above inequality may be  rewritten as
\begin{align}\label{ine11}
\int_{B_\sigma}\lambda\left|\nabla \left(u+h\right)^\frac{\beta+1}{2}\right|^2 {\rm d}x
&\leq \frac{16\left(\beta+1\right)^2}{|\beta|}\int_{B_\sigma}\mu\left|\nabla\eta\right|^2\left(u+h\right)^{\beta+1}{\rm d}x\notag\\
&\quad+\frac{2\|\lambda^{-1}\|_q\left(\beta+1\right)^4}{\beta^2}\left(\int_{B_\sigma}\left(u+h\right)^\frac{\left(\beta+1\right)p_*}{2}{\rm d}x\right)^\frac{2}{p^*}.
\end{align}
In turn, keeping Lemma \ref{lem2.1} and H\"older's inequality in mind, we get
\begin{align}\label{zuih}
\left\|\nabla\left(u+h\right)^\frac{\beta+1}{2}\right\|_{L^\frac{2q}{q+1}\left(B_\rho\right)}
\leq C\left(\frac{\beta+1}{\sqrt{\beta}}+\frac{\left(\beta+1\right)^2}{|\beta|}\right)\frac{\Lambda(B_1)^\frac{1}{2}}{\left(\sigma-\rho\right)^\frac{n}{n-1}}\left\|\left(u+h\right)^
\frac{\beta+1}{2}\right\|_{W^{1,p_*}\left(B_\sigma\right)}.
\end{align}
Set $\alpha=\frac{\beta+1}{2}\in \left(0,\frac{1}{2}\right)$, then \eqref{dao}, \eqref{qia} and \eqref{zuih} imply
\begin{align*}
\left\|\left(u+h\right)^{\alpha\delta}\right\|_{W^{1,p_*}(B_\rho)}^{\frac{1}{\alpha\delta}}
&\leq \frac{\left(C\Lambda(B_1)^\frac{1}{2}\right)^\frac{1}{\alpha\delta}}{\left(\sigma-\rho\right)^\frac{n}{\left(n-1\right)\alpha\delta}}\left(\frac{2\alpha}{\sqrt{1-2\alpha}}+\frac{4\alpha^2}{1-2\alpha}\right)^\frac{1}{\alpha\delta}\left\|\left(u+h\right)^\alpha\right\|^\frac{1}{\alpha}_{W^{1,p_*}(B_\sigma)}.
\end{align*}
Taking $\alpha=\frac{\alpha_0}{\delta^m}$ where $\alpha_0\in\left(0,\frac{1}{2}\right)$, and
$\rho_m=\frac{\tau+\theta}{2}+\frac{1}{2^m}\frac{\left(\theta-\tau\right)}{4}$, we get
\begin{align*}
\left\|\left(u+h\right)^\frac{\alpha_0}{\delta^{m-1}}\right\|^\frac{\delta^{m-1}}{\alpha_0}_{W^{1,p^*}\left(B_{\rho_{m-1}}\right)}
&\leq \left(\frac{C\Lambda(B_1)^\frac{1}{2}}{\left(\tau-\theta\right)}\right)^\frac{\delta^{m-1}}{\alpha_0}\left(\frac{\alpha_0}{\delta^m\sqrt{1-\frac{2\alpha_0}{\delta^m}}}+\frac{4\alpha_0^2}{\delta^{2m}\left(1-\frac{2\alpha_0}{\delta^m}\right)}\right)^\frac{\delta^{m-1}}{\alpha_0}\\
&\quad\times 2^\frac{mn\delta^{m-1}}{\alpha_0\left(n-1\right)}
\left\|\left(u+h\right)^\frac{\alpha_0}{\delta^m}\right\|_{W^{1,p^*}\left(B_{\rho_m}\right)}^\frac{\delta^m}{\alpha_0}.
\end{align*}
Then by iteration, we obtain
\begin{align}\label{xidie}
\left\|\left(u+h\right)^{\alpha_0}\right\|^\frac{1}{\alpha_0}_{W^{1,p^*}(B_{\rho_0})}
&\leq  \left(\frac{C\Lambda(B_1)^\frac{1}{2}\alpha_0^2}{\left(\tau-\theta\right)\left(1-\frac{2\alpha_0}{\delta}\right)}\right)^\frac{\delta^m-1}{\left(\delta-1\right)\alpha_0}
\left(\frac{2^\frac{n}{n-1}}{\delta}\right)^{\frac{1}{\alpha_0}
	\left(\frac{\left(m+1\right)}{\delta-1}\delta^{m+1}-\frac{\delta^{m+1}-1}{\left(\delta-1\right)^2}\right)}\notag\\
&\quad\times\left\|\left(u+h\right)^\frac{\alpha_0}{\delta^m}\right\|^\frac{\delta^m}{\alpha_0}_{W^{1,p_*}\left(B_{\rho_m}\right)}.
\end{align}
Subsequently, we fix $m\in \mathbb{N}$ satisfying $\frac{2p\alpha_0 }{\delta^m\left(p-1\right)}\leq p_0\leq \frac{2p\alpha_0}{\delta^{m-1}\left(p-1\right)}$, and apply H\"older's inequality to \eqref{ine11} to estimate the right-hand of \eqref{xidie} as follows
\begin{align*}
\left\|\nabla \left(u+h\right)^\frac{\alpha_0}{\delta^m}\right\|_{L^{p_*}\left(B_{\rho_m}\right)}
&\leq C \left\|\nabla \left(u+h\right)^\frac{\alpha_0}{\delta^m}\right\|_{L^{\frac{2q}{q+1}}\left(B_{\frac{\tau+\theta}{2}}\right)}\\
&\leq C_5\left\|\left(u+h\right)^\frac{\alpha_0}{\delta^m}\right\|_{L^\frac{2p}{p-1}\left(B_\frac{3\tau+\theta}{4}\right)}\leq C_6\left\|u+h\right\|^\frac{\alpha_0}{\delta^m}_{L^{p_0}\left(\frac{3\tau+\theta}{4}\right)},
\end{align*}
where $C_5,C_6$ depend on $\tau,\theta,\alpha_0,\Lambda(B_1),n, p,q,s$. Further, inserting the above inequality into \eqref{xidie}, one has
\begin{align}\label{ine12}
\left\|\left(u+h\right)^{\alpha_0}\right\|^\frac{1}{\alpha_0}_{W^{1,p^*}(B_{\rho_0})}
\leq C\Lambda(B_1)^{\frac{p\delta}{\left(p-1\right)p_0\left(\delta-1\right)}-\frac{1}{2\alpha_0\left(\delta-1\right)}}\left\| u+h\right\|_{L^{p_0}\left(B_\frac{3\tau+\theta}{4}\right)}.
\end{align}

Once again, by Sobolev inequality,  \eqref{ine11} and \eqref{zuih}, we find
\begin{align}\label{ine13}
\left\|\left(u+h\right)^{\alpha_0}\right\|_{L^{q_*}\left(B_\theta\right)}
\leq C\left\|\left(u+h\right)^{\alpha_0}\right\|_{W^{1,\frac{2q}{q+1}}\left(B_\theta\right)}
\leq  C\left\|\left(u+h\right)^{\alpha_0}\right\|_{W^{1,p_*}\left(B_\frac{\tau+3\theta}{4}\right)},
\end{align}
where $C=C(\tau,\theta,n,\Lambda(B_1),q,p,\alpha_0)$.
Then a combination of \eqref{ine12} and \eqref{ine13} yields the desired	claim \eqref{5y}.

This completes the proof of Lemma \ref{THM104}.

\section{Proof of Example \ref{e1} -- \ref{e3}}
In this section we first construct Example \ref{e1} to demonstrate that $s=\frac{nq}{2q-n}$ is optimal. Our example is intuitively straightforward.

{\bf Proof of Example \ref{e1}. }
Let $\Omega$ be the ball $B=B_\frac{1}{4}(0)$. Define
\begin{align*}
{\bf{A}}(x)=
\begin{pmatrix}
a_1(x) &0&\cdots&0\\
0 & a_1(x)&\cdots&0\\
\vdots & \vdots&\ddots&\vdots\\
0&0&\cdots&a_1(x)
\end{pmatrix},~a_1(x)=|x|^\beta\left(\log\frac{1}{|x|}\right)^\theta,
\end{align*}
and
\begin{align*}
f(x)&=(n+1)|x|^{\beta-1}\left(\log\frac{1}{|x|}\right)^\theta\int_{B_\frac{1}{2}(0)}|y|^{1-n}\left(|x|+|y|\right)^{-2}\frac{1}{\log\frac{1}{|y|}}{\rm d}y\\
&\quad-\theta|x|^{\beta-1}\left(\log\frac{1}{|x|}\right)^{\theta-1}\int_{B_\frac{1}{2}(0)}|y|^{1-n}\left(|x|+|y|\right)^{-2}\frac{1}{\log\frac{1}{|y|}}{\rm d}y\\
& \quad-2|x|^\beta\left(\log\frac{1}{|x|}\right)^\theta\int_{B_\frac{1}{2}(0)} |y|^{1-n}\left(|x|+|y|\right)^{-3}\frac{1}{\log\frac{1}{|y|}}{\rm d}y,
\end{align*}
where $\beta=\frac{n}{q}<2$, $\theta=\frac{1}{2}+\frac{1}{q}-\frac{1}{n}$.

It is easy to observe that
\begin{align*}
u(x)=\int_{B_\frac{1}{2}(0)}|y|^{1-n}\left(|x|+|y|\right)^{-1}\frac{1}{\log\frac{1}{|y|}}{\rm d}y
\end{align*}
is the weak solution of \eqref{Equ01} and $u(0)=+\infty$. To make the argument rigorous, we exploit some technique from [\ref{CUR}] to show that
\begin{align*}
\int_{\Omega} {\bf{A}}(x)\nabla u\nabla \varphi {\rm d}x=\int_{\Omega} f\varphi {\rm d}x, ~~\forall \varphi\geq0\in C^\infty_0(\Omega).
\end{align*}
For each $k\geq 2$, let $\chi_k$ be a $C^\infty$, non-negative and radial function satisfying
\begin{align*}
\chi_k(x) =
\begin{cases}
0, &|x|\leq 2^{-k-2},\\
1, &2^{-k-1}\leq |x|<\frac{1}{4}.
\end{cases}
\end{align*}
Define $f_k=\chi_k f$. Notice that each $f_k$ is continuous, and $$u_k=\chi_k\int_{B_\frac{1}{2}(0)}|y|^{1-n}\left(|x|+|y|\right)^{-1}\frac{1}{\log\frac{1}{|y|}}{\rm d}y\in C^2(\Omega)$$ 
is the solution of $-\dive\left({\bf{A}}(x)\nabla u_k\right)=f_k$, in the weak sense,
\begin{align*}
\int_{\Omega}{\bf{A}}(x)\nabla u_k\nabla \varphi {\rm d}x=\int_{\Omega}f_k \varphi {\rm d}x,~~\forall \varphi \geq0\in C^\infty_0(\Omega).
\end{align*}
The strong convergence $f_k\rightarrow f$ in $L^s(\Omega)$ implies
$$\int_{\Omega}{\bf{A}}(x)\nabla u_k\nabla \varphi {\rm d}x\rightarrow \int_{\Omega}{\bf{A}}(x)\nabla u\nabla\varphi {\rm d}x,~~k\rightarrow \infty.$$

Next, we will verify that $\lambda^{-1}(x)\in L^q(\Omega)$.
Indeed, by using a polar coordinate transformation, we obtain
\begin{align*}
\int_{B_\frac{1}{2}(0)}\left|\frac{1}{\lambda(x)}\right|^q{\rm d}x
=&\;\alpha_n\int_{0}^\frac{1}{2}\left|\frac{1}{r^\beta\left(\log\frac{1}{r}\right)^\theta}\right|^q r^{n-1} {\rm d}r
=\alpha_n\int_{0}^{\frac{1}{2}}\frac{1}{\left(\log\frac{1}{r}\right)^{\theta q}}r^{n-1-\beta q} {\rm d}r\\
=&\;\alpha_n\int_{2}^\infty\frac{1}{\left(\log t\right)^{\theta q}}\left(\frac{1}{t}\right)^{n+1-\beta q} {\rm d}t
=\alpha_n\int_{2}^{\infty}\frac{1}{\left(\log t\right)^{\theta q}}\frac{1}{t} {\rm d}t
<+\infty,
\end{align*}
where $\alpha_n$ is the surface area of an $n$-dimensional sphere.

To complete the proof of Example \ref{e1}, we will demonstrate that $f\in L^\frac{nq}{2q-n}(\Omega)$ when $n\geq 3$. It is sufficient to demonstrate one term of $f$ that belongs to $L^\frac{nq}{2q-n}(\Omega)$, the remaining terms can be handled in a similar manner.
For example, we will show that
\begin{align*}
|x|^\beta\left(\log\frac{1}{|x|}\right)^\theta\int_{B_\frac{1}{2}(0)}|y|^{1-n}\left(|x|+|y|\right)^{-3}\frac{1}{\log\frac{1}{|y|}}{\rm d}y \in L^\frac{nq}{2q-n}\left(B_\frac{1}{4}(0)\right).
\end{align*}
In fact, we know
\begin{align}\label{1t}
&\int_{B_\frac{1}{4}(0)}\left||x|^\beta\left(\log\frac{1}{|x|}\right)^\theta\int_{B_\frac{1}{2}(0)}|y|^{1-n}\left(|x|+|y|\right)^{-3}\frac{1}{\log\frac{1}{|y|}}{\rm d}y\right|^\frac{nq}{2q-n} {\rm d}x\notag\\
=&\int_{B_\frac{1}{4}(0)}|x|^\frac{\beta nq}{2q-n}\left(\log\frac{1}{|x|}\right)^\frac{\theta nq}{2q-n}\left|\int_{B_\frac{1}{2}(0)}|y|^{1-n}\left(|x|+|y|\right)^{-3}\frac{1}{\log\frac{1}{|y|}}{\rm d}y\right|^\frac{nq}{2q-n} {\rm d}x.
\end{align}
Then, we will estimate
\begin{align}\label{in1}
\int_{B_\frac{1}{2}(0)}|y|^{1-n}\left(|x|+|y|\right)^{-3}\frac{1}{\log\frac{1}{|y|}}{\rm d}y.
\end{align}
We decompose \eqref{in1} into two terms
\begin{align*}
\int_{B_\frac{1}{2}(0)}|y|^{1-n}\left(|x|+|y|\right)^{-3}\frac{1}{\log\frac{1}{|y|}}{\rm d}y
&=\int_{B_\frac{1}{2}(0)\cap\{|y|<|x|^\frac{1}{2}\}}|y|^{1-n}\left(|x|+|y|\right)^{-3}\frac{1}{\log\frac{1}{|y|}}{\rm d}y\\
&\quad+\int_{B_\frac{1}{2}(0)\cap\{|y|\geq |x|^\frac{1}{2}\}}|y|^{1-n}\left(|x|+|y|\right)^{-3}\frac{1}{\log\frac{1}{|y|}}{\rm d}y\\
&=I_1+I_2.
\end{align*}
By using a polar coordinate transformation, we find  $I_1$ may be estimated as follows
\begin{align}\label{I1}
I_1=&\int_{B_\frac{1}{2}(0)\cap\{|y|<|x|^\frac{1}{2}\}}|y|^{1-n}\left(|x|+|y|\right)^{-3}\frac{1}{\log\frac{1}{|y|}}{\rm d}y
\leq\alpha_n\int_{0}^{|x|^\frac{1}{2}}\left(|x|+r\right)^{-3}\frac{1}{\log\frac{1}{r}}{\rm d}r\notag\\
\leq& \;\frac{2\alpha_n}{ \log\frac{1}{|x|}}\int_{0}^{|x|^\frac{1}{2}}\left(|x|+r\right)^{-3}{\rm d}r
=\frac{\alpha_n}{ \log\frac{1}{|x|}}\left(|x|^{-2}-\left(|x|^\frac{1}{2}+|x|\right)^{-2}\right)
\leq\frac{\alpha_n}{ \log\frac{1}{|x|}}|x|^{-2}.
\end{align}
Similarly, for $I_2$, we also estimate 
\begin{align}\label{I2}
I_2\leq&\;\alpha_n\int_{|x|^\frac{1}{2}}^\frac{1}{2}\left(|x|+r\right)^{-3}\frac{1}{\log\frac{1}{r}}{\rm d}r
\leq\frac{\alpha_n}{\log 2}\left(|x|+|x|^\frac{1}{2}\right)^{-3}\int_{|x|^\frac{1}{2}}^\frac{1}{2}{\rm d}r\notag\\
=& \;\frac{\alpha_n}{\log 2}\left(|x|+|x|^\frac{1}{2}\right)^{-3}\left(\frac{1}{2}-|x|^\frac{1}{2}\right)
\leq\frac{\alpha_n}{\log 2}|x|^{-\frac{3}{2}}\left(\frac{1}{2}-|x|^\frac{1}{2}\right)
=\frac{\alpha_n}{2\log 2}|x|^{-\frac{3}{2}}.
\end{align}
By using \eqref{in1}-\eqref{I2}, \eqref{1t} has the following estimate
\begin{align*}
&\int_{B_\frac{1}{4}(0)}|x|^\frac{\beta nq}{2q-n}\left(\log\frac{1}{|x|}\right)^\frac{\theta nq}{2q-n}\left|\int_{B_\frac{1}{2}(0)}|y|^{1-n}\left(|x|+|y|\right)^{-3}\frac{1}{\log\frac{1}{|y|}}{\rm d}y\right|^\frac{nq}{2q-n} {\rm d}x\\
\leq& \;c\int_{B_\frac{1}{4}(0)}|x|^\frac{\beta nq}{2q-n}\left(\log\frac{1}{|x|}\right)^\frac{\theta nq}{2q-n} \left(\frac{1}{ \log\frac{1}{|x|}}|x|^{-2}\right)^\frac{nq}{2q-n}{\rm d}x\\
& +c\int_{B_\frac{1}{4}(0)}|x|^\frac{\beta nq}{2q-n}\left(\log\frac{1}{|x|}\right)^\frac{\theta nq}{2q-n} \left(|x|^{-\frac{3}{2}}\right)^\frac{nq}{2q-n}{\rm d}x\\
=&\;c\int_{0}^\frac{1}{4}r^{\frac{(\beta-2) nq}{2q-n}+n-1}\left(\log\frac{1}{r}\right)^{\frac{nq(\theta-1)}{2q-n}} {\rm d}r
+c\int_{0}^\frac{1}{4}r^{\frac{(\beta-\frac{3}{2}) nq}{2q-n}+n-1}\left(\log\frac{1}{r}\right)^{\frac{nq\theta}{2q-n}} {\rm d}r\\
=&\;I_3+I_4,
\end{align*}
where $c$ only depends on $n,~q$. Now, we estimate $I_3$. Notice
\begin{align*}
&\frac{(\beta-2) nq}{2q-n}+n-1=\frac{(\frac{n}{q}-2) nq}{2q-n}+n-1=-1,\\
&\frac{nq(\theta-1)}{2q-n}=\frac{n-q-2nq}{2(2q-n)}<-1,
\end{align*}
then, a simple computation shows $I_3<+\infty.$

Similarly, due to
\begin{align*}
\frac{(\beta-\frac{3}{2}) nq}{2q-n}+n-1=\frac{nq}{q-\frac{n}{2}}-1>-1,
\end{align*}
it is not hard to prove that
\begin{align*}
I_4<+\infty.
\end{align*}
Thus we arrive at
\begin{align*}
\int_{B_\frac{1}{4}(0)}\left||x|^\beta\left(\log\frac{1}{|x|}\right)^\theta\int_{B_\frac{1}{2}(0)}|y|^{1-n}\left(|x|+|y|\right)^{-3}\frac{1}{\log\frac{1}{|y|}}{\rm d}y\right|^\frac{nq}{2q-n} {\rm d}x<+\infty.
\end{align*}
For the case $n=2$, by a similar argument, we could  also verify $f\in L^s(B_\frac{1}{4}(0))$, for $s\in[1,\frac{q}{q-1})$.

{\bf Proof of Example \ref{e2}. } We take $\beta=2$ in the proof of Example \ref{e1}, and we have
\begin{align*}
a_1(x)=|x|^2\left(\log\frac{1}{|x|}\right)^\theta,
\end{align*}
\begin{align*}
f(x)&=(n+1)|x|\left(\log\frac{1}{|x|}\right)^\theta\int_{B_\frac{1}{2}(0)}|y|^{1-n}\left(|x|+|y|\right)^{-2}\frac{1}{\log\frac{1}{|y|}}{\rm d}y\\
&\quad+\theta|x|\left(\log\frac{1}{|x|}\right)^{\theta-1}\int_{B_\frac{1}{2}(0)}|y|^{1-n}\left(|x|+|y|\right)^{-2}\frac{1}{\log\frac{1}{|y|}}{\rm d}y\\
&\quad-2|x|^2\left(\log\frac{1}{|x|}\right)^\theta\int_{B_\frac{1}{2}(0)} |y|^{1-n}\left(|x|+|y|\right)^{-3}\frac{1}{\log\frac{1}{|y|}}{\rm d}y,
\end{align*}
where $\theta=\frac{5}{2n}$.
We observe
\begin{align*}
u(x)=\int_{B_\frac{1}{2}(0)} |y|^{1-n} \left(|x|+|y|\right)^{-1}\frac{1}{\log\frac{1}{|y|}}{\rm d}y
\end{align*}
is the weak solution of \eqref{Equ01}, and $u(0)=\infty$. The rest of procedure is similar to Example \ref{e1}. Finally, we find $f\in L^\infty(B_\frac{1}{4}(0))$ when $n\geq 3$ and $f\in L^s(B_\frac{1}{4}(0))$, $s\in [1,+\infty)$ when $n=2$.

{\bf Proof of Example \ref{e3}. } We set $\beta=\frac{n}{q}\in \left(2, \frac{2n}{n-1}\right),~\theta=1$ in Example \ref{e1}. Let
\begin{align*}
a_1(x)=|x|^\frac{n}{q}\left(\log \frac{1}{|x|}\right)^\frac{2}{q},
\end{align*}
\begin{align*}
f(x)&=(n+1)|x|^{\frac{n}{q}-1}\left(\log\frac{1}{|x|}\right)\int_{B_\frac{1}{2}(0)}|y|^{1-n}\left(|x|+|y|\right)^{-2}\frac{1}{\log\frac{1}{|y|}}{\rm d}y\\
&\quad-|x|^{\frac{n}{q}-1}\int_{B_\frac{1}{2}(0)}|y|^{1-n}\left(|x|+|y|\right)^{-2}\frac{1}{\log\frac{1}{|y|}}{\rm d}y\\
&\quad-2|x|^\frac{n}{q}\left(\log\frac{1}{|x|}\right)\int_{B_\frac{1}{2}(0)} |y|^{1-n}\left(|x|+|y|\right)^{-3}\frac{1}{\log\frac{1}{|y|}}{\rm d}y,
\end{align*}
and $f\in L^\infty(B_\frac{1}{4}(0))$. Then we take $$u(x)=\int_{B_\frac{1}{2}(0)} |y|^{1-n} \left(|x|+|y|\right)^{-1}\frac{1}{\log\frac{1}{|y|}}{\rm d}y,$$ by a similar argument as above, we could verify $u$ is the weak solution of \eqref{Equ01}.
\begin{remark}\label{bor}
	\rm It's interesting to see that Example \ref{e3} could be extended directly to the case of $\frac{1}{p}+\frac{1}{q}=\frac{2}{n-1},~n\geq3$. Indeed, when we take $q=\frac{2}{n-1},~p=+\infty$, we find that there exists $f\in L^\infty(\Omega)$, which makes the weak solution $u$ unbounded.
\end{remark}



\begin{thebibliography}{stringX}
	\leftskip=-10mm
	\parskip=-1mm
	\small
	\bibitem{BPMS2}\label{BPMS2}
	P. Bella, M. Sch\"affner, On the regularity of minimizers for scalar integral functionals with (p,q)-growth, Anal. PDE 13 (7) (2020) 2241–2257. %
	\bibitem{BPMS3}\label{BPMS3}
	P. Bella, M. Sch\"affner, Local boundedness and Harnack inequality for solutions of linear nonuniformly elliptic equations, Comm. Pure Appl. Math. 74 (3) (2021) 453–477.
	\bibitem{BPMS1}\label{BPMS1}
	P. Bella, M. Sch\"affner, Local boundedness for $p$-Laplacian with degenerate coefficients, Math. Eng.  5 (5) (2023) 1-20.
	\bibitem{CSL}\label{CSL}
S. Chanillo, Y.Y. Li, Continuity of solutions of uniformly elliptic equations in $\mathbb{R}^2$, Manuscr. Math. 77 (4) (1992) 415–433.
\bibitem{CUR}\label{CSWR}
S. Chanillo, R.L. Wheeden, Harnack's inequality and mean-value inequalities for solutions of degenerate elliptic equations, Comm. Partial Differential Equations 11 (10) (1986) 1111–1134.
\bibitem{CA1}\label{CA1}
 A. Cianchi, Maximizing the $L^\infty$ norm of the gradient of solutions to the Poisson equation, J. Geom. Anal. 2 (6) (1992)  499–515.
\bibitem{CA}\label{CA}
A. Cianchi, Strong and weak type inequalities for some classical operators in Orlicz spaces, J. Lond. Math. Soc. (2) 60 (1) (1999) 187–202.
	\bibitem{CUR}\label{CUR}
	D. Cruz-Uribe, S. Rodney, Bounded weak solutions to elliptic PDE with data in Orlicz spaces, J. Differential Equations 297 (2021) 409–432.%
	\bibitem{CGMM}\label{CGMM}
	G. Cupini, P. Marcellini, E. Mascolo, Local boundedness of weak solutions to elliptic equations with p,q-growth, Math. Eng. 5 (3) (2023) Paper No. 065, 28 pp.%
	\bibitem{DG}\label{DG}
	E. De Giorgi, Sulla differenziabilit${\rm\grave{a}}$ e l'analiticit${\rm\grave{a}}$ delle estremali degli integrali multipli regolari, Mem. Accad. Sci. Torino. Cl. Sci. Fis. Mat. Nat. 3 (3) (1957) 25–43.
\bibitem{DG1}\label{DG1}
 E. De Giorgi, Congetture sulla continuit\`{a} delle soluzioni di equazioni lineari ellittiche
autoaggiunte a coefficienti illimitati, Unpublished, 1995.
	\bibitem{FKS}\label{FKS}
	E.B. Fabes, C.E. Kenig, R.P. Serapioni, The local regularity of solutions of degenerate elliptic equations, Comm. Partial Differential Equations 7 (1) (1982) 77–116. %
	\bibitem{FSS}\label{FBG}
	B. Franchi, C.E. Gutiérrez, R.L. Wheeden, Two-weight Sobolev-Poincaré inequalities and Harnack inequality for a class of degenerate elliptic operators, Atti Accad. Naz. Lincei Cl. Sci. Fis. Mat. Natur. Rend. Lincei (9) Mat. Appl. 5 (2) (1994) 167–175.
	\bibitem{FSS}\label{FSS}
	B. Franchi, R. Serapioni, F. Serra Cassano, Irregular solutions of linear degenerate elliptic equations, Potential Anal. 9  (3) (1998) 201–216.
	\bibitem{GT}\label{GT}
	D. Gilbarg, N.S. Trudinger, Elliptic partial differential equations of second order, Second edition. Grundlehren der mathematischen Wissenschaften [Fundamental Principles of Mathematical Sciences], 224. Springer-Verlag, Berlin, 1983. %
	\bibitem{HL}\label{HL}
	Q. Han, F.H. Lin, Elliptic partial differential equations, Courant Lecture Notes in Mathematics, vol. 1, New York University, Courant Institute of Mathematical Sciences; American Mathematical Society, New York; Providence, RI, 1997.  %
	\bibitem{HS}\label{HS}
	J. Hirsch, M. Sch\"affner, Growth conditions and regularity, an optimal local boundedness result, Commun. Contemp. Math. 23 (3) (2021) Paper No. 2050029, 17 pp. %
	\bibitem{LU}\label{LU}
	O.A. Lady${\rm \check{z}}$henskaya, N.N. Ural'tseva, Linear and quasilinear elliptic equations, Translated from the Russian by Scripta Technica, Inc. Translation editor: Leon Ehrenpreis Academic Press, New York-London 1968.%
	\bibitem{MA1}\label{MA1}
	V.G. Maz'ja, Some estimates of solutions of second-order elliptic equations,  Dokl. Akad. Nauk SSSR 137 (1961) 1057–1059.%
	\bibitem{MA2}\label{MA2}
	V.G. Maz'ja, Weak solutions of the Dirichlet and Neumann problems, Trudy Moskov. Mat. Ob${\rm \check{s}\check{z}}$. 20 (1969) 137–172.
     \bibitem{MCB}\label{MCB}
	C.B. Morrey, On the solutions of quasi-slinear elliptic partial differential equations, Trans. Amer. Math. Soc. 43 (1) (1938) 126–166.
	\bibitem{MJ}\label{MJ}
	J. Moser, A new proof of De Giorgi's theorem concerning the regularity problem for elliptic differential equations, Comm. Pure Appl. Math. 13 (1960) 457–468.
	\bibitem{MJ1}\label{MJ1}
	J. Moser, On Harnack's theorem for elliptic differential equations, Comm. Pure Appl. Math. 14 (1961) 577–591.
	\bibitem{MS}\label{MS}
	M.K.V. Murthy, G. Stampacchia, Boundary value problems for some degenerate-elliptic operators, Ann. Mat. Pura Appl. 80 (4) (1968) 1–122.
	\bibitem{NJ}\label{NJ}
	J. Nash, Continuity of solutions of parabolic and elliptic equations, Amer. J. Math. 80 (1958) 931–954.
	\bibitem{PS}\label{PS}
	L.C. Piccinini, S. Spagnolo, On the Hölder continuity of solutions of second order elliptic equations in two variables, Ann. Scuola Norm. Sup. Pisa Cl. Sci. 26 (3) (1972) 391–402.
	\bibitem{SG}\label{SG}
	G. Stampacchia, Le probl${\rm\grave{e}}$me de Dirichlet pour les ${\rm\acute{e}}$quations elliptiques du second ordre ${\rm\grave{a}}$ coefficients discontinus, (French) Ann. Inst. Fourier (Grenoble) 15 (1965) 189–258.%
	\bibitem{SG}\label{TG}
	G. Talenti, Elliptic equations and rearrangements, Ann. Scuola Norm. Sup. Pisa Cl. Sci. 3 (4) (1976) 697–718.
\bibitem{TEVUJ}\label{TEVUJ}	
E.V. Teixeira, J.M. Urbano, Geometric tangential analysis and sharp regularity for degenerate pdes, in Harnack inequalities and nonlinear operators, Springer INdAM Ser. Springer, cham 46 (2021) 175–192.
	\bibitem{TRU1}\label{TRU1}
	N.S. Trudinger, On the regularity of generalized solutions of linear, non-uniformly elliptic equations, Arch. Rational Mech. Anal. 42 (1971) 50–62.
	\bibitem{TRU2}\label{TRU2}
	N.S. Trudinger, Linear elliptic operators with measurable coefficients, Ann. Scuola Norm. Sup. Pisa Cl. Sci. 27 (3) (1973) 265–308.
	\bibitem{WK}\label{WK}
	K.O. Widman, On the H\"older continuity of solutions of elliptic partial differential equations in two variables with coefficients in $L^\infty$, Comm. Pure Appl. Math. 22 (1969) 669–682.
	\bibitem{XU}\label{XU}
	X.S. Xu, Logarithmic up bounds for solutions of elliptic partial differential equations, Proc. Amer. Math. Soc. 139 (10) (2011) 3485–3490.
\bibitem{ZX}\label{ZX}
	 X. Zhong, Discontinuous solutions of linear, degenerate elliptic equations, J. Math. Pures Appl. 90 (9) (2008) 31–41.
	
\end{thebibliography}
\end{document}